\newtheorem{thm}{Theorem}[section]
\newtheorem{lem}[thm]{Lemma}
\newtheorem{prop}[thm]{Proposition}
\newtheorem{cor}[thm]{Corollary}
\newtheorem{por}[thm]{Porism}
\theoremstyle{definition}
\newtheorem{rem}[thm]{Remark}
\newtheorem{defn}[thm]{Definition}
\numberwithin{equation}{section}
\crefname{thm}{theorem}{theorems}
\crefname{rem}{remark}{remarks}
\crefname{prop}{proposition}{propositions}
\crefname{lem}{lemma}{lemmas}
\crefname{por}{porism}{porisms}
\crefname{identity}{identity}{identities}
\crefname{equation}{}{}
\DeclareMathOperator{\GL}{GL}
\DeclareMathOperator{\PGL}{PGL}
\DeclareMathOperator{\SL}{SL}
\DeclareMathOperator{\PSL}{PSL}
\DeclareMathOperator{\OO}{O}
\DeclareMathOperator{\SO}{SO}
\DeclareMathOperator{\Sp}{Sp}
\DeclareMathOperator{\MM}{M}
\DeclareMathOperator{\diag}{diag}
\DeclareMathOperator{\Sym}{Sym}
\DeclareMathOperator{\chr}{char}
\DeclareMathOperator{\Hilb}{Hilb}
\DeclareMathOperator{\Jac}{Jac}
\DeclareMathOperator{\id}{id}
\DeclareMathOperator{\rank}{rank}
\DeclareMathOperator{\codim}{codim}
\DeclareMathOperator{\Cl}{Cl}
\DeclareMathOperator{\Hom}{Hom}
\DeclareMathOperator{\Frac}{Frac}
\newcommand{\md}[1]{{\left\lvert #1 \right\lvert}}
\newcommand{\mdd}[1]{{\lvert #1 \lvert}}
\newcommand{\deff}[1]{{\color{Turquoise}#1}}
\newcommand{\andd}{\quad\text{and}\quad}
\newcommand{\into}{\longhookrightarrow}
\DeclareRobustCommand{\onto}{\relbar\joinrel\twoheadrightarrow}
\newcommand{\tr}{\operatorname{tr}}
\newcommand{\trace}{\operatorname{trace}}
\newcommand{\sym}{W}
\newcommand{\smatrix}[1]{\left(\begin{smallmatrix} #1 \end{smallmatrix}\right)}
\renewcommand{\sl}{\mathfrak{sl}_{2}}
\newcommand{\gl}{\mathfrak{gl}_{2}}
\newcommand{\oo}{\mathfrak{o}_{2}}
\preto\env@matrix{\renewcommand{\arraystretch}{1}}
\newcommand{\generic}{\left[\begin{smallmatrix} a & b \\ c & d \end{smallmatrix}\right]}
\newcommand{\traceless}{\left[\begin{smallmatrix} a & b \\ c & -a \end{smallmatrix}\right]}
\newcommand{\symmetric}{\left[\begin{smallmatrix} a & b \\ b & d \end{smallmatrix}\right]}
\newcommand{\bgeneric}{\begin{bmatrix} a & b \\ c & d \end{bmatrix}}
\newcommand{\btraceless}{\begin{bmatrix} a & b \\ c & -a \end{bmatrix}}
\newcommand{\bsymmetric}{\begin{bmatrix} a & b \\ b & d \end{bmatrix}}
\newcommand{\balternating}{\begin{bmatrix} 0 & b \\ -b & 0 \end{bmatrix}}
\newcommand{\mysetminusD}{\hbox{\tikz{\draw[line width=0.6pt,line cap=round] (3pt,0) -- (0,6pt);}}}
\newcommand{\mysetminusT}{\mysetminusD}
\newcommand{\mysetminusS}{\hbox{\tikz{\draw[line width=0.45pt,line cap=round] (2pt,0) -- (0,4pt);}}}
\newcommand{\mysetminusSS}{\hbox{\tikz{\draw[line width=0.4pt,line cap=round] (1.5pt,0) -- (0,3pt);}}}
\newcommand{\mysetminus}{\mathbin{\mathchoice{\mysetminusD}{\mysetminusT}{\mysetminusS}{\mysetminusSS}}}
\let\subset\subseteq
\let\ge\geqslant
\let\le\leqslant
\let\mapsto\longmapsto
\let\to\longrightarrow
\begin{document}

\title[Polynomial invariants of conjugation over finite fields]{Polynomial invariants of classical subgroups of \texorpdfstring{$\GL_{2}$}{GL(2)}: \\ Conjugation over finite fields}

\author{Aryaman Maithani}
\address{Department of Mathematics, University of Utah, 155 South 1400 East, Salt Lake City, UT~84112, USA}
\email{maithani@math.utah.edu}

\thanks{The author was supported by NSF grants DMS~2101671 and DMS~2349623.}

\subjclass[2020]{13A50}

\keywords{Conjugation action, adjoint action, polynomial invariants, finite fields, classical groups}

\begin{abstract} 
	Consider the conjugation action of the general linear group $\GL_{2}(K)$ on the polynomial ring $K[X_{2 \times 2}]$. 
	When $K$ is an infinite field, the ring of invariants is a polynomial ring generated by the trace and the determinant. 
	We describe the ring of invariants when $K$ is a finite field, and show that it is a hypersurface. 
	We also consider the other classical subgroups, and the polynomial rings corresponding to other subspaces of matrices such as the traceless and symmetric matrices. 
	In each case, we show that the invariant ring is either a polynomial ring or a hypersurface. 
\end{abstract}

\maketitle

\section{Introduction} \label{sec:introduction}

	Let $K$ be a field, and consider the \emph{conjugation action} of the general linear group $G = \GL_{2}(K)$ on the polynomial ring 
	$S = K[X_{2 \times 2}] = K[x_{11}, x_{12}, x_{21}, x_{22}]$: 
	if $X$ denotes the square matrix of variables, then the element $\sigma \in G$ acts by mapping $x_{ij}$ to the $(i, j)$-th entry of $\sigma^{-1} X \sigma$.
	When $K$ is infinite, the ring of invariants is generated by the trace and determinant, i.e., $S^{G} = K[\trace X, \det X]$. 
	More generally, one may consider the conjugation action of $\GL_{n}(K)$ on $K[X_{n \times n}]$. 
	When the field $K$ is infinite, $K[X_{n \times n}]^{\GL_{n}(K)}$ is classically known to be generated by the coefficients of the characteristic polynomial of $X$, see for example \Cite[Example 2.1.3]{DerksenKemper}.
	It is easy to see that this can not hold when the field $K$ is finite: 
	indeed, $G$ is then a finite group, and thus the Krull dimensions	of $S$ and $S^{G}$ must be the same, telling us that the invariant subring is larger. 
	The first part of the paper concerns the following description of $S^{G}$ when $K$ is finite, expanding on the work of Larry Smith~\Cite{Smith:Conjugation}. 

	\begin{thm} \label{mainthm}
		Let $K$ be a finite field with $q$ elements. 
		Consider the conjugation action of the general linear group $G \coloneqq \GL_{2}(K)$ on the polynomial ring $S \coloneqq K[X_{2 \times 2}]$.
		Let 
		$f_{1} = \trace$, 
		$f_{2} = \det$, 
		$f_{3} = \mathcal{P}^{1}(\det)$, 
		and $f_{4}$
		be the primary invariants as in \Cref{defn:primary-invariants-description}, and $R$ the $K$-subalgebra generated by them.
		Then, $S^{G} = K[f_{1}, f_{2}, f_{3}, f_{4}, h]$ for a secondary invariant $h$ of degree $q^{2}$. 
		More precisely, we have the decomposition $S^{G} = R \oplus Rh$ as $R$-modules.
		In particular, $S^{G}$ is a hypersurface with 
		Hilbert series given as
		\begin{equation*} 
			\Hilb(S^{G}, z) = 
			\frac{1 + z^{q^{2}}}
			{(1 - z)(1 - z^{2})(1 - z^{q + 1})(1 - z^{q^{2} - q})}.
		\end{equation*}
		Additionally, the invariant ring $S^{G}$ 
		does not split from $S$ (equivalently, $S^{G}$
		is not $F$-regular),
	  has $a$-invariant $-4$,
	  and is a unique factorisation domain precisely when the characteristic of $K$ is two.
		If the characteristic is odd, then 
		the class group of $S^{G}$ is $\mathbb{Z}/2$, and
		the secondary invariant $h$ can be chosen to be the Jacobian of the $f_{i}$, 
		in which case, we have $h^{2} \in R$. 
	\end{thm}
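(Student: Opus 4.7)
The plan is to proceed in three stages. The first stage dispatches the primary invariants: once $f_{1}, f_{2}, f_{3}, f_{4}$ are written out via \Cref{defn:primary-invariants-description} with degrees $1, 2, q+1, q^{2}-q$ respectively, I would check algebraic independence and the HSOP property for $S$ by verifying that their common zero locus in $\MM_{2}(\overline{K})$ consists only of the zero matrix. This makes $R = K[f_{1}, f_{2}, f_{3}, f_{4}]$ a polynomial subring over which $S$, and therefore $S^{G}$, is module-finite.

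The technical heart is the Hilbert-series computation. Since the action is modular, Molien's formula does not apply directly, so I would use a Sylow-tower argument: let $U$ be the Sylow $p$-subgroup of upper unitriangular matrices in $G$; then $S^{U}$ admits a Dickson-style description with known Hilbert series, and because $G/U$ has order coprime to $p$, the Reynolds operator identifies $S^{G}$ with $(S^{U})^{G/U}$. Careful bookkeeping should recover exactly $(1 + z^{q^{2}})/\prod(1 - z^{d_{i}})$. The resulting Cohen--Macaulay $R$-module structure on $S^{G}$, together with this numerator, forces a decomposition $S^{G} = R \oplus Rh$ with $h$ of degree $q^{2}$, and the $a$-invariant is read off as $q^{2} - (1 + 2 + (q+1) + (q^{2}-q)) = -4$. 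For the hypersurface presentation, expand $h^{2} \in R \oplus Rh$ as $\alpha + \beta h$ and choose the secondary invariant so that $\beta = 0$; in odd characteristic the natural choice is the Jacobian $h = J$ of the $f_{i}$, which is a genuine invariant because the adjoint action of $\GL_{2}$ on $\MM_{2}$ has trivial determinant, and which lies outside $R$ by a degree count. The vanishing of $\beta$ then follows from the transpose automorphism $X \mapsto X^{T}$, which fixes each $f_{i}$ (hence $R$ pointwise) but negates $J$, forcing $2\beta = 0$ in the relation $J^{2} = \alpha + \beta J$ and hence $\beta = 0$ in odd characteristic. In characteristic two the Jacobian construction degenerates, and $h$ must be produced separately, with a hypersurface relation of Artin--Schreier type $h^{2} + \alpha h = \beta$, $\alpha \ne 0$.

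For the finer assertions: non-splitting of $S^{G} \hookrightarrow S$, equivalently the failure of $F$-regularity for finite-group invariants, I would prove by exhibiting an invariant not in the image of the relative transfer $\operatorname{tr}_{G} \colon S \to S^{G}$, which a degree-and-characteristic argument should rule out for a suitably chosen low-degree invariant. For the class-group dichotomy, in odd characteristic the relation $J^{2} \in R$ presents $S^{G}$ as a ramified degree-two cover of the UFD $R$, and Samuel's descent together with the exhibition of a non-principal height-one prime gives $\Cl(S^{G}) = \mathbb{Z}/2$. In characteristic two, the separable Artin--Schreier form of the defining relation permits a direct analysis showing every height-one prime of $S^{G}$ is principal, yielding the UFD property. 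The main obstacle throughout is the Hilbert-series computation, and secondarily the characteristic-two analysis, where the usual Jacobian approach fails and a tailored secondary invariant must be constructed and its defining relation studied.
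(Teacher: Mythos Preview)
The central gap is your Hilbert-series computation. The unipotent Sylow subgroup $U = \smatrix{1 & K \\ 0 & 1}$ is \emph{not normal} in $G$, so there is no group ``$G/U$'' acting on $S^{U}$, and the identification $S^{G} = (S^{U})^{G/U}$ you invoke does not make sense. (Nor does the conjugation action of $U$ on the four-dimensional space $\MM_{2}$ have an off-the-shelf Dickson-style invariant ring.) The paper avoids a direct Hilbert-series computation altogether. Instead: (i)~$\codim_{V}(V^{U}) = 2$ implies $S^{U}$ is Cohen--Macaulay (Ellingsrud--Skjelbred/Campbell--Wehlau), and since $[G:U]$ is prime to $p$ the relative transfer exhibits $S^{G}$ as an $S^{G}$-module direct summand of $S^{U}$, so $S^{G}$ is Cohen--Macaulay; (ii)~Cohen--Macaulayness then gives the number of secondary invariants exactly as $\prod_{i}\deg(f_{i})/\lvert\widehat{G}\rvert = 2(q+1)(q^{2}-q)/\lvert\PGL_{2}(K)\rvert = 2$, forcing $S^{G} = R \oplus Rh$; (iii)~the equality $a(S^{G}) = a(S) = -4$ comes from a general theorem (since $\widehat{G} \subset \SL(V)$ contains no pseudoreflections), and only then are $\deg h = q^{2}$ and the Hilbert series read off. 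Your proposal inverts this logic, trying to deduce Cohen--Macaulayness and the $a$-invariant from a Hilbert series you have no valid route to.

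Where your proposal is sound it sometimes parallels the paper and sometimes takes a harder road. Your transpose involution $X \mapsto X^{\tr}$ is a perfectly good substitute for the paper's swap $a \leftrightarrow d$: both fix each $f_{i}$ and have determinant $-1$ on $V^{\ast}$, hence negate the Jacobian $J$. Note that this same argument, not a ``degree count'', is what actually shows $J \notin R$ in odd characteristic (the involution fixes $R$ pointwise but sends $J$ to $-J$). For non-splitting and the class group, the paper's arguments are much shorter than yours: non-$F$-regularity follows immediately from $a(S^{G}) = a(S)$ together with modularity, and the class group is computed in one stroke via Benson's isomorphism $\Cl(S^{G}) \cong \Hom(\widehat{G}, K^{\times})$ (valid because the action has no pseudoreflections), which handles characteristic two uniformly without a separate Artin--Schreier analysis of height-one primes.
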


	In the other parts of the paper, we consider the conjugation action for the other classical subgroups of $\GL_{2}(K)$, 
	namely, the \emph{special linear group} $\SL_{2}(K)$ and the \emph{orthogonal group} $\OO_{2}(K)$. 
	Note that the \emph{symplectic group} $\Sp_{2}(K)$ coincides with $\SL_{2}(K)$. 
	For each of these groups, we first consider the \emph{generic} conjugation action, i.e., 
	the conjugation action on the space $\gl$ of $2 \times 2$ matrices. 
	Next, we consider the conjugation actions of $\GL_{2}$ and $\SL_{2}$ on the space $\sl$ of traceless matrices, 
	and the conjugation action of $\OO_{2}$ on the space $\oo$ of alternating matrices and on the space of symmetric matrices. 
	In particular, this includes the \emph{adjoint representation} of each classical group%
	---the action of an algebraic group on its Lie algebra, induced by conjugation. 

	\Cref{tab:summary} summarises the description of the invariant ring in each case. 
	In particular, we note that the invariant ring is never worse than a hypersurface. 
	For all the rows saying `hypersurface', the result is sharp: 
	we know that the invariant ring is not a polynomial ring; 
	in fact, its $a$-invariant is the same as that of the ambient polynomial ring. 
	The failure of being a polynomial ring can also be explained by the lack of pseudoreflections in those actions. 
	In each case, the set of primary invariants described is optimal. 
	Moreover, the additional algebra generator (when required) can be chosen as the Jacobian of our choice of primary invariants, 
	at least in odd characteristic. 
	Our choice of primary invariants for the $\GL_{2}$ and $\SL_{2}$ actions is rather uniform as \Cref{tab:summary} shows; 
	only the fourth invariant needs to be suitably modified in each case. 
	For the $\OO_{2}$ action, we have invariants of smaller degree, owing to the smaller order of the group. 
	The ring of invariants is a unique factorisation domain whenever the characteristic is two;
	the class group turns out to be either $\mathbb{Z}/2$ or $(\mathbb{Z}/2)^{2}$ whenever the ring is not a unique factorisation domain, 
	the latter appearing only for the generic action of $\OO_{2}$.

\bgroup \renewcommand{\arraystretch}{1.8}
	\begin{table}[]
	\centering
	\begin{tabular}{|c|c|c|cccc|}
	\hline
	\textbf{Group} &
	  \textbf{Space} &
	  \textbf{$\md{\pmb{K}}$} &
	  \multicolumn{1}{c|}{\textbf{Primary invariants}} &
	  \multicolumn{1}{c|}{\textbf{Type}} &
	  \multicolumn{1}{c|}{\textbf{UFD}} &
	  \textbf{Reference} \\ \hline
	\hline
	\multirow{4}{*}{$\GL_{2}(K)$} &
	  \multirow{2}{*}{$\bgeneric$} &
	  even &
	  \multicolumn{1}{c|}{\multirow{2}{*}{$\trace,\; \det,\; \mathcal{P}^{1}(\det),\; f_{4}$}} &
	  \multicolumn{1}{c|}{\multirow{2}{*}{Hypersurface}} &
	  \multicolumn{1}{c|}{Yes} &
	  \multirow{2}{*}{\Cref{mainthm}} \\ \cline{3-3} \cline{6-6}
	 &
	   &
	  odd &
	  \multicolumn{1}{c|}{} &
	  \multicolumn{1}{c|}{} &
	  \multicolumn{1}{c|}{No} &
	   \\ \cline{2-7} 
	 &
	  \multirow{2}{*}{$\btraceless$} &
	  even &
	  \multicolumn{1}{c|}{$\det,\; \mathcal{P}^{1}(\det),\; \sqrt{\widetilde{f}_{4}}$} &
	  \multicolumn{1}{c|}{Polynomial} &
	  \multicolumn{1}{c|}{Yes} &
	  \multirow{2}{*}{\Cref{thm:traceless-GL}} \\ \cline{3-6}
	 &
	   &
	  odd &
	  \multicolumn{1}{c|}{$\det,\; \mathcal{P}^{1}(\det),\; \widetilde{f}_{4}$} &
	  \multicolumn{1}{c|}{Hypersurface} &
	  \multicolumn{1}{c|}{No} &
	   \\ \hline
	\hline
	\multirow{4}{*}{$\SL_{2}(K)$} &
	  \multirow{2}{*}{$\bgeneric$} &
	  even &
	  \multicolumn{4}{c|}{Same as $\GL_{2}(K)$} \\ \cline{3-7} 
	 &
	   &
	  odd &
	  \multicolumn{1}{c|}{$\trace,\; \det,\; \mathcal{P}^{1}(\det),\; \sqrt{- f_{4}}$} &
	  \multicolumn{1}{c|}{Hypersurface} &
	  \multicolumn{1}{c|}{Yes} &
	  \Cref{thm:SL-generic-invariants} \\ \cline{2-7} 
	 &
	  \multirow{2}{*}{$\btraceless$} &
	  even &
	  \multicolumn{4}{c|}{Same as $\GL_{2}(K)$} \\ \cline{3-7} 
	 &
	   &
	  odd &
	  \multicolumn{1}{c|}{$\det,\; \mathcal{P}^{1}(\det),\; \sqrt{- \widetilde{f}_{4}}$} &
	  \multicolumn{1}{c|}{Hypersurface} &
	  \multicolumn{1}{c|}{Yes} &
	  \Cref{thm:SL-traceless-invariants} \\ \hline
	\hline
	\multirow{6}{*}{$\OO_{2}(K)$} &
	  \multirow{2}{*}{$\bgeneric$} &
	  even &
	  \multicolumn{1}{c|}{$\trace,\; \det,\; b - c,\; N(a)$} &
	  \multicolumn{1}{c|}{\multirow{2}{*}{Hypersurface}} &
	  \multicolumn{1}{c|}{Yes} &
	  \multirow{2}{*}{\Cref{thm:O-generic-invariants}} \\ \cline{3-4} \cline{6-6}
	 &
	   &
	  odd &
	  \multicolumn{1}{c|}{$\trace,\; \det,\; (b - c)^2,\; N(a)$} &
	  \multicolumn{1}{c|}{} &
	  \multicolumn{1}{c|}{No} &
	   \\ \cline{2-7} 
	 &
	  \multirow{2}{*}{$\balternating$} &
	  even &
	  \multicolumn{1}{c|}{$b$} &
	  \multicolumn{1}{c|}{\multirow{2}{*}{Polynomial}} &
	  \multicolumn{1}{c|}{\multirow{2}{*}{Yes}} &
	  \multirow{2}{*}{\Cref{rem:O-adjoint-invariants}} \\ \cline{3-4}
	 &
	   &
	  odd &
	  \multicolumn{1}{c|}{$b^{2}$} &
	  \multicolumn{1}{c|}{} &
	  \multicolumn{1}{c|}{} &
	   \\ \cline{2-7} 
	 &
	  \multirow{2}{*}{$\bsymmetric$} &
	  even &
	  \multicolumn{1}{c|}{$\trace,\; \det,\; \sum_{k} b^{2^{k}} (a + d)^{q/2 - 2^{k}}$} &
	  \multicolumn{1}{c|}{\multirow{2}{*}{Polynomial}} &
	  \multicolumn{1}{c|}{\multirow{2}{*}{Yes}} &
	  \multirow{2}{*}{\Cref{thm:O-symmetric-invariants}} \\ \cline{3-4}
	 &
	   &
	  odd &
	  \multicolumn{1}{c|}{$\trace,\; \det,\; N(a)$} &
	  \multicolumn{1}{c|}{} &
	  \multicolumn{1}{c|}{} &
	   \\ \hline
	\end{tabular}
	\caption{An overview of the descriptions of the invariant rings.}
	\label{tab:summary}
	\end{table}
\egroup

	We now compare our work with related previous works. 
	Anghel~\Cite{Anghel:SL2} considered the conjugation action of $\SL_{2}$ on $\gl$ and computed the ring of invariants for both finite and infinite fields. 
	This was done by working with three types of generators of $\SL_{2}(\mathbb{F}_{q})$, identifying invariance conditions for these, and then explicitly solving for the polynomials that satisfy those conditions. 
	In contrast, our methods are more theoretical and provide a uniform template for computing the rings of invariants for all the actions that we consider. 
	Our choice of the algebra generators also differs: Anghel constructs the third primary invariant as a product of $q+1$ linear forms, 
	whereas we give a closed form by making use of the Steenrod operations. 
	On the other hand, our fourth primary invariant is a product of linear forms, whereas Anghel's is a sum of such products. 
	Anghel also constructs the secondary invariant (in odd characteristic) as a product of suitable linear forms, whereas we construct it as the Jacobian of the primary invariants. 
	In characteristic two, both our results show the existence of the secondary invariant of the correct degree without explicitly constructing one. 
	Smith~\Cite{Smith:Conjugation} computed the rings of invariants for the conjugation action of $\GL_{2}(\mathbb{F}_{q})$ on $\gl$ and $\sl$ for $q$ an odd prime. 
	This was done by observing that the conjugation action of $\GL_{2}$ on $\sl$ is the `same' as that of $\SO_{3}$ in its defining representation. 
	Smith identifies four primary invariants and shows that the ring is a hypersurface. 
	Our choice of the fourth primary invariant differs from Smith's, and we are also able to construct a secondary invariant, 
	giving a complete algebra generating set. 
	Moreover, our techniques apply to characteristic two as well, and we obtain the corresponding structural results. 
	While the previous works were mainly concerned with computing the rings of invariants, 
	we determine further algebraic properties of $S^{G}$ such as its $a$-invariant, its class group, and show that the inclusion $S^{G} \into S$ does not split unless $S^{G}$ is a polynomial ring. 

	The terms ``primary'' and ``secondary'' above have their usual meanings as in the invariant theory of finite groups, that we elaborate on now. 
	A good reference for this is~\Cite[\S3]{DerksenKemper}. 
	If $G$ is a finite group acting on a polynomial ring $S = K[x_{1}, \ldots, x_{n}]$ by degree-preserving $K$-algebra automorphisms, then \deff{primary invariants} are homogeneous invariants $f_{1}, \ldots, f_{n} \in S^{G}$ satisfying any of the following equivalent properties:
	\begin{enumerate}[label=(\alph*)]
		\item The radical of $(f_{1}, \ldots, f_{n}) S$ is the homogeneous maximal ideal of $S$, i.e., the $f_{i}$ form a \deff{homogeneous system of parameters} for $S$.
		\item The set of common zeroes of $\{f_{1}, \ldots, f_{n}\}$ in $\overline{K}^{n}$ is the origin, where $\overline{K}$ is the algebraic closure of $K$.
		\item The subalgebra $R \coloneqq K[f_{1}, \ldots, f_{n}]$ is a polynomial ring and $S^{G}$ is a finite $R$-module. 
		In this case, we refer to $R$ as a \deff{Noether normalisation} for $S^{G}$.
	\end{enumerate}

	Once we have a set of primary invariants with corresponding Noether normalisation $R$, the next objective is to determine $R$-module generators for $S^{G}$. 
	Finiteness tells us that we may (minimally) write
	\begin{equation} \label{eq:secondary-invariants-decomposition}
		S^{G} = R h_{1} + \cdots + R h_{s}
	\end{equation}
	for some $h_{j} \in S^{G}$, called the \deff{secondary invariants}. 
	The above sum is direct precisely when the ring $S^{G}$ is Cohen--Macaulay.
	In any case, we then obtain the equality of $K$-algebras
	\begin{equation*} 
		S^{G} = K[f_{1}, \ldots, f_{n}, h_{1}, \ldots, h_{s}].
	\end{equation*}
	We may always assume $h_{1} = 1$ and exclude it from the above algebra generating set.
	We remark that primary and secondary invariants are not uniquely determined;
	moreover, the minimal number of secondary invariants depends on the choice of primary invariants. 
	A lower bound is given in~\Cite[Theorem 3.7.1]{DerksenKemper}; the equality holds precisely when $S^{G}$ is Cohen--Macaulay. 
	We will see (\Cref{rem:optimal}) that the choices of the primary and secondary invariants in \Cref{tab:summary} are optimal. 

	The outline of the proof of the main result---and of \Cref{part:one}---is as follows: 
	We first introduce the relevant notation and definitions.
	In particular, we introduce a `larger' group $\Gamma$ acting on $S$ such that $S^{\Gamma} \subset S^{G}$. 
	We collect some facts about the conjugation action in \Cref{sec:facts-conjugation} that only rely on basic linear algebra. 
	In \Cref{sec:primary-invariants}, we define the invariants 
	$f_{1}, f_{2}, f_{3}, f_{4}$ 
	and show that these form a set of primary invariants; we denote the corresponding Noether normalisation as $R$. 
	We then deduce homological properties of $S^{G}$ and $S^{\Gamma}$ in \Cref{sec:homological-properties}. 
	In particular, we show that $S^{G}$ is a hypersurface of the form $R \oplus R \eta$, and that $S^{\Gamma}$ is the polynomial ring $R$. 
	We make use of the $a$-invariant to obtain the degree of $\eta$ to be $q^{2}$. 
	Consequently, we obtain the Hilbert series of $S^{G}$ and conclude that any element of $S^{G} \mysetminus R$ of the correct degree can serve as $\eta$. 
	In \Cref{sec:missing-invariant}, we construct an invariant $h$ of the correct degree, by defining it to be the Jacobian of the primary invariants. 
	We make use of the $\Gamma$-action to show that $h^{2} \in R$, and that $h \notin R$ when $\chr(K)$ is odd; 
	this finishes the problem of describing the generators and relations. 
	The additional results about $F$-regularity and factoriality are proven in \Cref{sec:additional}. 
	To construct one of the primary invariants, we make use of the Steenrod operations;
	the relevant notation and results are reviewed in \Cref{sec:steenrod}. 
	\Cref{part:two} concerns the action of $\GL_{2}$ on the space $\sl$ of traceless matrices, as well as the actions of $\SL_{2}$ on each of $\gl$ and $\sl$. 
	In \Cref{part:three}, we consider the actions of $\OO_{2}$ on 
	the space $\oo$ of alternating matrices, 
	the space of symmetric matrices, 
	and 
	$\gl$. 
	The proof in each case follows the same template as that for \Cref{mainthm}. 

	\subsection*{Acknowledgements}
	The author thanks Anurag K. Singh for several interesting discussions. 
	\Cref{part:two,part:three} arose out of the natural questions suggested by 
	Karthik Ganapathy, Suhas Gondi, Vaibhav Pandey, Steven Sam, and Ramanujan Srihari; 
	the author thanks them for the same. 
	The author is also grateful to the anonymous referee for the numerous suggestions and references to the existing literature. 
	This work has greatly benefited from examples computed using the computer algebra system \texttt{Magma}~\Cite{Magma}, the use of which is gratefully acknowledged.

\part{The action of \texorpdfstring{$\GL_{2}$}{GL2} on \texorpdfstring{$\gl$}{gl2} } \label{part:one}

	Let $q$ be a power of a positive prime $p$. 
	We set $K \coloneqq \mathbb{F}_{q}$, the field with $q$ elements, and let $\overline{K}$ denote its algebraic closure. 
	Let $G \coloneqq \GL_{2}(K)$ be the general linear group, and 
	$S \coloneqq K[a, b, c, d]$ the polynomial ring over $K$ in four variables. 
	We consider the conjugation action of $G$ on $S$ given by
	\begin{equation} \label{eq:G-action-defn}
		\sigma \colon
		\begin{bmatrix}
			a & b \\
			c & d \\
		\end{bmatrix}
		\mapsto
		\sigma^{-1}
		\begin{bmatrix}
			a & b \\
			c & d \\
		\end{bmatrix}
		\sigma,
	\end{equation}
	i.e., $\sigma \in G$ maps each variable to the corresponding entry of 
	$\sigma^{-1}
		\smatrix{
			a & b \\
			c & d \\
		}
		\sigma$.

	We note that the above action comes from the conjugation action of $G$ on $V \coloneqq \MM_{2}(K)$, the space of $2 \times 2$ matrices, as follows: 
	the action of $G$ on $V$ is given by $(\sigma, M) \mapsto \sigma M \sigma^{-1}$. 
	Said otherwise, this is the adjoint representation of $G$ on $V = \gl$,
	which, in turn, gives us a left action of $G$ on $\Sym(V^{\ast}) \cong S$. 
	The isomorphism is constructed as follows: let $E_{ij} \in V$ denote the matrix whose sole nonzero entry is a $1$ in the $(i, j)$ position.
	Then, $B \coloneqq \{E_{11}, E_{12}, E_{21}, E_{22}\}$ is a basis for $V$. 
	If we let $\{a, b, c, d\} \subset V^{\ast}$ denote its dual basis, then $\Sym(V^{\ast})$ is the polynomial ring $S = K[a, b, c, d]$, and the action is precisely~\Cref{eq:G-action-defn} under this identification.
	We shall use both perspectives in describing the ring of invariants $S^{G}$. 

	We denote the corresponding representation as 
	\begin{equation} \label{eq:representation-rho}
		\rho \colon G \to \GL(V),
	\end{equation}
	and set $\widehat{G} \coloneqq \rho(G) \le \GL(V)$. 
	The group $\widehat{G}$ acts faithfully on both $V$ and $S$ via the action of $G$.
	While we have the equality $S^{G} = S^{\widehat{G}}$, the distinction between $G$ and $\widehat{G}$ is necessary when we use results that require that the group action be faithful.
	Note that the action of $G$ is not faithful, i.e., $\rho$ is not injective: 
	the kernel consists precisely of the invertible scalar matrices, and thus $\widehat{G} \cong \PGL_{2}(K)$. 
	In particular, the orders of the groups are 
	\begin{equation*} 
		\md{G} = (q^{2} - 1)(q^{2} - q) \andd \mdd{\widehat{G}} = q(q^{2} - 1).
	\end{equation*}
	Thus, the action of $G$ is \emph{modular}, i.e., $\md{\rho(G)}$ is divisible by the characteristic of $K$. 

	We define the larger group $\Gamma \coloneqq \langle \widehat{G}, \tau_{ad} \rangle \le \GL(V)$, where $\tau_{ad} \in \GL(V)$ is the automorphism fixing $E_{12}$ and $E_{21}$, and swapping $E_{11} \leftrightarrow E_{22}$.  
	Equivalently, the action of $\tau_{ad}$ on $S$ is given by fixing $b$ and $c$, and swapping $a \leftrightarrow d$. 

	As the field $K$ is finite, there are monic irreducible polynomials in $K[x]$ of any given positive degree. 
	We fix an irreducible quadratic polynomial $g(x) = x^{2} - \tau x + \delta \in K[x]$ for the rest of \Cref{part:one}.

	The ring $S$ will have its standard $\mathbb{N}$-grading throughout the paper, i.e., all the variables have degree~$1$. 
	The subrings that we construct will be graded subrings that are finitely generated algebras over $K$. 
	For such a graded ring $R = \bigoplus_{n \ge 0} R_{n}$, its \emph{Hilbert series} is the power series
	\begin{equation*} 
		\Hilb(R, z) 
		\coloneqq 
		\sum_{n \ge 0} \rank_{K}(R_{n}) z^{n} 
		\in \mathbb{Q}\llbracket z \rrbracket.
	\end{equation*}
	We suppress the variable when no explicit mention is necessary.
	The Hilbert--Serre theorem asserts that the above power series is a rational function, see~\Cite[Theorem 11.1]{AtiyahMacdonald}. 
	Writing $\Hilb(R) = f/g$ for polynomials $f$ and $g$, we define the \deff{degree} of $\Hilb(R)$ to be the difference 
	$\deg \Hilb(R) \coloneqq \deg(f) - \deg(g)$.

\section{Preliminaries about the conjugation action} \label{sec:facts-conjugation}
			
	We collect some basic facts pertaining to the linear algebra of the conjugation action. 
	In particular, we show that the representation $\rho$ factors through $\SL(V)$ and contains no pseudoreflections. 
	Note that the (non-canonical) isomorphism 
	$(-)^{\ast} \colon V \to V^{\ast}$ 
	of $K$-vector spaces obtained by our choice of bases is given as 
	\begin{equation*} 
		\begin{bmatrix}A & B \\ C & D\end{bmatrix} 
		\mapsto A a + B b + C c + D d.
	\end{equation*} 
	While this isomorphism is not $G$-equivariant, the following lemma describes how it interacts with $G$.

	\begin{lem}
		For $v \in V$ and $\sigma \in G$, we have 
		$(\sigma \cdot v)^{\ast} = (\sigma^{-1})^{\tr} \cdot v^{\ast}$. 
	\end{lem}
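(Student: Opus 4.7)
The approach is a direct linear-algebraic verification. Both sides of the claimed identity are $K$-linear in $v$: the left-hand side because $\sigma$ acts on $V$ linearly and $(-)^{\ast}$ is a $K$-linear isomorphism, and the right-hand side because the $G$-action on $S$ preserves the subspace $V^{\ast}$ of linear forms and $(-)^{\ast}$ is linear. It therefore suffices to verify the equality on the basis elements $v = E_{ij}$, for which $v^{\ast}$ is the corresponding variable in $\{a, b, c, d\}$.

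For the left-hand side, I compute the entries of the matrix $\sigma E_{ij} \sigma^{-1}$ entrywise: its $(k,l)$-entry equals $\sigma_{ki} (\sigma^{-1})_{jl}$, and hence under $(-)^{\ast}$ one obtains the linear form $\sum_{k,l} \sigma_{ki} (\sigma^{-1})_{jl} x_{kl}$. For the right-hand side, set $\tau \coloneqq (\sigma^{-1})^{\tr}$, so that $\tau^{-1} = \sigma^{\tr}$. By the defining formula \Cref{eq:G-action-defn}, the linear form $\tau \cdot x_{ij}$ is the $(i, j)$-entry of $\tau^{-1} X \tau = \sigma^{\tr} X (\sigma^{-1})^{\tr}$. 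Expanding this entry as a double sum over the matrix product indices yields
\begin{equation*}
	\bigl(\sigma^{\tr} X (\sigma^{-1})^{\tr}\bigr)_{ij}
	= \sum_{k,l} (\sigma^{\tr})_{ik}\, x_{kl}\, \bigl((\sigma^{-1})^{\tr}\bigr)_{lj}
	= \sum_{k,l} \sigma_{ki}\, x_{kl}\, (\sigma^{-1})_{jl},
\end{equation*}
which matches the left-hand side term by term.

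The only delicate point is the bookkeeping of transposes: they arise because conjugation acts on matrix entries by left multiplication on rows and right multiplication on columns, while the contragredient action on the dual variables reverses these roles. I expect no substantive obstacle. A more conceptual alternative, which I would mention only in passing, is to observe that identifying $V$ with $V^{\ast}$ via the nondegenerate trace pairing $\langle A, B \rangle \coloneqq \trace(A B^{\tr})$ recovers the chosen isomorphism $(-)^{\ast}$, and the claim then reduces to the cyclic invariance of trace: $\trace\bigl(w\, (\sigma v \sigma^{-1})^{\tr}\bigr) = \trace\bigl((\sigma^{\tr} w (\sigma^{-1})^{\tr})\, v^{\tr}\bigr)$, where $\sigma^{\tr} w (\sigma^{-1})^{\tr}$ is precisely the conjugate of $w$ by $(\sigma^{-1})^{\tr}$.
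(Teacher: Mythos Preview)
Your proposal is correct. Your primary route---a direct entrywise computation on the basis $E_{ij}$, expanding both $(\sigma E_{ij}\sigma^{-1})_{kl}$ and $(\sigma^{\tr} X (\sigma^{-1})^{\tr})_{ij}$---is different from the paper's argument, which instead cites the general fact $\rho^{\ast}(\sigma) = \rho(\sigma^{-1})^{\tr}$ and then reduces the claim to showing $\rho(\sigma)^{\tr} = \rho(\sigma^{\tr})$ via the trace form $\langle M,N\rangle = \trace(MN^{\tr})$ and cyclic invariance. Your approach is more elementary and self-contained (no external citation needed), at the cost of being a bit more mechanical; the paper's is shorter and more structural. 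The ``conceptual alternative'' you sketch at the end---identifying $(-)^{\ast}$ with the trace pairing and invoking cyclicity---is in fact essentially the paper's proof.
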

	\begin{proof} 
		For this proof, we use $\rho$ and $\rho^{\ast}$ to denote the respective homomorphisms $G \to \GL_{4}(K)$ obtained by the identifications $V \cong K^{4}$ and $V^{\ast} \cong K^{4}$ using our choice of standard bases. 
		By~\Cite[Lemma 1.1.1]{CampbellWehlau:ModularInvariantTheory}, we have $\rho^{\ast}(\sigma) = \rho(\sigma^{^{-1}})^{\tr}$. 
		Thus, it suffices to show $\rho(\sigma)^{\tr} = \rho(\sigma^{\tr})$. 
		To this end, note that the standard inner product on $K^{4}$ translates to one on $V$ as $\langle M, N \rangle = \trace(M N^{\tr})$ for $M, N \in V$. 
		Then, for $\sigma \in G$, we have
		\begin{align*} 
			\langle \sigma \cdot M, N \rangle 
			= \trace(\sigma M \sigma^{-1} N^{\tr}) 
			= \trace(M \sigma^{-1} N^{\tr} \sigma) 
			= \langle M, \sigma^{\tr} \cdot N \rangle.
		\end{align*}
		Thus, 
		$\langle \rho(\sigma)M, N \rangle = \langle M, \rho(\sigma^{\tr}) N \rangle$, 
		giving us the desired assertion that $\rho(\sigma)^{\tr} = \rho(\sigma^{\tr})$.
	\end{proof}

	\begin{lem} 
		Let $\Omega \subset V$ be the set of matrices whose characteristic polynomial is equal to $g(x)$. 
		Then, we have
		\begin{equation*} 
			\Omega =
			\left\{
			\begin{bmatrix}
				A & B \\
				-\frac{g(A)}{B} & \tau - A
			\end{bmatrix}
			:
			A \in K, 
			B \in K^{\times}
			\right\}.
		\end{equation*}
		Moreover, any two elements of $\Omega$ are similar, and thus $\Omega$ is a single orbit for the conjugation action of $G$ on $V$.
	\end{lem}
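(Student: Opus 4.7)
The plan is to handle the two assertions separately: the explicit description of $\Omega$, followed by the fact that $\Omega$ is a single $G$-orbit.

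For the parameterization, I would take a general matrix $M = \begin{bmatrix} A & B \\ C & D \end{bmatrix}$ and expand its characteristic polynomial as $x^{2} - (A+D)x + (AD - BC)$. Equating this with $g(x) = x^{2} - \tau x + \delta$ gives $D = \tau - A$ and $BC = -g(A)$. Since $g$ is irreducible over $K$, it has no roots in $K$, so $g(A) \ne 0$ for every $A \in K$; this forces both $B$ and $C$ to be nonzero. Parameterizing by $(A, B) \in K \times K^{\times}$ and solving $C = -g(A)/B$ then produces the displayed description. In particular, every element of $\Omega$ has a nonzero off-diagonal entry.

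For the orbit statement, the key observation is that the minimal polynomial of any $M \in \Omega$ must equal $g$ itself: it divides the characteristic polynomial $g$ by Cayley--Hamilton, and a proper divisor would be linear, forcing $M$ to be a scalar matrix, which contradicts the irreducibility of $g$. Hence the rational canonical form of $M$ over $K$ is the companion matrix $\begin{bmatrix} 0 & -\delta \\ 1 & \tau \end{bmatrix}$ of $g$. Concretely, for any nonzero $v \in V$, the vectors $v$ and $Mv$ are $K$-linearly independent---otherwise $Mv = \lambda v$ with $\lambda \in K$ a root of $g$---so in the basis $\{v, Mv\}$ the operator $M$ is precisely this companion matrix. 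Hence any two elements of $\Omega$ are $K$-similar; combined with the $G$-stability of $\Omega$ (characteristic polynomials are conjugation invariants), this shows $\Omega$ is a single $G$-orbit.

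The only substantive point is ensuring that similarity holds over $K$ rather than just over $\overline{K}$; this is exactly where the irreducibility of $g$ is essential, since it forces $K[M]$ to be a field and $V$ to be one-dimensional over it, ruling out any pathological splitting of the module into non-isomorphic summands.
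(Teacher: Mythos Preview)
Your argument is correct. The parameterization of $\Omega$ is handled exactly as in the paper: equate trace and determinant, then use irreducibility of $g$ to force $B \neq 0$ and solve for $C$.

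The similarity statement is where you diverge. The paper passes to the algebraic closure: since $K$ is perfect, $g$ splits with distinct roots $\mu, \nu \in \overline{K}$, so every $M \in \Omega$ is diagonalizable over $\overline{K}$ with spectrum $\{\mu, \nu\}$, hence all are similar over $\overline{K}$; the paper then invokes the standard fact that similarity over an extension field implies similarity over the base field. You instead work entirely over $K$ via the rational canonical form, using irreducibility of $g$ to show that any nonzero vector is cyclic, so every $M \in \Omega$ is conjugate over $K$ to the companion matrix of $g$. Your route is more self-contained---it avoids the descent-of-similarity step, which, while classical, is not entirely trivial---and it makes transparent exactly where irreducibility is used. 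The paper's route is terser but leans on an external fact; yours proves what is needed directly.
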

	\begin{proof} 
		Finite fields are perfect, and hence $g(x)$ has distinct factors $\mu, \nu \in \overline{K}$. 
		In turn, any $M \in \Omega$ is similar to $\diag(\mu, \nu)$ over $\overline{K}$. 
		In particular, any two elements of $\Omega$ are similar over $\overline{K}$, implying the same over $K$.
		
		We now determine $\Omega$.
		Consider an arbitrary matrix
		\begin{equation*} 
			M = 
			\begin{bmatrix}
				A & B \\
				C & D \\
			\end{bmatrix}
			\in \MM_{2}(K).
		\end{equation*}
		For its characteristic polynomial to be $g(x) = x^{2} - \tau x + \delta$, we must have $\trace(M) = \tau$ and $\det(M) = \delta$. 
		The first condition gives us $D = \tau - A$ and in turn
		\begin{equation*} 
			A(\tau - A) - B C = \delta.
		\end{equation*}
		Rearranging the above give us
		\begin{equation*} 
			- B C = g(A).
		\end{equation*}
		Because $g$ is irreducible, we have that $g(A) \neq 0$ for all $A \in K$, and thus $B$ is nonzero. 
		We may then solve for $C$ to obtain the desired result.
	\end{proof}

	\begin{cor} \label{cor:orbit-irreducible-quadraic}
		The set of homogeneous linear polynomials in $K[a, b, c, d]$ defined by
		\begin{equation*} 
			\Omega \coloneqq \left\{Aa + Bb - \frac{g(A)}{B}c + (\tau - A)d 
			:
			A \in K,\,
			B \in K^{\times}
			\right\}
		\end{equation*}
		forms a single $G$-orbit of size $q^{2} - q$. 
		Moreover $\Omega$ is stable under the action of $\Gamma$.
	\end{cor}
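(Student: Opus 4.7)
The plan is to transport the orbit description from the previous lemma—which identifies the $G$-orbit of matrices in $V$ with characteristic polynomial $g(x)$—over to $V^{\ast}$ using the $K$-linear isomorphism $(-)^{\ast} \colon V \to V^{\ast}$ together with its compatibility with the $G$-action. Under the chosen dual-basis identification, the matrix
$\smatrix{A & B \\ -g(A)/B & \tau - A}$
corresponds to the linear form $Aa + Bb - (g(A)/B)c + (\tau - A)d$. Thus the $\Omega$ of the corollary is precisely the image under $(-)^{\ast}$ of the $\Omega$ of the preceding lemma.

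I would then argue the $G$-orbit statement as follows. The first lemma of this section gives $(\sigma \cdot v)^{\ast} = (\sigma^{-1})^{\tr} \cdot v^{\ast}$; since the set of matrices with characteristic polynomial $g(x)$ is $G$-stable (conjugation preserves characteristic polynomial) and forms a single orbit by the previous lemma, applying $(-)^{\ast}$ shows that $\Omega \subset V^{\ast}$ is stable under the action of $(\sigma^{-1})^{\tr}$ for every $\sigma \in G$. Since $\sigma \mapsto (\sigma^{-1})^{\tr}$ is a bijection of $G$ onto itself, $\Omega$ is $G$-stable, and in fact a single $G$-orbit. The count $\md{\Omega} = q^{2} - q$ is then read off from the parametrisation: the coefficients of $a$ and $b$ recover $A \in K$ and $B \in K^{\times}$ uniquely, so the map $(A, B) \mapsto Aa + Bb - (g(A)/B)c + (\tau - A) d$ is injective, yielding $\md{K} \cdot \md{K^{\times}} = q(q-1)$ elements.

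It remains to verify $\Gamma$-stability, for which it suffices to show that $\tau_{ad}$ preserves $\Omega$, as $\Gamma$ is generated by $\widehat{G}$ and $\tau_{ad}$. Applying $\tau_{ad}$ (which swaps $a \leftrightarrow d$ and fixes $b, c$) to a typical element of $\Omega$ produces
\begin{equation*}
    (\tau - A) a + B b - \frac{g(A)}{B} c + A d.
\end{equation*}
This lies in $\Omega$ with new parameters $(A', B') = (\tau - A, B)$ provided $g(A') = g(A)$, which reduces to the identity $g(\tau - A) = g(A)$. A direct expansion of $g(x) = x^{2} - \tau x + \delta$ confirms this.

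There is no real obstacle here: the proof is essentially a dictionary translation combined with the symmetry $g(\tau - A) = g(A)$. The main care needed is to keep track of the two different $G$-actions (on $V$ versus $V^{\ast}$) and to invoke the compatibility $(\sigma \cdot v)^{\ast} = (\sigma^{-1})^{\tr} \cdot v^{\ast}$ correctly, using that $\sigma \mapsto (\sigma^{-1})^{\tr}$ is a self-bijection of $G$ so that orbit stability and transitivity transfer cleanly.
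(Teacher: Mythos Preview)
Your proof is correct and follows essentially the same approach as the paper. The paper's own proof says ``Only the last statement needs a proof'' and then verifies $\tau_{ad}$-stability via the identity $g(\tau - A) = g(A)$, exactly as you do; your additional care in transporting the orbit from $V$ to $V^{\ast}$ via the twisted equivariance $(\sigma \cdot v)^{\ast} = (\sigma^{-1})^{\tr} \cdot v^{\ast}$ and the self-bijection $\sigma \mapsto (\sigma^{-1})^{\tr}$ of $G$ simply makes explicit what the paper treats as immediate from the preceding lemma.
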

	\begin{proof} 
		Only the last statement needs a proof. 
		To this end, note that if $A \in K$ and $B \in K^{\times}$, then
		\begin{align*} 
			\tau_{ad}\left(Aa + Bb - \frac{g(A)}{B}c + (\tau - A)d\right) 
			&= (\tau - A)a + Bb - \frac{g(A)}{B}c + Ad.
		\end{align*}
		Because $g(A) = g(\tau - A)$, we see that the above is indeed an element of $\Omega$.
	\end{proof}

	\begin{lem} \label{lem:action-factors-through-SL}
		We have the inclusion $\widehat{G} \subset \SL(V)$ as subgroups of $\GL(V)$.
	\end{lem}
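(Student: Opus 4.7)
The plan is to establish $\widehat{G} \subset \SL(V)$ by showing that $\det \rho(\sigma) = 1$ for every $\sigma \in G$. The natural approach is to decompose $\rho(\sigma) \colon V \to V$, the map $M \mapsto \sigma M \sigma^{-1}$, as a composition $R_{\sigma^{-1}} \circ L_{\sigma}$ of the two commuting operators $L_{\sigma} \colon M \mapsto \sigma M$ (left multiplication) and $R_{\sigma^{-1}} \colon M \mapsto M \sigma^{-1}$ (right multiplication). Since $\det \rho(\sigma) = \det L_{\sigma} \cdot \det R_{\sigma^{-1}}$, it suffices to compute the determinants of these two simpler maps separately.

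For $L_{\sigma}$, observe that under the decomposition $V = \MM_{2}(K) \cong K^{2} \oplus K^{2}$ into its two columns, left multiplication by $\sigma$ acts as $\sigma$ independently on each column. Equivalently, with respect to the column-wise ordered basis $(E_{11}, E_{21}, E_{12}, E_{22})$, the operator $L_{\sigma}$ is represented by the block-diagonal matrix $\diag(\sigma, \sigma) \in \GL_{4}(K)$, so $\det L_{\sigma} = \det(\sigma)^{2}$. An entirely symmetric argument, now decomposing $V$ into its two rows, shows that $R_{\sigma^{-1}}$ is block-diagonal with two copies of $(\sigma^{-1})^{\tr}$ (or $\sigma^{-1}$ acting on row vectors from the right), giving $\det R_{\sigma^{-1}} = \det(\sigma^{-1})^{2} = \det(\sigma)^{-2}$. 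Multiplying, we obtain
\[
\det \rho(\sigma) = \det(\sigma)^{2} \cdot \det(\sigma)^{-2} = 1,
\]
as required.

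There is no genuine obstacle to this argument; the only point requiring any care is keeping the two basis orderings consistent so that the two column-versus-row determinants indeed multiply to the determinant of the composition. I note in passing that the same computation shows the conjugation action of $\GL_{n}(K)$ on $\MM_{n}(K) \cong K^{n^{2}}$ factors through $\SL_{n^{2}}(K)$ for every $n$, so the lemma is really a special case of a general feature of the adjoint representation.
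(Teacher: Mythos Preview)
Your proof is correct and follows essentially the same approach as the paper: both factor $\rho(\sigma)$ as the composition of left multiplication by $\sigma$ and right multiplication by $\sigma^{-1}$, and then reduce to comparing the determinants of these two operators. The only difference is that the paper stops at ``it suffices to show $\det(L(\sigma)) = \det(R(\sigma))$'' and cites a linear-algebra exercise in Hoffman--Kunze, whereas you actually carry out the computation, using the column (resp.\ row) decomposition of $\MM_{2}(K)$ to see that each determinant equals $\det(\sigma)^{2}$.
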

	\begin{proof} 
		We wish to show that $\rho(G) \subset \SL(V)$. 
		The representation $\rho$ is given by $\rho(\sigma)(M) = \sigma M \sigma^{-1}$ for $\sigma \in G$ and $M \in V$.
		Said otherwise, 
		$\rho(\sigma) = L(\sigma) \circ R(\sigma)^{-1}$, 
		where $L(\sigma)$ and $R(\sigma)$ denote left and right multiplication by $\sigma$, respectively. 
		Thus, $\det(\rho(\sigma)) = \det(L(\sigma))/\det(R(\sigma))$ and
		it suffices to show that 
		$\det(L(\sigma)) = \det(R(\sigma))$. 
		This is now a simple linear algebra exercise, see for example,~\Cite[\S5.4 Exercise 12]{HoffmanKunze}.
	\end{proof}

	\begin{lem} \label{lem:G-is-small}
		The action of $G$ contains no pseudoreflections, i.e., $\rank(\rho(\sigma) - \id) \neq 1$ for all $\sigma \in G$.
	\end{lem}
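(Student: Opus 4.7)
The plan is to analyse the kernel of $\rho(\sigma) - \id$ acting on $V = \MM_{2}(K)$ directly, and show its dimension forces the rank to avoid $1$.

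Observe that an element $M \in V$ lies in $\ker(\rho(\sigma) - \id)$ precisely when $\sigma M \sigma^{-1} = M$, i.e., when $M$ commutes with $\sigma$. Thus the kernel of $\rho(\sigma) - \id$ is the centraliser $Z_{V}(\sigma) \subset V$. The main linear-algebraic fact to invoke is that for a $2 \times 2$ matrix $\sigma$ over any field, the centraliser in $\MM_{2}(K)$ has dimension exactly $2$ when $\sigma$ is non-scalar, and dimension $4$ when $\sigma$ is scalar.

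To justify this dimension count, first note that $I$ and $\sigma$ lie in $Z_{V}(\sigma)$; they are $K$-linearly independent exactly when $\sigma$ is non-scalar, so the centraliser has dimension at least $2$ in that case. For the reverse inequality, one can either extend scalars to $\overline{K}$ and do the two-case Jordan form computation (distinct eigenvalues give diagonal centraliser; a single non-scalar Jordan block gives upper-triangular Toeplitz centraliser---each of dimension $2$), or perform a direct computation with $\sigma = \bigl[\begin{smallmatrix} \alpha & \beta \\ \gamma & \delta \end{smallmatrix}\bigr]$ and $M = \bigl[\begin{smallmatrix} A & B \\ C & D \end{smallmatrix}\bigr]$, solving $\sigma M = M \sigma$ to show $M \in KI + K\sigma$ as soon as one of $\beta, \gamma, \alpha - \delta$ is nonzero.

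Putting this together, $\rank(\rho(\sigma) - \id) = \dim_{K}(V) - \dim_{K} Z_{V}(\sigma) = 4 - \dim_{K} Z_{V}(\sigma)$, which is $0$ if $\sigma$ is scalar (and then $\rho(\sigma) = \id$) and $2$ otherwise. In particular, the rank is never $1$, so $\rho(G)$ contains no pseudoreflections. The argument is uniform and requires no case analysis on the characteristic of $K$; the only real ingredient is the non-derogatory nature of non-scalar $2 \times 2$ matrices, which I expect to be the only step requiring any care to write out cleanly.
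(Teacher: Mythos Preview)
Your proposal is correct and follows essentially the same approach as the paper: identify $\ker(\rho(\sigma)-\id)$ with the centraliser of $\sigma$ in $\MM_{2}(K)$, pass to $\overline{K}$ (since the dimension is unchanged), and use the Jordan form to see the centraliser has dimension $2$ or $4$. You add a bit more detail on the lower bound via $I,\sigma$ and offer a direct computation as an alternative, but the argument is the same.
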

	\begin{proof} 
		In view of the rank-nullity theorem, we wish to show that if $\sigma \in G$, then $\ker(\rho(\sigma) - \id)$
		has dimension different from~$3$. 
		The kernel consists precisely of those $M \in \MM_{2}(K)$ that commute with $\sigma$. 
		Because this dimension does not change upon enlarging the base field, we may assume that $\sigma$ is in Jordan form. 
		Considering the possibilities for a $2 \times 2$ Jordan form, one sees that the requisite dimension is either~$2$ or~$4$.
	\end{proof}

	We now analyse the action of $G$ on $V$ via its Sylow-$p$ subgroups. 
	To this end, consider the \emph{unipotent} group 
	\begin{equation} \label{eq:unipotent-defn}
		P \coloneqq 
		\begin{bmatrix}
			1 & K \\
			0 & 1 \\
		\end{bmatrix}
		=
		\left\{
		\begin{bmatrix}
			1 & \alpha \\
			0 & 1 \\
		\end{bmatrix}
		: 
		\alpha \in K
		\right\}.
	\end{equation}

	\begin{lem} \label{lem:P-fixed-codim-2}
		We have $\dim(V^{P}) = 2$,
		equivalently, $\codim(V^{P}) = 2$. 
	\end{lem}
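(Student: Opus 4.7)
The plan is to compute $V^{P}$ directly via the commutator characterisation: since $P$ acts by conjugation, an element $M \in V$ lies in $V^{P}$ if and only if it commutes with every element of $P$. Writing a typical element of $P$ as $u_{\alpha} = I + \alpha E_{12}$ with $\alpha \in K$, the commutation condition $u_{\alpha} M = M u_{\alpha}$ rearranges to $\alpha (E_{12} M - M E_{12}) = 0$ for all $\alpha \in K$. Taking any nonzero $\alpha$ shows that $M \in V^{P}$ is equivalent to the single condition $E_{12} M = M E_{12}$; so the problem reduces to computing the centraliser of $E_{12}$ in $V$, independently of how many generators $P$ has as an abelian $p$-group.

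Writing $M = \begin{bmatrix} a & b \\ c & d \end{bmatrix}$, the two products $E_{12} M$ and $M E_{12}$ are computed in a single line each, and comparing entries forces $c = 0$ and $a = d$. Thus
\begin{equation*}
	V^{P} = \left\{ \begin{bmatrix} a & b \\ 0 & a \end{bmatrix} : a, b \in K \right\} = K \cdot I + K \cdot E_{12},
\end{equation*}
which is a two-dimensional $K$-subspace of $V$. This gives $\dim(V^{P}) = 2$ and, since $\dim V = 4$, also $\codim(V^{P}) = 2$.

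There is essentially no obstacle here; the whole argument amounts to a $2 \times 2$ matrix computation. The only mild subtlety is that when $K = \mathbb{F}_{q}$ with $q = p^{n}$ and $n > 1$, the group $P$ is not cyclic, so one should be a touch careful that commuting with a single generator (say $u_{1}$) really does force commuting with all of $P$; the reduction to the centraliser of $E_{12}$ handles this uniformly, since $E_{12} = \alpha^{-1}(u_{\alpha} - I)$ for any nonzero $\alpha$.
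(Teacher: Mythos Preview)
Your proof is correct and follows essentially the same approach as the paper: both compute $V^{P}$ as the centraliser of the unipotent matrices and find it to be the two-dimensional space of upper-triangular matrices with equal diagonal entries. Your reduction to the single condition $E_{12}M = ME_{12}$ is a slightly cleaner way of phrasing the same computation.
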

	\begin{proof} 
		It is a straightforward computation that if $\alpha \in K^{\times}$, then the matrices commuting with $\smatrix{1 & \alpha \\ 0 & 1}$ are precisely those of the form $\smatrix{A & B \\ 0 & A}$ with $A, B \in K$. 
		In turn, $V^{P}$ is the two-dimensional space $\left\{\smatrix{A & B \\ 0 & A} : A, B \in K\right\}$. 
		The statement about the codimension follows after noting that $\dim(V) = 4$. 
	\end{proof}

	\begin{lem} \label{lem:commutator}
		If $K \neq \mathbb{F}_{2}$, then $\widehat{G}/[\widehat{G}, \widehat{G}] \cong K^{\times}/(K^{\times})^{2}$.
	\end{lem}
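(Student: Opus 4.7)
My plan is to identify the abelianization of $\widehat{G} \cong \PGL_{2}(K)$ via the determinant. Since $\det(\lambda I) = \lambda^{2}$, the ordinary determinant $\det\colon \GL_{2}(K) \to K^{\times}$ sends the kernel $Z$ of the quotient $\GL_{2}(K) \twoheadrightarrow \widehat{G}$ into $(K^{\times})^{2}$, and so descends to a well-defined surjective group homomorphism
\begin{equation*}
	\overline{\det}\colon \widehat{G} \longrightarrow K^{\times}/(K^{\times})^{2}.
\end{equation*}
Since the target is abelian, $[\widehat{G}, \widehat{G}]$ lies inside $\ker(\overline{\det})$, and the claim reduces to the reverse containment.

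Next, because commutators in $\widehat{G}$ are precisely the images of commutators in $\GL_{2}(K)$, the subgroup $[\widehat{G}, \widehat{G}] \le \widehat{G}$ equals the image of $[\GL_{2}(K), \GL_{2}(K)]$. On the other hand, $\ker(\overline{\det})$ equals the image in $\widehat{G}$ of $\SL_{2}(K) \cdot Z$. Thus it suffices to prove the classical fact that
\begin{equation*}
	[\GL_{2}(K), \GL_{2}(K)] = \SL_{2}(K) \qquad \text{whenever } K \neq \mathbb{F}_{2}.
\end{equation*}
The inclusion $\subseteq$ is immediate from the multiplicativity of the determinant.

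For the inclusion $\supseteq$, I would use that $\SL_{2}(K)$ is generated by the elementary transvections $E_{12}(t) \coloneqq \smatrix{1 & t \\ 0 & 1}$ and $E_{21}(t) \coloneqq \smatrix{1 & 0 \\ t & 1}$ for $t \in K$, and exhibit each such transvection as a commutator in $\GL_{2}(K)$. Explicitly, for any $\alpha \in K^{\times}$ one computes the identity
\begin{equation*}
	\bigl[\diag(\alpha, 1),\, E_{12}(s)\bigr] = E_{12}\!\bigl((\alpha - 1)s\bigr),
\end{equation*}
so as long as there is some $\alpha \in K^{\times}$ with $\alpha \neq 1$, every $E_{12}(t)$ arises as a commutator by taking $s = t/(\alpha - 1)$; the transvections $E_{21}(t)$ are handled analogously. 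The hypothesis $K \neq \mathbb{F}_{2}$ is precisely what makes such an $\alpha$ available. Combining these steps, $\overline{\det}$ induces the asserted isomorphism $\widehat{G}/[\widehat{G}, \widehat{G}] \cong K^{\times}/(K^{\times})^{2}$.

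The main obstacle here is really a bookkeeping one rather than a deep difficulty: one must check that the transvection generation of $\SL_{2}(K)$ together with the commutator identity above is robust enough to cover the small-field case $K = \mathbb{F}_{3}$ (where the only allowed choice is $\alpha = -1$), and one must be careful that the descent $\GL_{2} \twoheadrightarrow \PGL_{2}$ correctly identifies commutator subgroups. Both are routine once the scheme above is in place.
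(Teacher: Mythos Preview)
Your argument is correct and follows essentially the same route as the paper: both identify $\widehat{G}/[\widehat{G},\widehat{G}]$ via the determinant map $\PGL_{2}(K) \to K^{\times}/(K^{\times})^{2}$ and reduce to the classical fact $[\GL_{2}(K),\GL_{2}(K)] = \SL_{2}(K)$ for $K \neq \mathbb{F}_{2}$. The only difference is that the paper cites this fact (and phrases the conclusion via the exact sequence $1 \to \PSL_{2} \to \PGL_{2} \xrightarrow{\det} K^{\times}/(K^{\times})^{2} \to 1$), whereas you supply the standard transvection-as-commutator argument directly.
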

	\begin{proof} 
		The commutator subgroup $[\GL_{n}(K), \GL_{n}(K)]$ is equal to $\SL_{n}(K)$ except when $n = 2$ and $K = \mathbb{F}_{2}$; 
		see~\Cite{Thompson:Commutators} or~\Cite[\S6.7]{Jacobson:BAI}.
		In turn, we get $[\PGL_{n}(K), \PGL_{n}(K)] = \PSL_{n}(K)$.
		The result now follows from 
		the fact that $\widehat{G} \cong \PGL_{2}(K)$
		and 
		the exact sequence of groups 
		\begin{equation*} 
			1 \to \PSL_{n}(K) \into \PGL_{n}(K) \xrightarrow{\det} K^{\times}/(K^{\times})^{n} \to 0. \qedhere
		\end{equation*}
	\end{proof}

\section{The primary invariants} \label{sec:primary-invariants}
	
	We now describe a set of primary invariants for the conjugation action.
	Two natural candidates to start with are the trace and determinant. 
	As we are working over finite fields, we may use the \emph{Steenrod operations} $\mathcal{P}$ (see \Cref{sec:steenrod}) to produce a third new invariant from these.
	The fourth invariant is a certain orbit product.
	
	\begin{defn} \label{defn:primary-invariants-description}
		The elements $f_{1}, f_{2}, f_{3}, f_{4} \in S$ are defined as
		\begin{equation} \label{eq:primary-invariants-description}
			\begin{aligned}
				f_{1} &\coloneqq a + d, \\
				f_{2} &\coloneqq a d - b c, \\
				f_{3} &\coloneqq a^{q} d + a d^{q} - b^{q} c - b c^{q}, \\
				f_{4} &\coloneqq 
				\prod_{\substack{A \in K \\ B \in K^{\times}}} 
				\left(A a + B b - \frac{g(A)}{B} c + (\tau - A) d\right). \\
			\end{aligned}
		\end{equation}
	\end{defn}

	The elements above are readily seen to be homogeneous of degrees $1$, $2$, $q + 1$, and $q^{2} - q$, respectively.
	We set $R \coloneqq K[f_{1}, f_{2}, f_{3}, f_{4}] \subset S$.
	We will show that $R$ is a Noether normalisation for $S^{G}$ and that we have a decomposition of the form $S^{G} = R \oplus R h$. 
	In particular, $S^{G}$ is a hypersurface. 
	Along the way, we will also show that $R = S^{\Gamma}$, i.e., $R$ is itself a ring of invariants. 
	This equality shows that while $f_{4}$ depends on the choice of the irreducible quadratic $g(x)$, the Noether normalisation $R$ does not.

	\begin{prop} \label{prop:primary-invariants-are-invariant}
		We have $R \subset S^{\Gamma} \subset S^{G}$.
	\end{prop}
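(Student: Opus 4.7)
The second inclusion $S^{\Gamma} \subset S^{G}$ is immediate from the definition $\Gamma \supset \widehat{G}$ together with $S^{G} = S^{\widehat{G}}$. So the whole content is showing that each of the four generators of $R$ lies in $S^{\Gamma}$, i.e.\ is fixed by every element of $\widehat{G}$ and by $\tau_{ad}$.

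I would dispatch the four invariants in turn. For $f_{1} = \trace$ and $f_{2} = \det$: these are fixed by the conjugation action $X \mapsto \sigma^{-1} X \sigma$ by the usual identities $\trace(\sigma^{-1}X\sigma) = \trace(X)$ and $\det(\sigma^{-1}X\sigma) = \det(X)$; invariance under $\tau_{ad}$ is immediate since swapping $a \leftrightarrow d$ preserves $a+d$ and $ad-bc$.

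For $f_{3}$: the plan is to invoke the naturality of the Steenrod operation $\mathcal{P}^{1}$, which I would quote from \Cref{sec:steenrod}. The operation $\mathcal{P}^{1}$ commutes with every $K$-algebra endomorphism of $S$ induced by a linear change of variables, in particular with the action of any element of $\GL(V)$. Since $f_{2}$ is $G$-invariant, applying $\mathcal{P}^{1}$ shows $f_{3} = \mathcal{P}^{1}(f_{2})$ is $G$-invariant as well; equivalently, $\widehat{G}$-invariant. Invariance under $\tau_{ad}$ is visible from the formula $f_{3} = a^{q}d + a d^{q} - b^{q}c - b c^{q}$, which is manifestly symmetric in $a$ and $d$.

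Finally, $f_{4}$ is the orbit product. By \Cref{cor:orbit-irreducible-quadraic}, the set $\Omega$ of linear forms appearing in the product is a single $G$-orbit and is stable under the larger group $\Gamma$. Since every element of $\Gamma$ acts as a $K$-linear automorphism on the space of homogeneous linear polynomials and $\Omega$ is finite, stability forces each $\gamma \in \Gamma$ to \emph{permute} $\Omega$ rather than merely send it into itself. Hence
\begin{equation*}
    \gamma(f_{4}) = \prod_{\omega \in \Omega} \gamma(\omega) = \prod_{\omega' \in \Omega} \omega' = f_{4},
\end{equation*}
so $f_{4} \in S^{\Gamma}$. Combining the four cases and using that $S^{\Gamma}$ is a $K$-subalgebra of $S$, we conclude $R = K[f_{1}, f_{2}, f_{3}, f_{4}] \subset S^{\Gamma}$. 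The only step requiring external input is the naturality of $\mathcal{P}^{1}$ for $f_{3}$; the rest is a direct inspection of the definitions together with \Cref{cor:orbit-irreducible-quadraic}.
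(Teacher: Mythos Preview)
Your proof is correct and follows essentially the same approach as the paper: trace and determinant are handled by the standard conjugation identities plus symmetry in $a,d$, $f_{3}$ by the naturality of $\mathcal{P}^{1}$ (the paper applies \Cref{lem:steenrod-invariant} directly for $\Gamma$ rather than splitting into $\widehat{G}$ and $\tau_{ad}$, but this is immaterial), and $f_{4}$ via \Cref{cor:orbit-irreducible-quadraic}. Your explicit justification of the second inclusion and of the fact that $\Gamma$ permutes $\Omega$ are details the paper leaves implicit.
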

	\begin{proof} 
		The elements $f_{1}$ and $f_{2}$ are the trace and determinant of $\smatrix{a & b \\ c & d}$ and are hence invariant under conjugation. 
		These are also symmetric in $a$ and $d$ and hence $\Gamma$-invariant.
		The element $f_{3}$ is $\mathcal{P}^{1}(f_{2})$ and hence is $\Gamma$-invariant in view of \Cref{lem:steenrod-invariant}. 
		The invariance of $f_{4}$ follows from \Cref{cor:orbit-irreducible-quadraic}. 
	\end{proof}

	We do our only gritty calculation below to show that the $f_{i}$ form a homogeneous system of parameters for $S$; 
	in particular, $R$ is a Noether normalisation for each of $S^{G}$ and $S^{\Gamma}$. 

	\begin{thm} \label{thm:primary-invariants}
		The invariants $f_{1}, f_{2}, f_{3}, f_{4}$ 
		form a homogeneous system of parameters for $S$.
	\end{thm}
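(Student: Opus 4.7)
The plan is to verify condition (b) in the characterisation of primary invariants recalled in the Introduction: the common vanishing locus of $f_{1}, f_{2}, f_{3}, f_{4}$ in $\overline{K}^{4}$ reduces to the origin. Identifying a point $(a, b, c, d) \in \overline{K}^{4}$ with the matrix $M = \bgeneric$, we have $f_{1}(M) = \trace(M)$ and $f_{2}(M) = \det(M)$, so the vanishing of $f_{1}$ and $f_{2}$ forces $M$ to be nilpotent; equivalently $d = -a$ and $bc = -a^{2}$.

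If $a = 0$, then $bc = 0$, and only the sub-cases $(0, 0, c, 0)$ and $(0, b, 0, 0)$ remain. In each, $f_{4}$ is a product of linear forms whose direct evaluation yields a nonzero scalar multiple of $c^{q(q - 1)}$ or $b^{q(q - 1)}$ respectively, using $g(A) \neq 0$ for every $A \in K$. Hence $f_{4} = 0$ kills the remaining coordinate and $M = 0$.

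Assume now $a \neq 0$. The key calculation is that, after substituting $d = -a$ and $c = -a^{2}/b$ into a typical factor of $f_{4}$ and setting $w = Bb/a$, the factor becomes, up to a nonzero scalar, $w^{2} + (2A - \tau) w + g(A) = g(w + A)$. Letting $\mu, \nu \in \mathbb{F}_{q^{2}}$ be the roots of $g$, the vanishing of some factor of $f_{4}$ therefore forces $b/a = (\mu - A)/B$ (or $(\nu - A)/B$, handled symmetrically) for some $A \in K$, $B \in K^{\times}$. In particular $c = -aB/(\mu - A)$, and using $B^{q - 1} = 1$ and $(-1)^{q - 1} = 1$ a short computation gives $b^{q - 1} + c^{q - 1} = a^{q - 1}\bigl[(\mu - A)^{q - 1} + (\mu - A)^{-(q - 1)}\bigr]$. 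Setting $y = (\mu - A)^{q - 1}$, the equation $f_{3} = 0$ then rearranges to $y + y^{-1} = 2$ in odd characteristic (forcing $y = 1$), and to $y^{2} = 1$ in characteristic $2$ (again forcing $y = 1$ by injectivity of Frobenius). But $(\mu - A)^{q - 1} = 1$ means $\mu - A \in \mathbb{F}_{q}^{\times}$, so $\mu \in K$, contradicting the irreducibility of $g$.

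The main obstacle is the identification of each factor of $f_{4}$ with a shift $g(w + A)$ of $g$; this brings the splitting field of $g$ into the picture and lets $f_{3}$ force the contradiction $\mu \in K$. The treatment of the case $a = 0$ and the characteristic-$2$ bookkeeping are then routine.
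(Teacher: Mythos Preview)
Your argument is correct and follows essentially the same route as the paper's proof: both reduce to the common vanishing locus, use $f_{1}=0$ to set $d=-a$, exploit the linear factorisation of $f_{4}$ together with the roots of $g$ over $\mathbb{F}_{q^{2}}$, and then let $f_{3}=0$ force a root of $g$ into $K$, yielding the contradiction. The only cosmetic difference is organisational---you eliminate $c$ via $f_{2}$ first and split off the case $a=0$, whereas the paper eliminates $b$ via $f_{4}$ first and parametrises everything by $c$, avoiding the case split; the key identity you isolate, that each factor of $f_{4}$ becomes $\frac{a}{w}\,g(w+A)$ with $w=Bb/a$, is exactly the paper's factorisation of the quadratic in disguise.
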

	\begin{proof} 
		It suffices to show that the only solution in $\overline{K}^{4}$ to $f_{1} = f_{2} = f_{3} = f_{4} = 0$ is the origin. 
		Let $(a, b, c, d) \in \overline{K}^{4}$ be such a common solution. 
		We immediately discard the equation $f_{1} = 0$ by substituting $d = -a$ in the other equations. 
		The equation $f_{4} = 0$ tells us that there exist $A \in K$, $B \in K^{\times}$ such that
		\begin{equation} \label{eq:b-in-terms-a-c}
			b = \frac{\tau - 2A}{B}a + \frac{g(A)}{B^{2}}c.
		\end{equation}

		After the substitution $d = -a$, the equation $f_{2} = 0$ gives us $a^{2} + b c = 0$. Using~\Cref{eq:b-in-terms-a-c}, we obtain
		\begin{equation*} 
			a^{2} + \frac{\tau - 2A}{B} ac + \frac{g(A)}{B^{2}} c^{2} = 0.
		\end{equation*}

		Writing $g(A) = A^{2} - \tau A + \delta$, the above can be rearranged to get
		\begin{equation*} 
			\left(\frac{A}{B}c - a\right)^{2} 
			- \tau \left(\frac{A}{B}c - a\right)\left(\frac{c}{B}\right) 
			+ \delta \left(\frac{c}{B}\right)^{2} = 0.
		\end{equation*}	

		We may factor $g(x)$ over $\overline{K}$ to get $g(x) = (x + \mu)(x + \nu)$ for some $\mu, \nu \in \overline{K} \mysetminus K$. 
		In turn, the above factors as
		\begin{equation*} 
			\left(\frac{A}{B}c - a + \mu \frac{c}{B}\right)
			\left(\frac{A}{B}c - a + \nu \frac{c}{B}\right)
			= 0.
		\end{equation*}

		Without loss of generality, we may assume that the first factor is zero, giving us
		\begin{equation*} 
			a = \frac{A + \mu}{B}c.
		\end{equation*}

		Substituting this in~\Cref{eq:b-in-terms-a-c} and using $\mu + \nu = -\tau$ gives us
		\begin{align*} 
			b 
			&= \frac{\tau - 2A}{B}\cdot\frac{A + \mu}{B}c + \frac{g(A)}{B^{2}}c \\
			&= \frac{-\mu - \nu - 2A}{B}\cdot\frac{A + \mu}{B}c + \frac{(A + \mu)(A + \nu)}{B^{2}} c \\
			&= -\left(\frac{A + \mu}{B}\right)^{2}c.
		\end{align*}
		Letting $\gamma \coloneqq (A + \mu)/B$, we see that 
		\begin{equation} \label{eq:a-b-in-terms-of-c}
			a = \gamma c \andd b = -\gamma^{2} c.
		\end{equation}
		Note that $\mu \notin K$, and hence $\gamma \notin K$ as well. 

		We now substitute the above in $f_{3} = 0$. We get
		\begin{equation*} 
			(-2\gamma^{q + 1} + \gamma^{2q} + \gamma^{2}) c^{q + 1} = 0.
		\end{equation*}
		
		The above factors as
		\begin{equation*} 
			(\gamma^{q} - \gamma)^{2} \cdot c^{q + 1} = 0.
		\end{equation*}
		The first term is nonzero because $\gamma \notin K$, and hence $\gamma$ cannot be a root of the polynomial $x^{q} - x$. Thus, we get $c = 0$ and in view of~\Cref{eq:a-b-in-terms-of-c}, we are done.
	\end{proof}

\section{Homological properties} \label{sec:homological-properties}
	
	In this section, 
	we show that $S^{\Gamma}$ is equal to the Noether normalisation $R$ defined earlier, and that
	$S^{G}$ is a hypersurface that decomposes as $R \oplus R \eta$. 
	We begin by showing that $S^{G}$ is a Cohen--Macaulay ring.

	\begin{prop} \label{prop:SG-cohen-macaulay}
		The ring $S^{G}$ is Cohen--Macaulay.
	\end{prop}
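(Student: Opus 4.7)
The plan is to pass through a Sylow $p$-subgroup via a standard transfer argument, exploiting the factorisation $\mdd{\widehat{G}} = q(q^{2} - 1)$ in which the prime-to-$p$ part $q^{2} - 1$ is a unit in $K$. The key input will be the Ellingsrud--Skjelbred theorem: if a finite $p$-group $P$ acts linearly on a polynomial ring in characteristic $p$ with $\codim V^{P} \le 2$, then the ring of invariants is Cohen--Macaulay.

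First I would take $P \le \widehat{G}$ to be the image of the unipotent subgroup defined in~\Cref{eq:unipotent-defn}. Since the only scalar matrix contained in $P$ is the identity, this embeds $P$ as a subgroup of order $q$ in $\widehat{G} \cong \PGL_{2}(K)$, realising it as a Sylow $p$-subgroup. By \Cref{lem:P-fixed-codim-2} one has $\codim V^{P} = 2$, so the Ellingsrud--Skjelbred theorem (see~\Cite[Theorem 3.9.4]{DerksenKemper}) directly yields that $S^{P}$ is Cohen--Macaulay.

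Next, because $[\widehat{G} : P] = q^{2} - 1$ is invertible in $K$, the averaging map
\[
\pi \colon S^{P} \to S^{\widehat{G}}, \qquad f \mapsto \frac{1}{q^{2} - 1} \sum_{gP \,\in\, \widehat{G}/P} g \cdot f
\]
is a well-defined $S^{\widehat{G}}$-linear retraction of the inclusion $S^{\widehat{G}} \into S^{P}$. Thus $S^{G} = S^{\widehat{G}}$ is a direct summand of $S^{P}$ as an $S^{G}$-module. Since $P$ is finite, $S^{P}$ is module-finite over $S^{G}$, so both rings have Krull dimension $4$, and $S^{P}$ is a Cohen--Macaulay $S^{G}$-module. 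A direct summand of a Cohen--Macaulay module of the same Krull dimension is itself Cohen--Macaulay (this is immediate from the fact that depth over a local ring is additive on direct sums), which gives the conclusion.

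The main obstacle is really only one of \emph{identification}: once one recognises that the unipotent subgroup witnesses the codimension-$2$ hypothesis and that its index in $\widehat{G}$ is coprime to $p$, the proof reduces to citing Ellingsrud--Skjelbred and running the standard transfer. No further structural analysis of $\widehat{G}$ or of its fixed subspaces beyond what is recorded in \Cref{sec:facts-conjugation} is needed.
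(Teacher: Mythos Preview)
Your proposal is correct and follows essentially the same route as the paper: pass to the unipotent Sylow $p$-subgroup, invoke the Ellingsrud--Skjelbred criterion via \Cref{lem:P-fixed-codim-2} to get $S^{P}$ Cohen--Macaulay, and then split off $S^{G}$ using the relative transfer since the index is prime to $p$. The only cosmetic difference is that you work inside $\widehat{G}$ (index $q^{2}-1$) while the paper works inside $G$ (index $(q^{2}-1)(q-1)$); either way the index is a unit in $K$, and the argument is the same.
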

	\begin{proof} 
		Recall the unipotent subgroup $P \coloneqq \smatrix{1 & K \\ 0 & 1}$. 
		By \Cref{lem:P-fixed-codim-2}, we have $\codim(V^{P}) = 2$, and in turn, $S^{P}$ is Cohen--Macaulay by~\Cite[Theorem 3.9.2]{CampbellWehlau:ModularInvariantTheory}. 
		Note that $[G : P] = (q^{2} - 1)(q - 1)$ is invertible in $K$, and thus the inclusion $S^{G} \into S^{P}$ splits. 
		Indeed, the \emph{relative transfer} map
		\begin{align*} 
			S^{P} &\to S^{G} \\
			s &\mapsto \frac{1}{[G : P]} \sum_{\sigma P \in G/P} \sigma(s)
		\end{align*}
		defines an $S^{G}$-linear splitting.
		Because this is a finite extension, we get that $S^{G}$ is Cohen--Macaulay.
	\end{proof}

	\begin{por} \label{por:conjugation-invariants-CM}
		Let $H \le \GL_{2}(K)$ be any subgroup acting via conjugation on $S$. 
		Then, $S^{H}$ is Cohen--Macaulay.
	\end{por}
	\begin{proof} 
		Let $Q$ be a Sylow-$p$ subgroup of $H$. 
		It suffices to show that $S^{Q}$ is Cohen--Macaulay. 
		Upon conjugation, we may assume that $Q \le P$. 
		But then, $\codim(V^{Q}) \le \codim(V^{P}) = 2$, and thus $S^{Q}$ is Cohen--Macaulay. 
	\end{proof}

	\begin{prop} \label{prop:hypersurface}
		The ring $S^{G}$ is a hypersurface. 
		Specifically, if $R$ is the Noether normalisation as before, then there exists an invariant $\eta \in S^{G}$ such that 
		$S^{G} = R \oplus R \eta$ as $R$-modules,
		and hence
		$S^{G} = K[f_{1}, f_{2}, f_{3}, f_{4}, \eta]$ as $K$-algebras. 
		In particular,
		\begin{equation} \label{eq:hilbert-series-hypersurface}
			\Hilb(S^{G}, z) = 
			\frac{1 + z^{\deg \eta}}
			{(1 - z)(1 - z^{2})(1 - z^{q + 1})(1 - z^{q^{2} - q})}.
		\end{equation}
	\end{prop}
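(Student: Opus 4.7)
The plan is to deduce a free $R$-module decomposition of $S^{G}$ from Cohen--Macaulayness together with a rank count, and then read off both the hypersurface property and the Hilbert series formally.

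Since $R$ is a polynomial ring (hence regular) and $S^{G}$ is a finitely generated $R$-module by \Cref{thm:primary-invariants}, the Auslander--Buchsbaum formula combined with Cohen--Macaulayness of $S^{G}$ from \Cref{prop:SG-cohen-macaulay} yields that $S^{G}$ is a free $R$-module. To pin down its rank, I would compare the tower $R \subset S^{G} \subset S$. Since $S$ is itself Cohen--Macaulay and the $f_{i}$ form a homogeneous system of parameters for $S$ by \Cref{thm:primary-invariants}, they form a regular sequence on $S$, and so $S$ is a free $R$-module of rank $\prod_{i} \deg(f_{i}) = 1 \cdot 2 \cdot (q + 1) \cdot (q^{2} - q) = 2q(q^{2} - 1) = 2\mdd{\widehat{G}}$. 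On the other hand, since $\widehat{G}$ acts faithfully on $S$, the extension of fraction fields has degree $[\operatorname{Frac}(S) : \operatorname{Frac}(S^{G})] = \mdd{\widehat{G}}$, which is the generic rank of $S$ over $S^{G}$. By multiplicativity of generic ranks along this tower, $\operatorname{rank}_{R}(S^{G}) = 2$.

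Next, I would choose a homogeneous element $\eta \in S^{G}$ so that the images of $1$ and $\eta$ form a $K$-basis of $S^{G}/(f_{1}, f_{2}, f_{3}, f_{4}) S^{G}$; the graded Nakayama lemma then lifts $\{1, \eta\}$ to a free $R$-module basis of $S^{G}$, giving the decomposition $S^{G} = R \oplus R\eta$. Writing $\eta^{2} = \alpha + \beta \eta$ with unique $\alpha, \beta \in R$, the $K$-algebra surjection $R[T] \to S^{G}$ sending $T \mapsto \eta$ has kernel the principal ideal $(T^{2} - \beta T - \alpha)$, exhibiting $S^{G}$ as a hypersurface. The Hilbert series formula \Cref{eq:hilbert-series-hypersurface} is then immediate from the direct sum decomposition and the standard expression for the Hilbert series of the polynomial ring $R$ in generators of degrees $1, 2, q + 1, q^{2} - q$.

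The main content is the rank count; once this is in hand, the rest is essentially formal. I do not expect a substantive obstacle, as the nontrivial ingredients---the homogeneous system of parameters property and Cohen--Macaulayness of $S^{G}$---are already in place from the preceding sections.
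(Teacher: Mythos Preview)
Your argument is correct and is essentially the same as the paper's: both use Cohen--Macaulayness of $S^{G}$ over the polynomial ring $R$ to obtain freeness, and then count the rank as $\prod_{i}\deg(f_{i})/\mdd{\widehat{G}} = 2$. The only difference is cosmetic---the paper cites \Cite[Theorem 3.7.1]{DerksenKemper} for this count of secondary invariants, whereas you unpack the same computation via Auslander--Buchsbaum and the tower $R \subset S^{G} \subset S$.
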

	\begin{proof} 
		Because $S^{G}$ is Cohen--Macaulay,
		we may use~\Cite[Theorem 3.7.1]{DerksenKemper} to determine the number of minimal secondary invariants (with respect to the $f_{i}$) as
		\begin{equation*} 
			\frac{\prod_{i = 1}^{4} \deg(f_{i})}{\mdd{\widehat{G}}} = \frac{1 \cdot 2 \cdot (q + 1) \cdot (q^{2} - q)}{q (q^{2} - 1)} = 2,
		\end{equation*}
		where $\widehat{G}$ is the image of $\rho$ defined in~\Cref{eq:representation-rho}. 
		As $1 \in S^{G}$ is always a minimal secondary invariant, the other secondary invariant is the $\eta$ as in the statement. 
		The Hilbert series follows by our knowledge of the degrees of the $f_{i}$ and the fact that $R$ is a polynomial algebra on the $f_{i}$.
	\end{proof}

	\begin{cor} \label{cor:R-is-S-Gamma}
		We have the equality $R = S^{\Gamma}$. 
		In particular, $S^{\Gamma}$ is a polynomial ring, and $R$ is independent of the choice of the irreducible quadratic $g(x)$.
	\end{cor}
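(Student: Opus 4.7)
The plan is to pin down $S^{\Gamma}$ as a rank-one $R$-module via a degree count, then exploit the integral closedness of $R$ to conclude equality. First, one checks that $\tau_{ad} \notin \widehat{G}$: if we had $\rho(\sigma) = \tau_{ad}$ for some $\sigma \in G$, then $\sigma$ would centralise $E_{12}$, forcing $\sigma = \smatrix{\alpha & \beta \\ 0 & \alpha}$, and a quick computation shows no such $\sigma$ satisfies $\sigma E_{11} \sigma^{-1} = E_{22}$. Hence $\mdd{\Gamma} = 2 \mdd{\widehat{G}} = 2q(q^{2} - 1)$, which equals $\prod_{i} \deg(f_{i}) = 1 \cdot 2 \cdot (q+1) \cdot (q^{2} - q)$.

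Now $\Gamma \le \GL(V)$ acts faithfully on $V$ by definition, and hence on $S = \Sym(V^{\ast})$. Standard invariant theory therefore yields $\operatorname{Frac}(S^{\Gamma}) = \operatorname{Frac}(S)^{\Gamma}$ and $[\operatorname{Frac}(S) : \operatorname{Frac}(S^{\Gamma})] = \mdd{\Gamma}$. Meanwhile, because the $f_{i}$ form a homogeneous system of parameters in $S$ (\Cref{thm:primary-invariants}), they are a regular sequence, so $S$ is a finite free $R$-module of rank $\prod_{i} \deg(f_{i})$; this is visible from evaluating the Hilbert series identity $\Hilb(S) = \Hilb(R) \cdot \Hilb(S/(f_{1}, f_{2}, f_{3}, f_{4}))$ at $z = 1$. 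Consequently $[\operatorname{Frac}(S) : \operatorname{Frac}(R)] = \mdd{\Gamma}$ as well, which combined with the inclusion $\operatorname{Frac}(R) \subset \operatorname{Frac}(S^{\Gamma})$ forces $\operatorname{Frac}(S^{\Gamma}) = \operatorname{Frac}(R)$.

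We now have $R \subset S^{\Gamma} \subset \operatorname{Frac}(R)$, with $S^{\Gamma}$ integral over $R$ as a submodule of the finite $R$-module $S$. Since $R$ is a polynomial ring, it is integrally closed in its fraction field, and so $S^{\Gamma} \subset R$, completing the equality. The polynomiality of $S^{\Gamma}$ and its independence from the choice of $g(x)$ are immediate consequences, as the left-hand side manifestly does not depend on $g$. The only subtlety is the group-order bookkeeping $\mdd{\Gamma} = \prod_{i} \deg(f_{i})$; everything else is routine commutative algebra.
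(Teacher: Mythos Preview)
Your proof is correct and takes a genuinely different route from the paper's. The paper argues via the intermediate ring $S^{G}$: having already established in \Cref{prop:hypersurface} that $S^{G}$ is free of rank two over $R$ (which in turn rests on the Cohen--Macaulayness of $S^{G}$), it observes the chain $R \subset S^{\Gamma} \subsetneq S^{G}$, with strictness on the right because $\tau_{ad}$ is a pseudoreflection while $\widehat{G}$ contains none (\Cref{lem:G-is-small}); degree two then forces $R = S^{\Gamma}$. Your argument instead bypasses $S^{G}$ entirely: you match the fraction-field degree $[\operatorname{Frac}(S) : \operatorname{Frac}(R)] = \prod_{i} \deg(f_{i})$ against $[\operatorname{Frac}(S) : \operatorname{Frac}(S^{\Gamma})] = \mdd{\Gamma}$, then invoke integral closedness of the polynomial ring $R$. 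This is more self-contained in that it avoids the Cohen--Macaulay machinery and the structure of $S^{G}$; the paper's route, on the other hand, falls out for free from results it needs anyway.

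One small point to tighten: the step ``Hence $\mdd{\Gamma} = 2\mdd{\widehat{G}}$'' does not follow from $\tau_{ad} \notin \widehat{G}$ alone; one also needs that $\tau_{ad}$ normalises $\widehat{G}$. This is true---writing $\tau_{ad}(M) = J M^{\tr} J$ with $J = \smatrix{0 & 1 \\ 1 & 0}$, one checks $\tau_{ad}\,\rho(\sigma)\,\tau_{ad} = \rho\bigl((J\sigma^{\tr}J)^{-1}\bigr)$---but it deserves a sentence. Alternatively, and more in the spirit of your argument, you only need the inequality $\mdd{\Gamma} \ge 2\mdd{\widehat{G}}$, which is immediate from Lagrange once $\widehat{G} \subsetneq \Gamma$; the reverse bound $\mdd{\Gamma} \le [\operatorname{Frac}(S) : \operatorname{Frac}(R)]$ is automatic from $R \subset S^{\Gamma}$, and the sandwich closes.
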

	\begin{proof} 
		We have the integral extensions of normal domains $R \subset S^{\Gamma} \subsetneq S^{G}$; 
		to see that the latter inclusion is strict, note that $\Gamma \neq \widehat{G}$ because $\widehat{G}$ contains no pseudoreflections (\Cref{lem:G-is-small}) whereas $\Gamma$ contains the pseudoreflection $\tau_{ad}$.
		By \Cref{prop:hypersurface}, the degree of the extension $R \subset S^{G}$ is two, forcing $R = S^{\Gamma}$.
	\end{proof}

	We now calculate the $a$-invariant of $S^{G}$ and use it to determine the ring of invariants. 
	For an introduction to the $a$-invariant, we refer the reader to~\Cite{GotoWatanabe:GradedI, BrunsHerzog:a-invariants}. 
	For a graded Cohen--Macaulay ring, the $a$-invariant is simply the degree of the Hilbert series.

	\begin{prop} \label{prop:a-invariant-invariant}
		We have $a(S^{G}) = a(S) = -4$.
	\end{prop}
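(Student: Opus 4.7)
The assertion $a(S) = -4$ is immediate: $S = K[a, b, c, d]$ is a polynomial ring in four variables of degree $1$, giving $\Hilb(S, z) = (1-z)^{-4}$, whose degree is $-4$, and $S$ is Cohen--Macaulay.

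For $a(S^G) = -4$, the plan is to combine Lemma 2.3 (which gives $\widehat{G} \subset \SL(V)$) with Proposition 3.2 (which gives that $S^G$ is Cohen--Macaulay), and invoke the modular strengthening of Watanabe's theorem due to Broer: if a finite group $H$ acts faithfully and linearly on $V$ with $H \subset \SL(V)$ and $S^H$ Cohen--Macaulay, then $S^H$ is Gorenstein with $a(S^H) = a(S) = -\dim V$. Applied to $H = \widehat{G}$, this yields $a(S^G) = -4$.

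The conceptual content is that elements of $\SL(V)$ preserve the standard volume form, so they act trivially on the canonical module $\omega_S \cong S(-4)$; this would pass directly to $\omega_{S^G} \cong S^G(-4)$ by taking invariants in the non-modular case, and the modular case requires only the Cohen--Macaulay hypothesis that we have already secured via the transfer from the Sylow $p$-subgroup $P$ introduced in Proposition 3.2. The main obstacle is citing the appropriate modular statement precisely; if one prefers not to use it as a black box, one can alternatively compute $\omega_{S^G}$ directly from the decomposition $S^G = R \oplus R\eta$ in Proposition 3.3 together with duality for the finite graded extension $R \hookrightarrow S^G$ of Cohen--Macaulay rings, though identifying the correct shift then requires an additional argument using the $\SL$ condition.
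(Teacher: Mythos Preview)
Your approach has the right shape, but the black-box you invoke is misstated. The claim that $H \subset \SL(V)$ together with $S^{H}$ Cohen--Macaulay forces $a(S^{H}) = -\dim V$ is false in the modular setting: take $H = \left\langle \smatrix{1 & 1 \\ 0 & 1} \right\rangle \cong \mathbb{Z}/p$ inside $\SL_{2}(\mathbb{F}_{p})$. Here $H \subset \SL(V)$ and $S^{H}$ is a polynomial ring on generators of degrees $1$ and $p$ (hence Cohen--Macaulay, even Gorenstein), yet $a(S^{H}) = -(1+p) \neq -2$. The problem is that $H$ is generated by a transvection---a pseudoreflection of determinant one---and the modular results identifying $\omega_{S^{H}}$ with $S^{H}(-\dim V)$ require the additional hypothesis that $H$ contain no pseudoreflections. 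Your heuristic that taking invariants of $\omega_{S}$ yields $\omega_{S^{H}}$ breaks down exactly at the ramification contributed by pseudoreflections; Cohen--Macaulayness alone does not repair this.

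The paper's proof supplies precisely this missing ingredient: it combines $\widehat{G} \subset \SL(V)$ from \Cref{lem:action-factors-through-SL} with the absence of pseudoreflections from \Cref{lem:G-is-small}, and then cites Goel--Jeffries--Singh. Once you add the pseudoreflection-free hypothesis, your argument becomes correct and essentially coincides with the paper's. (Incidentally, your cross-references are shifted: the $\SL$ inclusion is \Cref{lem:action-factors-through-SL}, and the Cohen--Macaulay statement is \Cref{prop:SG-cohen-macaulay}; the latter is not actually needed at this step, though it is used immediately afterward to identify $a(S^{G})$ with $\deg \Hilb(S^{G})$.)
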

	\begin{proof} 
		By \Cref{lem:action-factors-through-SL,lem:G-is-small}, we know that $\widehat{G}$ is a subgroup of $\SL(V)$ that contains no pseudoreflections.
		This result now follows from~\Cite[Theorem 4.4]{GoelJeffriesSingh}.
	\end{proof}

	\begin{por} \label{por:conjugation-invariants-a-invariant}
		Let $H \le \GL_{2}(K)$ be any subgroup acting via conjugation on $S$. 
		Then, $a(S^{H}) = a(S) = -4$. 
		\hfill $\square$
	\end{por}

	\begin{cor} \label{cor:hilbert-series-SG}
		The Hilbert series of $S^{G}$ is given as
		\begin{equation} \label{eq:hilbert-series-SG}
			\Hilb(S^{G}, z) = \frac{1 + z^{q^{2}}}{(1 - z)(1 - z^{2})(1 - z^{q + 1})(1 - z^{q^{2} - q})}.
		\end{equation}
	\end{cor}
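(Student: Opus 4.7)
The plan is to combine \Cref{prop:hypersurface} and \Cref{prop:a-invariant-invariant} to pin down the degree of $\eta$, and then substitute into the formula from \Cref{prop:hypersurface}. Since $S^{G}$ is Cohen--Macaulay by \Cref{prop:SG-cohen-macaulay}, its $a$-invariant equals the degree of its Hilbert series as a rational function, so the plan reduces to a single arithmetic step.

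Concretely, I would take the expression
\begin{equation*}
\Hilb(S^{G}, z) = \frac{1 + z^{\deg \eta}}{(1 - z)(1 - z^{2})(1 - z^{q + 1})(1 - z^{q^{2} - q})}
\end{equation*}
provided by \Cref{prop:hypersurface}, and compute its degree as a rational function. The numerator has degree $\deg \eta$, and the denominator has degree
\begin{equation*}
1 + 2 + (q+1) + (q^{2} - q) = q^{2} + 4.
\end{equation*}
Thus $\deg \Hilb(S^{G}) = \deg \eta - q^{2} - 4$. By \Cref{prop:a-invariant-invariant}, this degree equals $-4$, which forces $\deg \eta = q^{2}$. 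Substituting back into the formula from \Cref{prop:hypersurface} yields~\Cref{eq:hilbert-series-SG}.

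There is no significant obstacle here, as both ingredients have already been established; the statement is essentially a bookkeeping corollary that extracts the degree of the missing secondary invariant from the $a$-invariant computation. The only minor point worth mentioning is the appeal to the fact that for a graded Cohen--Macaulay $K$-algebra, the $a$-invariant coincides with $\deg \Hilb$, which was recorded in the paragraph preceding \Cref{prop:a-invariant-invariant}.
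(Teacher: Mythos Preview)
Your proposal is correct and follows essentially the same route as the paper's proof: both invoke \Cref{prop:SG-cohen-macaulay} to identify $a(S^{G})$ with $\deg\Hilb(S^{G})$, then use \Cref{prop:a-invariant-invariant} to solve for $\deg\eta$ in the formula from \Cref{prop:hypersurface}. You have simply written out the arithmetic that the paper leaves implicit.
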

	\begin{proof} 
		By \Cref{prop:SG-cohen-macaulay}, $S^{G}$ is Cohen--Macaulay, and thus the equality $a(S^{G}) = \deg \Hilb(S^{G})$ holds. 
		Using this, we may solve for $\deg \eta$ in~\Cref{eq:hilbert-series-hypersurface} and get the desired equality.
	\end{proof}

	\begin{cor} \label{cor:q2-degree-invariant-does-the-job}
		There exists an invariant $\eta \in S^{G} \mysetminus R$ of degree $q^{2}$. 
		For any such $\eta$, we have the $R$-module decomposition $S^{G} = R \oplus R \eta$. 
		In turn, we have $S^{G} = K[f_{1}, f_{2}, f_{3}, f_{4}, \eta]$.
	\end{cor}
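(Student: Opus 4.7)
The corollary is essentially a bookkeeping consequence of \Cref{prop:hypersurface} and \Cref{cor:hilbert-series-SG}. For the existence claim, I would combine these two results directly: the former supplies an $\eta_{0} \in S^{G}$ with $S^{G} = R \oplus R \eta_{0}$, and comparing the Hilbert series produced by that proposition with the explicit one of \Cref{cor:hilbert-series-SG} pins down $\deg \eta_{0} = q^{2}$. The element $\eta_{0}$ is nonzero and, by directness of the sum, does not lie in $R$, so it meets the existence requirement.

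The substance of the statement is that \emph{any} $\eta \in S^{G} \mysetminus R$ of degree $q^{2}$ plays the same role. Given such an $\eta$, the plan is to expand it in the already-known decomposition as $\eta = r + \lambda \eta_{0}$ with $r, \lambda \in R$; comparing degrees forces $r \in R_{q^{2}}$ and $\lambda \in R_{0} = K$. The hypothesis $\eta \notin R$ rules out $\lambda = 0$, hence $\lambda \in K^{\times}$, so $R \eta_{0} = R \eta + R r = R \eta + R$, and consequently $R + R \eta = R + R \eta_{0} = S^{G}$. The directness $R \cap R \eta = 0$ follows by a short argument: if $s \eta \in R$ for some $s \in R$, writing $s \eta = s r + s \lambda \eta_{0}$ and using $R \cap R \eta_{0} = 0$ yields $s \lambda = 0$, hence $s = 0$.

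The $K$-algebra equality $S^{G} = K[f_{1}, f_{2}, f_{3}, f_{4}, \eta]$ is then immediate from the $R$-module decomposition together with $R = K[f_{1}, f_{2}, f_{3}, f_{4}]$. There is no substantive obstacle here; the only mildly delicate point is confirming that directness of the sum is preserved after swapping $\eta_{0}$ for a general $\eta$ of the right degree, and the brief argument above handles it.
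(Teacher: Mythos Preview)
Your proposal is correct and follows essentially the same approach as the paper. The paper is terser: for existence it reads off the coefficient of $z^{q^{2}}$ in $\Hilb(S^{G}) - \Hilb(R)$ directly, and for the ``any such $\eta$'' part it simply notes that such an $\eta$ is of minimal degree in $S^{G} \mysetminus R$; your explicit change-of-basis computation $\eta = r + \lambda \eta_{0}$ with $\lambda \in K^{\times}$ is one way to unpack that minimal-degree remark.
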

	\begin{proof} 
		The existence of such an $\eta$ follows from the knowledge of the Hilbert series. Indeed, we have
		\begin{equation*} 
			\Hilb(S^{G}, z) - \Hilb(R, z) = \frac{z^{q^{2}}}{(1 - z)(1 - z^{2})(1 - z^{q + 1})(1 - z^{q^{2} - q})}.
		\end{equation*}
		The coefficient of $z^{q^{2}}$ in the above is $1$, proving the first statement. 
		For the second statement, we note that $\eta$ is then of minimal degree.
	\end{proof}

	\begin{rem} \label{rem:optimal}
		We remark that our choices of invariants have been optimal in the following ways:
		\begin{enumerate}[label=(\alph*)]
			\item Because the ring $S^{G}$ is a four-dimensional ring that is not a polynomial ring, we need at least~$5$ algebra generators for $S^{G}$, which is what we have obtained.
			\item The primary invariants are optimal if we are trying to minimise the product of their degrees. 
			Indeed, by~\Cite[Proposition 3.3.5, Theorem 3.7.1]{DerksenKemper}, 
			the product of degrees is a multiple of $\mdd{\widehat{G}}$,
			with the product being equal to $\mdd{\widehat{G}}$ only if $S^{G}$ is a polynomial ring. 
			Thus, in our situation, the smallest product that one can obtain is~$2\mdd{\widehat{G}}$, which is what we do.
			Similarly, we see that the number of secondary invariants (with respect to any set of primary invariants) must be at least~$2$, and we obtain this minimum.
		\end{enumerate}
	\end{rem}

\section{The missing invariant} \label{sec:missing-invariant}

	We now show that the secondary invariant $\eta$ may be chosen as the Jacobian of the $f_{i}$, when the characteristic is odd. 
	To this end, we define
	\begin{align*} 
		h &\coloneqq \Jac(f_{1}, f_{2}, f_{3}, f_{4}) \\
		&= 
		\det
		\begin{bmatrix}
			1 & 0 & 0 & 1 \\
			d & -c & -b & a \\
			d^{q} & -c^{q} & -b^{q} & a^{q} \\
			\frac{\partial f_{4}}{\partial a} & \frac{\partial f_{4}}{\partial b} & \frac{\partial f_{4}}{\partial c} & \frac{\partial f_{4}}{\partial d} \\
		\end{bmatrix}.
	\end{align*}
	The element $h$ is readily seen to be homogeneous of degree $q^{2}$ with the caveat that $h$ may be zero. 
	We first prove that this is not the case. 

	\begin{lem} 
	 	The element $h$ is nonzero.	
	\end{lem}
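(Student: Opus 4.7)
The plan is to invoke the Jacobian criterion for separability. Recall that for polynomials $y_{1}, \ldots, y_{n} \in K[x_{1}, \ldots, x_{n}]$ that are algebraically independent over $K$, the Jacobian determinant $\det(\partial y_{i}/\partial x_{j})$ is nonzero if and only if the induced finite field extension $K(x_{1}, \ldots, x_{n})/K(y_{1}, \ldots, y_{n})$ is separable (see, e.g., Matsumura's \emph{Commutative Ring Theory}, Theorem 26.5).

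Algebraic independence of $f_{1}, f_{2}, f_{3}, f_{4}$ over $K$ is immediate from \Cref{thm:primary-invariants}: a homogeneous system of parameters for the four-dimensional ring $S$ must be algebraically independent. For separability, the key input is \Cref{cor:R-is-S-Gamma}, which identifies $R = K[f_{1}, f_{2}, f_{3}, f_{4}] = S^{\Gamma}$. Since $\Gamma \le \GL(V)$ is a finite group acting faithfully on $V$, and hence on $S$, the fraction field extension $\mathrm{Frac}(S) \supseteq \mathrm{Frac}(S^{\Gamma})$ is Galois with group $\Gamma$, and in particular separable. The Jacobian criterion then yields $h \neq 0$.

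The main subtlety is the need for separability: in positive characteristic, algebraic independence alone does not force a Jacobian to be nonzero (as $y = x^{p}$ illustrates). The identification $R = S^{\Gamma}$ is precisely what overcomes this obstruction, by supplying the Galois (hence separable) structure of the associated fraction-field extension. If one prefers to avoid the Jacobian criterion, an equivalent geometric formulation is that the finite morphism $\mathrm{Spec}(S) \to \mathrm{Spec}(R)$ is generically étale (because it is a Galois cover), so its ramification divisor---cut out by $h$---cannot be all of $\mathrm{Spec}(S)$.
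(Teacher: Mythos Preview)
Your argument is correct and takes a genuinely different route from the paper. The paper proves $h \neq 0$ by a direct monomial computation: it fixes the lexicographic order with $a > b > c > d$, identifies $\LM(f_{4}) = a^{(q-1)^{2}} b^{q-1}$, and exhibits a single monomial in the $4 \times 4$ determinant expansion that cannot be cancelled by any other summand. Your proof instead invokes the Jacobian criterion for separability, feeding it the identification $R = S^{\Gamma}$ from \Cref{cor:R-is-S-Gamma}: since $\Gamma \le \GL(V) = \GL_{4}(\mathbb{F}_{q})$ is finite and acts faithfully, the extension $\mathrm{Frac}(S)/\mathrm{Frac}(R)$ is Galois and hence separable, forcing the Jacobian to be nonzero.

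The trade-offs are clear. The paper's computation is elementary and self-contained---it needs nothing from \S5, only the explicit description of the $f_{i}$---and would survive even if one later tinkered with the definition of $\Gamma$. Your argument is cleaner and more conceptual, avoids any calculation with $f_{4}$, and explains \emph{why} the Jacobian is nonzero rather than merely verifying it; however, it leans on the structural result \Cref{cor:R-is-S-Gamma}, which in turn rests on \Cref{prop:hypersurface} and the degree count for $\widehat{G}$. There is no circularity, since \Cref{cor:R-is-S-Gamma} is established before this lemma and makes no use of $h$, but your proof is logically further downstream. A pleasant by-product of your approach is that it makes transparent why the paper can later use $h$ to detect ramification of $S$ over $R$.
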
 
	\begin{proof} 
		By~\Cite[Proposition~5.4.2]{Benson:PolynomialInvariantsBook}, it suffices to show that the field extension $K(f_{1}, f_{2}, f_{3}, f_{4}) \subset K(a, b, c, d)$ is finite and separable. 
		In view of \Cref{cor:R-is-S-Gamma}, this field extension is precisely $\Frac(S)^{\Gamma} \subset \Frac(S)$, and thus is a finite Galois extension.
	\end{proof}

	\begin{prop}
		We have $h \in S^{G}$ and $h^{2} \in R$. 
		If the characteristic of $K$ is odd, then $h \notin R$. 
	\end{prop}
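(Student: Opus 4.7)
The plan is to use the transformation rule for the Jacobian under a linear change of variables, which converts each claim into a sign computation. For any $\sigma \in \GL(V)$ inducing a $K$-algebra automorphism of $S$, the chain rule gives
\begin{equation*}
    \Jac(\sigma f_{1}, \sigma f_{2}, \sigma f_{3}, \sigma f_{4})
    = \det(\sigma|_{S_{1}}) \cdot \sigma(h),
\end{equation*}
where $\sigma|_{S_{1}}$ denotes the restriction of $\sigma$ to the $K$-vector space $S_{1}$ of degree-one forms. Each of the three assertions reduces to evaluating $\det(\sigma|_{S_{1}})$ for a well-chosen $\sigma$ and using that the $f_{i}$ are already known to be $\Gamma$-invariant by \Cref{prop:primary-invariants-are-invariant}.

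First, for $\sigma \in G$, \Cref{lem:action-factors-through-SL} gives $\rho(\sigma) \in \SL(V)$, and hence $\sigma$ acts on $S_{1}$ with determinant one as well. Since $\sigma f_{i} = f_{i}$, the displayed identity collapses to $h = \sigma(h)$, proving $h \in S^{G}$. Next, apply the identity to $\tau_{ad}$: its restriction to $S_{1}$ swaps $a \leftrightarrow d$ and fixes $b, c$, so $\det(\tau_{ad}|_{S_{1}}) = -1$. As $\tau_{ad}$ fixes each $f_{i}$, we conclude $\tau_{ad}(h) = -h$; therefore $\tau_{ad}(h^{2}) = h^{2}$, and combined with $\widehat{G}$-invariance, $h^{2} \in S^{\Gamma} = R$ by \Cref{cor:R-is-S-Gamma}. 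Finally, when $\chr(K)$ is odd, the preceding lemma gives $h \neq 0$, so $\tau_{ad}(h) = -h \neq h$, and therefore $h \notin S^{\Gamma} = R$.

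There is no serious obstacle here; the only delicate point is the sign bookkeeping. It is the factor $\det(\tau_{ad}|_{S_{1}}) = -1$ that drives the dichotomy: in characteristic two it degenerates to $+1$, in which case $\tau_{ad}(h) = h$ and the argument places no restriction on whether $h$ lies in $R$, consistent with the characteristic-two case treated elsewhere in the theorem.
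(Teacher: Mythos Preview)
Your proof is correct and follows essentially the same approach as the paper: both arguments use the chain rule to track how the Jacobian transforms under a linear automorphism, invoke \Cref{lem:action-factors-through-SL} for the $G$-invariance, and then use \Cref{cor:R-is-S-Gamma} together with the computation $\tau_{ad}(h) = -h$ to settle the remaining claims. The paper phrases the last step more concretely---observing that $\tau_{ad}$ swaps the first and last columns of the Jacobian matrix $J$---but this is exactly the specialisation of your general transformation formula to $\sigma = \tau_{ad}$.
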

	\begin{proof} 
		Because $h$ is the Jacobian of $G$-invariant elements and the action of $G$ factors through $\SL(V)$ (by \Cref{lem:action-factors-through-SL}), the chain rule yields $h \in S^{G}$, see~\Cite[Proposition 1.5.6]{Smith:PolynomialInvariantsBook}. 

		For the remaining statements, note that by \Cref{cor:R-is-S-Gamma}, we have $R = S^{\Gamma} = S^{G} \cap S^{\tau_{ad}}$. 
		Thus, it suffices to show that $h^{2}$ is $\tau_{ad}$-invariant and that $h$ is not $\tau_{ad}$-invariant in odd characteristic. 
		The automorphism $\tau_{ad}$ switches the extreme columns of $J$, giving us $\tau_{ad} \cdot h = -h$ and $\tau_{ad} \cdot h^{2} = h^{2}$. 
		This finishes the proof.
	\end{proof}

	\begin{rem}
		The same proof above shows that if $\chr(K) = 2$, then $h$ is indeed an element of $R$. 
		Therefore, we need to pick a different element of degree $q^{2}$ to generate the invariant ring,
		the existence of such an element being ensured by \Cref{cor:q2-degree-invariant-does-the-job}.
	\end{rem}

	Thus, for odd characteristic, $h$ fulfils the hypothesis of $\eta$ as in \Cref{cor:q2-degree-invariant-does-the-job}, giving us the corresponding statements of \Cref{mainthm}.

\section{Additional properties of the invariant ring} \label{sec:additional}

	We now prove the additional ring-theoretic properties of the invariant ring mentioned in \Cref{mainthm}. 

	A natural question to ask is whether the inclusion $S^{G} \into S$ splits $S^{G}$-linearly. 
	Because this is a finite extension and $S$ is a polynomial ring, this question is equivalent to asking whether $S^{G}$ is \emph{$F$-regular}; 
	indeed, a direct summand of a polynomial ring is $F$-regular, and conversely, an $F$-regular ring splits off from any finite extension, see~\Cite[Theorem 5.25]{HochsterHuneke:JAG}.

	\begin{prop} \label{prop:SG-not-F-regular}
		The inclusion $S^{G} \into S$ does not split $S^{G}$-linearly. Equivalently, the ring $S^{G}$ is not $F$-regular.
	\end{prop}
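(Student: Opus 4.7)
The equivalence between non-splitting of $S^G \into S$ and the failure of $F$-regularity of $S^G$ uses the cited~\Cite[Theorem~5.25]{HochsterHuneke:JAG}: since $S$ is regular and the extension is finite, $S^G$ is a direct summand of $S$ if and only if it is $F$-regular. The plan is therefore to violate the weaker property of cyclic purity (i.e., the equality $IS \cap S^G = I$ for every ideal $I \subseteq S^G$), which would be a consequence of splitting. Concretely, I will take $I = (f_1, f_2, f_3, f_4)\, S^G$ and exhibit an element of $S^G$ that lies in $IS$ but not in $I$.

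In odd characteristic the test element will be the Jacobian $h = \det(\partial f_i/\partial x_j) \in S^G$ of degree $q^2$ from \Cref{sec:missing-invariant}. The key computation is the classical residue identity in $H^{4}_{\mathfrak{m}}(S)$,
\begin{equation*}
\bigl[h/(f_1 f_2 f_3 f_4)\bigr] \;=\; \dim_{K}\bigl(S/(f_1, f_2, f_3, f_4)\, S\bigr)\cdot \bigl[1/(abcd)\bigr] \;=\; 2q(q^2 - 1)\cdot \bigl[1/(abcd)\bigr],
\end{equation*}
valid because the $f_i$ form a regular sequence in $S$ with zero-dimensional quotient, so that the Jacobian generates the one-dimensional socle up to the trace factor $\dim_{K}(S/(f_1, \ldots, f_4)S) = \prod_i \deg(f_i)$. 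Since $p \mid q$, the scalar $2q(q^2 - 1)$ vanishes in $K$, forcing the class to be zero, which is equivalent to $h \in (f_1, f_2, f_3, f_4)\, S = IS$. On the other hand, $h \notin R$ in odd characteristic, so in the $R$-module decomposition $S^{G} = R \oplus R h$, every element of $I = (f_1, \ldots, f_4)R \,\oplus\, (f_1, \ldots, f_4)R\cdot h$ has its $Rh$-coefficient in the proper ideal $(f_1, \ldots, f_4)R$; since the coefficient of $h$ in $h = 0 + 1 \cdot h$ is the unit $1$, one concludes $h \notin I$, yielding the required violation.

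The main obstacle is the characteristic-two case, in which $h \in R \subset I$ and so $h$ is no longer a valid test element. The approach is to replace $h$ by any $\eta \in S^{G} \setminus R$ of degree $q^2$ guaranteed by \Cref{cor:q2-degree-invariant-does-the-job}; the same decomposition argument gives $\eta \notin I$, so it remains to show $\eta \in IS$. By Hilbert series comparison, both $[S^G/I]_{q^2}$ and $[S/IS]_{q^2}$ are one-dimensional, so the map $S^G/I \to S/IS$ is either an isomorphism or zero in degree $q^2$; we aim to show it is the latter. The vanishing of the Jacobian $h$ as the socle-normaliser of $S/(f_1, \ldots, f_4)S$ (again via $p \mid q$), combined with the fact that in characteristic two $h \in R$ forces the canonical socle generator to be $\Gamma$-antisymmetric through $\tau_{ad}$ while $\eta$ has, up to $R$-shift, the opposite $\tau_{ad}$-parity, makes the only consistent possibility the vanishing of $\bar{\eta}$ in $S/IS$, and gives the required violation of cyclic purity in this case as well.
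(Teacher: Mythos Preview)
Your odd-characteristic argument is correct and rather elegant: the Scheja--Storch/Tate residue identity $[\Jac(f)/(f_{1}\cdots f_{4})] = (\prod_{i}\deg f_{i})\,[1/(abcd)]$ does hold for a homogeneous regular sequence, and since $p \mid \prod_{i}\deg f_{i}$ you get $h \in (f_{1},\ldots,f_{4})S$, while $h \notin (f_{1},\ldots,f_{4})S^{G}$ follows immediately from the Hironaka decomposition.  This is a genuinely different route from the paper, which simply invokes the general results of Goel--Jeffries--Singh and Broer (modular action with $a(S^{G}) = a(S)$, or modular action with no pseudoreflections, forces non-splitting).  Your approach has the virtue of producing an explicit ideal-theoretic witness to the failure of cyclic purity.

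The characteristic-two case, however, has a real gap.  Your argument there rests on a ``$\tau_{ad}$-parity'' dichotomy, but in characteristic two there is no nontrivial sign: $-1 = 1$, so $\tau_{ad}$ cannot act by a nontrivial scalar on any one-dimensional space, and the sentence ``the canonical socle generator is $\Gamma$-antisymmetric through $\tau_{ad}$ while $\eta$ has the opposite $\tau_{ad}$-parity'' has no content.  What you actually need is $\eta \in (f_{1},\ldots,f_{4})S$, i.e.\ that the image of $\eta$ in the one-dimensional socle of $S/(f_{1},\ldots,f_{4})S$ vanishes.  You have correctly observed that $\Gamma$ acts trivially on that socle (since $\widehat{G}$ is perfect in even characteristic and $\tau_{ad}^{2}=1$ forces the eigenvalue to be $1$), but this only tells you that \emph{if} $\bar\eta \neq 0$ then $\bar\eta$ is $\Gamma$-fixed---it does not force $\bar\eta = 0$.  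Indeed, the residue identity gives you information only about $\Jac(f)$, which in characteristic two already lies in $R_{+} \subset (f)S$ for trivial reasons, so no new constraint on $\eta$ is obtained.  Without an independent argument that $\eta \in (f_{1},\ldots,f_{4})S$, the cyclic-purity violation is not established, and the proof is incomplete in this case.  The paper's cited results, by contrast, are insensitive to the parity of $q$.
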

	\begin{proof} 
		By \Cref{prop:a-invariant-invariant}, the $a$-invariants of $S$ and $S^{G}$ are the same. 
		Because the action of $G$ is modular,	we get the result by~\Cite[Theorem 2.18]{Jeffries:Thesis} or~\Cite[Corollary 4.2]{GoelJeffriesSingh}.

		Alternatively, by \Cref{lem:G-is-small}, the action of $G$ is modular and contains no pseudoreflections. 
		The result then follows from~\Cite[\S2.2 Corollary 2]{Broer:DirectSummandProperty}.
	\end{proof}

	A second natural question to ask is whether the normal domain $S^{G}$ is a UFD. 

	\begin{prop} \label{prop:UFD}
		The ring $S^{G}$ is a unique factorisation domain if and only if $\chr(K) = 2$. 
		If $\chr(K)$ is odd, then the class group of $S^{G}$ is isomorphic to $\mathbb{Z}/2$.
	\end{prop}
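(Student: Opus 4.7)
The plan is to compute $\Cl(S^{G})$ directly; the UFD claim then follows from the standard fact that a Krull (hence normal) domain is a UFD precisely when its divisor class group vanishes. The key tool I would invoke is a classical theorem (attributable to Nakajima, with roots in Samuel and Fossum--Griffith) stating that for a finite subgroup $H \le \GL(V)$ acting on $\Sym(V^{\ast})$ without pseudoreflections, there is a natural isomorphism
\begin{equation*}
\Cl(\Sym(V^{\ast})^{H}) \cong \Hom(H, K^{\times}).
\end{equation*}
Applied to $\widehat{G}$, which acts faithfully and contains no pseudoreflections by \Cref{lem:G-is-small}, this would give $\Cl(S^{G}) \cong \Hom(\widehat{G}, K^{\times})$.

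Since $K^{\times}$ is abelian, $\Hom(\widehat{G}, K^{\times}) = \Hom(\widehat{G}^{\mathrm{ab}}, K^{\times})$, so I would next compute the abelianisation. For $K \neq \mathbb{F}_{2}$, \Cref{lem:commutator} already supplies $\widehat{G}^{\mathrm{ab}} \cong K^{\times}/(K^{\times})^{2}$. The remaining case $K = \mathbb{F}_{2}$ is handled at once: $K^{\times}$ is trivial there, so $\Hom(\widehat{G}, K^{\times}) = 0$ and $S^{G}$ is a UFD, consistent with $\chr(K) = 2$.

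To conclude, I would analyse $\Hom(K^{\times}/(K^{\times})^{2}, K^{\times})$ in each characteristic. If $\chr(K) = 2$, then $\md{K^{\times}} = q - 1$ is odd, squaring is a bijection on $K^{\times}$, the quotient is trivial, and the $\Hom$-group vanishes; hence $\Cl(S^{G}) = 0$ and $S^{G}$ is a UFD. If $\chr(K)$ is odd, then $K^{\times}/(K^{\times})^{2} \cong \mathbb{Z}/2$, and a nontrivial homomorphism into $K^{\times}$ must send the generator to an element of order two, which must be $-1$; thus $\Hom \cong \mathbb{Z}/2$. Both outcomes match the claim.

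The principal obstacle is ensuring that the class-group isomorphism applies in our \emph{modular} setting, where $\chr(K)$ divides $\md{\widehat{G}} = q(q^{2} - 1)$. In the nonmodular regime the formula is entirely standard; the version I need does not require $\md{\widehat{G}}$ to be invertible in $K$, only the absence of pseudoreflections and the linearity of the action, both of which are in hand. Everything else reduces to elementary group-theoretic bookkeeping already developed in \Cref{sec:facts-conjugation}.
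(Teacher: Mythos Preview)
Your proposal is correct and follows essentially the same route as the paper: invoke the class-group formula $\Cl(S^{G}) \cong \Hom(\widehat{G}, K^{\times})$ valid in the absence of pseudoreflections (the paper cites Benson's book for this, covering the modular case you were worried about), pass to the abelianisation via \Cref{lem:commutator}, handle $K = \mathbb{F}_{2}$ separately, and finish with the elementary computation of $\Hom(K^{\times}/(K^{\times})^{2}, K^{\times})$.
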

	\begin{proof} 
		As noted in \Cref{lem:G-is-small}, the action of $G$ contains no pseudoreflections. 
		Thus, by~\Cite[Theorem 3.9.2]{Benson:PolynomialInvariantsBook}, the class group $\Cl(S^{G})$ is isomorphic to $\Hom(\widehat{G}, K^{\times})$. 
		Note that $S^{G}$ is a UFD precisely when $\Cl(S^{G})$ is trivial. 
		Because $K^{\times}$ is abelian, we obtain
		\begin{equation*} 
			\Cl(S^{G}) \cong \Hom(\widehat{G}, K^{\times}) \cong \Hom(\widehat{G}/[\widehat{G}, \widehat{G}], K^{\times}).
		\end{equation*}

		Thus, the result is clear when $K = \mathbb{F}_{2}$ for then $K^{\times}$ is trivial. 
		We now assume that $K$ has at least three elements. 
		Then, by \Cref{lem:commutator}, we have
		\begin{equation*} 
			\frac{\widehat{G}}{[\widehat{G}, \widehat{G}]} 
			\cong \frac{K^{\times}}{\left(K^{\times}\right)^{2}} 
			\cong \frac{\mathbb{Z}/(q-1)}{2\mathbb{Z}/(q - 1)} 
			\cong \mathbb{Z}/(2, q - 1).
		\end{equation*}

		Observing that 
		$\displaystyle\Hom
		\left(
		\frac{\mathbb{Z}}{(2, q - 1)}, 
		\frac{\mathbb{Z}}{(q - 1)}
		\right) 
		\cong \frac{\mathbb{Z}}{\gcd(2, q - 1)}$
		gives us the desired result.
	\end{proof}

	\begin{rem}
		We note that the results on the $a$-invariant and the class group readily generalise to the conjugation action of $\GL_{n}(K)$ on $K[X_{n \times n}]$ for any $n > 2$ and finite field $K$. 
		If we continue to use $G$ and $S$ respectively to denote the group and the polynomial ring, we then obtain:
		\begin{enumerate}[label=(\alph*)]
			\item $a(S^{G}) = a(S) = -n^{2}$ and the inclusion $S^{G} \into S$ does not split, and
			\item $S^{G}$ is a unique factorisation domain precisely when $n$ and $q - 1$ are coprime; more generally, the class group of $S^{G}$ is $\mathbb{Z}/\gcd(n, q - 1)$.
		\end{enumerate}

		The same proofs as in \Cref{prop:a-invariant-invariant,prop:SG-not-F-regular} are easily adapted for (a); 
		the proofs of \Cref{lem:G-is-small,lem:action-factors-through-SL} work with slight modifications to show that the action of $G$ contains no pseudoreflections and factors through $\SL$. 
		For (b), the same proof as \Cref{prop:UFD} works mutatis mutandis noting that we have $\widehat{G} \cong \PGL_{n}(K)$.
	\end{rem}

	\begin{rem}
		Continuing our notation from the previous remark, 
		we now observe that the homological properties of $S^{G}$ do not generalise. 
		If $n > 2$ and $P \le G$ is the unipotent subgroup, 
		then $S^{G}$ is not a complete intersection and 
		$S^{P}$ is not Cohen--Macaulay. 
		These facts follow from~\Cite[Theorem A]{KacWatanabe} and~\Cite[Corollary 3.7]{Kemper:CM}, respectively, after noting that the 
		action of $G$ contains no \emph{bireflections} for $n > 2$; 
		a non-identity element $\sigma \in \GL(V)$ is said to be a bireflection if $\rank(\sigma - \id_{V}) \le 2$.
	\end{rem}

\part{The special linear group and traceless matrices} \label{part:two}

	We now describe how we can obtain other invariant rings from the above. 
	Recall that the Lie algebra of $\SL_{2}(K)$ is the space of \emph{traceless} matrices
	\begin{equation*} 
		\sl \coloneqq \{M \in \MM_{2}(K) : \trace(M) = 0\},
	\end{equation*}
	and is stable under the conjugation action. 
	Thus, we may ask for the polynomial invariants of this conjugation action.
	We set $T \coloneqq \Sym(\sl^{\ast})$,
	and identify $T$ with a polynomial algebra 
	after choosing the basis
	$\smatrix{1 & 0 \\ 0 & -1}$, $\smatrix{0 & 1 \\ 0 & 0}$, $\smatrix{0 & 0 \\ 1 & 0}$ for $\sl$ 
	and letting $\{a, b, c\}$ be the dual basis. 
	We then have $T = K[a, b, c]$, and
	suggestively arranging these variables in a $2 \times 2$ matrix as
	$Y \coloneqq \smatrix{a & b \\ c & -a}$, 
	we get that
	the element $\sigma \in G$ then acts on $T$ by
	\begin{equation*} 
		\sigma \colon
		Y	\mapsto	\sigma^{-1}	Y \sigma.
	\end{equation*}
	Similarly, we may also ask for the polynomial invariants for the conjugation actions of $\SL_{2}(K)$ on $\gl$ and $\sl$. 
	We answer all three questions below. As it turns out, the structures of the invariant rings are very similar, and they all essentially follow from the description and techniques of \Cref{part:one}.

\section{The action of \texorpdfstring{$\GL_{2}$}{GL2} on \texorpdfstring{$\sl$}{sl2}} \label{sec:GL-traceless}
	
	We continue the use of the notation $K$, $G$, $V$, $S$ as in \Cref{part:one}. 
	Because the inclusion $\sl \into V$ is $G$-equivariant, so is the corresponding $K$-algebra homomorphism
	\begin{align*} 
		\pi \colon S & \onto T \\
		\bgeneric	& \mapsto \btraceless,
	\end{align*}
	that is, $\pi(\sigma \cdot f) = \sigma \cdot \pi(f)$ for all $f \in S$ and $\sigma \in G$. 
	In particular, $\pi(S^{G}) \subset T^{G}$. 
	We will show that all invariants arise this way if the characteristic is odd; 
	this will be an easy consequence of the inclusion $\sl \into V$ splitting $G$-equivariantly. 
	In particular, the invariant ring is then a hypersurface.
	If the characteristic is two, then there are more invariants and the invariant ring is even a polynomial ring. 
	The reason for the homological change in characteristic two can be attributed to 
	the action of $\GL_{2}$ on $\sl$ contains pseudoreflections precisely in characteristic two. 
	We continue to denote $\pi(\det)$ by $\det$, i.e., $\det = -a^{2} - b c \in T$.

	\begin{thm} \label{thm:traceless-GL}
		Let $K \coloneqq \mathbb{F}_{q}$ be a finite field with $q$ elements. 
		Consider the conjugation action of $G \coloneqq \GL_{2}(K)$ on
		$T \coloneqq \Sym(\sl^{\ast})$.
		\begin{enumerate}[label=(\roman*)]
			\item If $q$ is even, then $T^{G}$ is a polynomial ring given as
			\begin{equation*} 
				T^{G} = K[
				\det,\;
				\mathcal{P}^{1}(\det),\;
				\sqrt{\pi(f_{4})}].
			\end{equation*}
			The $a$-invariant of $T^{G}$ is $-\left(3 + q + \binom{q}{2}\right)$.
			\item If $q$ is odd, then $\pi(S^{G}) = T^{G}$.
			Thus, $T^{G}$ is a hypersurface given as
			\begin{equation*} 
				T^{G} = K[
				\det,\;
				\mathcal{P}^{1}(\det),\;
				\pi(f_{4}), \;
				\pi(h)],
			\end{equation*}
			where $h$ is any secondary invariant as in \Cref{mainthm}. 
			We may also replace $\pi(h)$ with the Jacobian of the other three invariants above. 
			The Hilbert series of $T^{G}$ is given as
			\begin{equation} \label{eq:hilb-TG}
				\Hilb(T^{G}, z) = 
				\frac{1 + z^{q^{2}}}
				{(1 - z^{2})(1 - z^{q + 1})(1 - z^{q^{2} - q})}.
			\end{equation}
			Additionally, 
			$T^{G}$ is not $F$-regular, 
			has class group $\mathbb{Z}/2$ 
			and $a$-invariant $-3$.
		\end{enumerate}
	\end{thm}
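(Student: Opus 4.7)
The plan is to leverage the $G$-equivariant surjection $\pi \colon S \onto T$ (with kernel $(f_{1})$) to transfer the description from \Cref{mainthm}. The two cases of the theorem differ because $\sl$ admits a $G$-equivariant complement in $V$ precisely when $\chr K$ is odd: in odd characteristic one has the splitting $V = \sl \oplus K \cdot \id$ of $G$-modules (scalars forming a trivial subrepresentation), while in characteristic~$2$ the identity matrix itself lies in $\sl$. This dichotomy is mirrored by the fact that the $G$-action on $\sl$ contains pseudoreflections only in characteristic~$2$---the unipotent $\smatrix{1 & \alpha \\ 0 & 1}$ acts on the $T$-variables $a, b, c$ via a rank-one perturbation of the identity in characteristic~$2$, but acquires an additional $a$-coefficient in odd characteristic.

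For part (ii), the splitting $V = \sl \oplus K \cdot \id$ yields a $G$-equivariant isomorphism $S \cong T \otimes_{K} K[f_{1}]$ of graded $K$-algebras with $K[f_{1}]$ carrying the trivial action. Taking invariants then gives $S^{G} = T^{G}[f_{1}]$, hence $T^{G} \cong S^{G}/(f_{1})$, and combined with \Cref{mainthm} this immediately yields $T^{G} = K[\det, \mathcal{P}^{1}(\det), \pi(f_{4}), \pi(h)]$ as a hypersurface (with relation inherited from $h^{2} \in R$); multiplying $\Hilb(S^{G})$ by $(1-z)$ produces \Cref{eq:hilb-TG}, whose degree is $-3$. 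For the non-$F$-regularity and class group claims, the arguments of \Cref{sec:additional} adapt with only cosmetic changes: the $G$-action on $\sl$ is still modular, factors through $\SL(\sl)$ (by restricting to the direct summand), has image $\PGL_{2}(K)$, and contains no pseudoreflections in odd characteristic---by a Jordan-form analysis as in \Cref{lem:G-is-small} using $V^{\sigma} = \sl^{\sigma} \oplus K \cdot \id$. Finally, the Jacobian of $\det, \mathcal{P}^{1}(\det), \pi(f_{4})$ with respect to $a, b, c$ is a $G$-invariant of degree $q^{2}$ and is $\tau_{ad}$-antiinvariant (the induced action of $\tau_{ad}$ on $T$ sends $a \mapsto -a$ and fixes $b, c$), hence does not lie in $K[\det, \mathcal{P}^{1}(\det), \pi(f_{4})]$ in odd characteristic and can serve as $\pi(h)$.

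For part (i), the splitting fails, so we construct $\sqrt{\pi(f_{4})}$ directly. Using $-1 = 1$ in characteristic~$2$, each factor of $\pi(f_{4})$ takes the form $\tau a + Bb + (g(A)/B)\,c$ and depends on $A$ only through $g(A)$. The identity $g(A) = g(A + \tau)$ holds in characteristic~$2$---where necessarily $\tau \neq 0$, lest $g(x) = x^{2} + \delta = (x + \sqrt{\delta})^{2}$ be reducible---and causes the $q$ values of $A$ to pair up for each fixed $B$. Hence $\pi(f_{4}) = P^{2}$ for some homogeneous $P \in T$ of degree $(q^{2} - q)/2$, and $P$ is $G$-invariant because $(\sigma \cdot P)^{2} = P^{2}$ forces $\sigma \cdot P = P$ in the characteristic-$2$ domain $T$. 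The proof of \Cref{thm:primary-invariants} restricts (setting $d = -a$) to show that $\det, \mathcal{P}^{1}(\det), \pi(f_{4})$ cut out the origin in $\overline{K}^{3}$, and the same conclusion therefore holds for $\det, \mathcal{P}^{1}(\det), P$. Invoking \Cref{por:conjugation-invariants-CM} and the degree-product criterion recalled in \Cref{rem:optimal}, the equality $2 \cdot (q+1) \cdot (q^{2}-q)/2 = q(q^{2}-1) = \mdd{\widehat{G}}$ forces the number of secondary invariants to be one, so $T^{G}$ is the polynomial ring on these three primary invariants, with $a$-invariant $-(3 + q + \binom{q}{2})$. The main obstacle is the perfect-square factorization $\pi(f_{4}) = P^{2}$; once it is in hand, the rest reduces to the tools and results of \Cref{part:one}.
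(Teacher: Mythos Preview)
Your proof follows the paper's essentially verbatim: the $G$-equivariant splitting $V = \sl \oplus K \cdot I_{2}$ for odd $q$, the pairing $A \leftrightarrow \tau - A$ (with $\tau \neq 0$) to exhibit $\sqrt{\pi(f_{4})}$ for even $q$, and the degree-product count are all as in the paper. Two minor points need tightening.

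First, in part~(i) you invoke \Cref{por:conjugation-invariants-CM} for the Cohen--Macaulayness of $T^{G}$, but that porism is stated only for subgroups acting on the four-variable ring $S$; it does not literally apply to $T$. The paper records the needed $T$-analogue separately as \Cref{lem:conjugation-on-traceless-properties}, computing $\codim(\sl^{P}) = 1$ and then running the same relative-transfer argument. The fix is routine, but the citation as written is to the wrong statement.

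Second, in part~(ii) your $\tau_{ad}$-antiinvariance argument for the Jacobian is slicker than the paper's explicit row-reduction, but antiinvariance alone does not place the Jacobian outside $K[\det, \mathcal{P}^{1}(\det), \pi(f_{4})]$: zero is both invariant and antiinvariant, so you still owe a verification that the Jacobian is nonzero. The paper sidesteps this by proving the identity $\pi(h) = \Jac(\det, \mathcal{P}^{1}(\det), \pi(f_{4}))$ via a direct determinant computation, after which nonvanishing and the secondary-invariant property are inherited from $h$; a leading-monomial check in $T$ as in \Cref{sec:missing-invariant} would close the gap in your version just as well.
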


	\begin{proof}[Proof of (ii)] 
		Assume that $q$ is odd. 
		Then, the $2 \times 2$ identity matrix $I_{2}$ is not in $\sl$ and we thus have the decomposition of $K$-vector spaces $V = \sl \oplus K \cdot I_{2}$. 
		Because $K \cdot I_{2}$ is $G$-stable, the injection $\sl \into V$ is split, and thus
		applying the contravariant functor $\left(\Sym\left((-)^{\ast}\right)\right)^{G}$ gives us the claimed surjection $S^{G} \onto T^{G}$. 
		The kernel is seen to be $(a + d) S^{G}$, giving us the Hilbert series of $T^{G}$ from our knowledge of $\Hilb(S^{G})$ by \Cref{mainthm}. 
		This also gives us the $a$-invariant as $T^{G}$ is then a hypersurface, and hence $a(T^{G}) = \deg \Hilb(T^{G})$. 
		In turn, we get the failure of $F$-regularity as in \Cref{prop:SG-not-F-regular} after noting that the action of $G$ continues to be modular. 
		The class group follows as in \Cref{prop:UFD}. 
		Applying these arguments to the induced action of $\Gamma$ on $T$ shows that $T^{\Gamma} = K[\det,	\mathcal{P}^{1}(\det), \pi(f_{4})]$, 
		and thus the Jacobian of these three invariants is nonzero and may be used as the secondary invariant. 
	\end{proof}

	To finish the proof for characteristic two, we now prove the analogous results about the action of $G$ on $\sl$ as we had for $V$. 
	We let $P \le G$ denote the unipotent subgroup as in~\Cref{eq:unipotent-defn}.

	\begin{lem} \label{lem:conjugation-on-traceless-properties}
		Let $q$ be even, and $\varphi \colon G \to \GL(\sl)$ be the conjugation representation of $G$. 
		Then, 
		\begin{enumerate}[label=(\roman*)]
			\item The kernel of $\varphi$ consists precisely of the scalar matrices. 
			In particular, $\md{\varphi(G)} = q(q - 1)(q + 1)$.
			\item The codimension $\dim(\sl) - \dim(\sl^{P})$ is $1$. 
			Consequently, $T^{G}$ is Cohen--Macaulay.
		\end{enumerate}
	\end{lem}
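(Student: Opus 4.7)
The plan is to adapt the computations already developed for the action on $V$ to the restriction to $\sl$, exploiting that in characteristic two the identity matrix $I_{2}$ lies in $\sl$ (so the trace constraint is automatic) while the kernel of the conjugation action still only sees scalars.

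For (i), I would observe that $\ker(\varphi)$ consists of $\sigma \in G$ commuting with every element of $\sl$. Since $\sl$ contains $E_{12}$ and $E_{21}$, it suffices to compute the simultaneous centraliser of these two matrices in $G$, which is a direct $2\times 2$ calculation showing only scalar matrices survive. Thus $\ker(\varphi) = K^{\times} \cdot I_{2}$, and
\begin{equation*}
	\mdd{\varphi(G)} \;=\; \frac{\md{G}}{q - 1} \;=\; \frac{(q^{2} - 1)(q^{2} - q)}{q - 1} \;=\; q(q - 1)(q + 1).
\end{equation*}

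For (ii), I would reuse the calculation from \Cref{lem:P-fixed-codim-2}: the matrices in $V$ commuting with all of $P$ are of the form $\smatrix{A & B \\ 0 & A}$. To land in $\sl$ we additionally require $2A = 0$, which is automatic in characteristic two. Hence $\sl^{P}$ is the two-dimensional subspace $\{\smatrix{A & B \\ 0 & A} : A, B \in K\}$, giving codimension $3 - 2 = 1$ in $\sl$.

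The Cohen--Macaulay conclusion then follows from the template of \Cref{prop:SG-cohen-macaulay}: codimension $\le 2$ forces $T^{P}$ to be Cohen--Macaulay by~\Cite[Theorem 3.9.2]{CampbellWehlau:ModularInvariantTheory}, $P$ is a Sylow-$p$ subgroup of $G$ (since $\md{G} = q(q-1)^{2}(q+1)^{2} \cdot \tfrac{1}{(q-1)(q+1)}$ has $p$-part exactly $q = \md{P}$), and the relative transfer gives a $T^{G}$-linear splitting of $T^{G} \into T^{P}$ because $[G : P] = (q^{2} - 1)(q - 1)$ is coprime to $p$. The only mildly subtle point to check cleanly is that this splitting still goes through in the characteristic two setting; but since $p = 2$ divides neither $q^{2} - 1$ nor $q - 1$, this is routine. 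No step strikes me as a real obstacle: the whole lemma is a careful transcription of the arguments from \Cref{sec:facts-conjugation} with the traceless constraint absorbed harmlessly by $\chr(K) = 2$.
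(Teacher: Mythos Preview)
Your proposal is correct and follows essentially the same approach as the paper: compute the kernel directly (the paper just calls it ``a straightforward computation''), identify $\sl^{P}$ as the span of $I_{2}$ and $E_{12}$, and then invoke the Cohen--Macaulay template from \Cref{prop:SG-cohen-macaulay}. One cosmetic slip: your parenthetical formula for $\md{G}$ simplifies to $q(q-1)(q+1)$, which is $\md{\varphi(G)}$, not $\md{G} = q(q-1)^{2}(q+1)$; fortunately your stated index $[G:P] = (q^{2}-1)(q-1)$ is correct, so the argument goes through unchanged.
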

	\begin{proof} 
		The first statement is a straightforward computation. 
		For the second, 
		note that $\sl^{P}$ is the $K$-span of $I_{2}$ and $\smatrix{0 & 1 \\ 0 & 0}$.
		As before, $\codim(\sl^{P}) \le 2$ tells us that $T^{P}$ is Cohen--Macaulay by~\Cite[Theorem 3.9.2]{CampbellWehlau:ModularInvariantTheory}. 
		As in the proof of \Cref{prop:SG-cohen-macaulay}, we are now done.
	\end{proof}

	\begin{lem} 
		If $q$ is even, then $\pi(f_{4})$ is a square, i.e., $\pi(f_{4}) = f^{2}$ for some $f \in T$. 
		Necessarily, $f \in T^{G}$.
	\end{lem}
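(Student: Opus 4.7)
The plan is to compute $\pi(f_{4})$ factor-by-factor and observe that in characteristic two the defining linear factors pair up perfectly. Applying $\pi$ (i.e., substituting $d = -a$) to the factor of $f_{4}$ indexed by $(A, B) \in K \times K^{\times}$ yields $(2A - \tau) a + B b - \frac{g(A)}{B} c$, which in characteristic two simplifies to
\begin{equation*}
	\ell_{A, B} \coloneqq \tau a + B b + \frac{g(A)}{B} c,
\end{equation*}
using $2A = 0$ and $-1 = 1$.

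Next I would show that the involution $A \mapsto \tau - A$ on $K$ is fixed-point free. A fixed point would force $\tau = 0$, whence $g(x) = x^{2} + \delta$; but the Frobenius is surjective on $\mathbb{F}_{q}$, so $\delta = \gamma^{2}$ for some $\gamma \in K$, making $g(x) = (x + \gamma)^{2}$ reducible and contradicting the choice of $g$. Combined with the identity $g(A) = g(\tau - A)$, this gives $\ell_{A, B} = \ell_{\tau - A, B}$ for every $(A, B)$, so choosing a set $\Sigma \subset K$ of representatives for the orbits (necessarily of size $q/2$) yields
\begin{equation*}
	\pi(f_{4}) = \left( \prod_{\substack{A \in \Sigma \\ B \in K^{\times}}} \ell_{A, B} \right)^{2};
\end{equation*}
the square root of the right-hand side is the desired $f \in T$, homogeneous of degree $q(q-1)/2$.

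Finally, the invariance of $f$ follows from uniqueness of square roots: the Frobenius is injective on the integral domain $T$, so for any $\sigma \in G$ the equality $(\sigma \cdot f)^{2} = \sigma \cdot \pi(f_{4}) = \pi(f_{4}) = f^{2}$ forces $\sigma \cdot f = f$; here I use that $\pi(f_{4}) \in T^{G}$, which holds because $f_{4} \in S^{G}$ and $\pi$ is $G$-equivariant. The only delicate point is the verification that $\tau \neq 0$, which is immediate once one invokes Frobenius surjectivity on $\mathbb{F}_{q}$; everything else is a direct calculation.
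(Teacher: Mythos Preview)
Your proof is correct and follows essentially the same approach as the paper: you compute $\pi(f_{4})$ factor-by-factor, use the fixed-point-free involution $A \mapsto \tau - A$ together with $g(A) = g(\tau - A)$ to pair the linear factors, and then invoke uniqueness of square roots in characteristic two for the invariance of $f$. The paper's argument is identical in structure, with the same justification that $\tau \neq 0$ via surjectivity of Frobenius on $\mathbb{F}_{q}$.
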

	\begin{proof} 
		Assume that $\chr(K) = 2$. 
		Then, $\pi(f_{4})$ takes the form
		\begin{equation*} 
			\pi(f_{4}) =
			\prod_{\substack{A \in K \\ B \in K^{\times}}} 
			\left(\tau a + B b + \frac{g(A)}{B} c \right).
		\end{equation*}
		Note that because $\chr(K) = 2$, we must necessarily have $\tau \neq 0$ because every element of $K$ is a perfect square. 
		Thus, the map $A \mapsto \tau - A$ is an involution of $K$ with no fixed points. 
		Because $g(A) = g(\tau - A)$, we see that in the product above, each linear factor has multiplicity exactly two, giving us that $\pi(f_{4})$ is a perfect square. 

		For the last statement, note that $\pi(f_{4})$ is an invariant and square roots are unique in characteristic two.
	\end{proof}

	\begin{lem} 
		When $q$ is even, the invariants $\det$, $\mathcal{P}^{1}(\det)$, $\sqrt{\pi(f_{4})}$ 
		form a system of parameters for $T$.
	\end{lem}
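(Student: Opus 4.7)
The plan is to show that the three homogeneous invariants have only the origin as a common zero in $\overline{K}^{3}$; since $T = K[a, b, c]$ has Krull dimension three, this is equivalent to their forming a homogeneous system of parameters.

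As a first reduction, I would replace $\sqrt{\pi(f_{4})}$ by $\pi(f_{4})$. The two generate ideals with the same radical, so their vanishing loci coincide, and it suffices to verify the claim for the triple $\det,\; \mathcal{P}^{1}(\det),\; \pi(f_{4})$.

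The key observation is that the proof of \Cref{thm:primary-invariants} already contains exactly what is needed. That proof begins by using $f_{1} = 0$ to substitute $d = -a$, after which the entire argument is carried out in the three variables $a, b, c$. Thus it actually establishes that the restrictions $f_{2}|_{d = -a}$, $f_{3}|_{d = -a}$, $f_{4}|_{d = -a}$ cut out only the origin in $\overline{K}^{3}$. Since $\pi \colon S \onto T$ is precisely the substitution $d \mapsto -a$, these three restrictions equal $\pi(f_{2}) = \det$, $\pi(f_{3}) = \mathcal{P}^{1}(\det)$, and $\pi(f_{4})$, respectively; the middle equality comes from $\pi$ being a $K$-algebra surjection that commutes with the Steenrod operation $\mathcal{P}^{1}$ (see \Cref{sec:steenrod}), and can also be verified directly by setting $d = -a$ in the explicit formula for $f_{3}$ and noting that in characteristic two the terms $a^{q}d + a d^{q}$ cancel.

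I anticipate no genuine obstacle. The proof of \Cref{thm:primary-invariants} was arranged so that the elimination of $d$ happens in the very first step, and the residual calculation transfers verbatim to the traceless setting. The only minor points requiring care are the compatibility of $\pi$ with $\mathcal{P}^{1}$ and the passage to the square root, both of which are routine.
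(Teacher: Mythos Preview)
Your proposal is correct and matches the paper's approach. The paper's one-line proof simply invokes \Cref{thm:primary-invariants} together with the observation that $T \cong S/(f_{1})$, so killing the first element of a homogeneous system of parameters leaves a system of parameters in the quotient; your argument unpacks this by pointing to the structure of that proof (eliminate $d$ first, then work in $\overline{K}^{3}$), which amounts to the same reduction.
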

	\begin{proof} 
		This follows from \Cref{thm:primary-invariants} after noting that $T$ is obtained from $S$ by killing the first term of a system of parameters.
	\end{proof}

	The lemmas above assemble as before to finish the proof of \Cref{thm:traceless-GL}.
	
	\begin{proof}[Proof \Cref{thm:traceless-GL} (i)]
		Assume that $q$ is even. 
		In view of \Cref{lem:conjugation-on-traceless-properties}, the product of degrees of our choice of primary invariants is given by
		\begin{equation*} 
			2 \cdot (q + 1) \cdot \frac{(q^{2} - q)}{2} 
			= q(q - 1)(q + 1) = \md{\varphi(G)}.
		\end{equation*}
		Because $T^{G}$ is Cohen--Macaulay, we get that 
		$T^{G} = K[\det,\; \mathcal{P}^{1}(\det),\; \sqrt{\pi(f_{4})}]$, 
		by~\Cite[Theorem 3.7.1]{DerksenKemper}. 
		The $a$-invariant of a polynomial ring is then the negative of the sum of the degrees of the generators.
	\end{proof}

\section{The action of \texorpdfstring{$\SL_{2}$}{SL2} on \texorpdfstring{$\gl$}{gl2} and \texorpdfstring{$\sl$}{sl2}} \label{sec:SL-generic}
	
	We now consider the conjugation action of $\SL_{2}(K)$ on the space $V = \MM_{2}(K)$ of all $2 \times 2$ matrices. 
	As before, we see that the kernel of this action is precisely the scalar matrices, i.e., we obtain a faithful action of $\PSL_{2}(K)$. 
	Because $\PSL_{2}(\mathbb{F}_{q}) = \PGL_{2}(\mathbb{F}_{q})$ when $q$ is even, we get the same invariant ring as in \Cref{mainthm}. 
	For this reason, we now consider $K$ to be a finite field of odd characteristic. 
	In this case, we may fix an irreducible quadratic of the form $g(x) = x^{2} + \delta \in K[x]$. 
	We then define
	\begin{equation} \label{eq:f4-SL}
		f_{4} \coloneqq 
		-\prod_{\substack{A \in K \\ B \in K^{\times}}} 
		\left(A a + B b - \frac{A^{2} + \delta}{B} c - A d\right). \\
	\end{equation}
	Up to a sign, $f_{4}$ above coincides with our choice as in~\Cref{defn:primary-invariants-description}. 
	We obtain the following result.

	\begin{thm} \label{thm:SL-generic-invariants}
		Let $q$ be an odd prime power, 
		$K = \mathbb{F}_{q}$ the finite field with $q$ elements, 
		$G \coloneqq \SL_{2}(K)$ the special linear group, 
		and $S \coloneqq K[X_{2 \times 2}]$ the polynomial ring. 
		For the conjugation action of $G$ on $S$, we have
		\begin{equation*} 
			K[X]^{\SL_{2}(K)} = 
			K
			[\trace,\; 
			\det,\; 
			\mathcal{P}^{1}(\det),\; 
			\sqrt{f_{4}},\; 
			h],
		\end{equation*}
		where $f_{4}$ is as in~\Cref{eq:f4-SL}, 
		and $h$ is the Jacobian of the other four invariants listed. 
		The invariant ring is a hypersurface with Hilbert series
		\begin{equation*} 
			\Hilb(S^{G}, z) = 
			\frac{1 + z^{\binom{q + 1}{2}}}
			{(1 - z)(1 - z^{2})(1 - z^{q + 1})(1 - z^{\binom{q}{2}})}	
		\end{equation*} 
		Additionally, $S^{G}$ is not $F$-regular, is a unique factorisation domain, and has $a$-invariant $-4$.
	\end{thm}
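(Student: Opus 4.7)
The plan is to mirror the outline of the proof of \Cref{mainthm}, replacing $\widehat{G} \cong \PGL_{2}(K)$ by its index-two subgroup $\PSL_{2}(K)$. The single new ingredient, responsible for the halving of the degree of the fourth primary invariant, is that in odd characteristic the set $\Omega$ of \Cref{cor:orbit-irreducible-quadraic} is stable under the involution $\ell \mapsto -\ell$: since we chose $g(x) = x^{2} + \delta$ (i.e., $\tau = 0$), both $M$ and $-M$ have characteristic polynomial $g$. As $\chr(K)$ is odd, the $q(q - 1)$ elements of $\Omega$ partition into $\binom{q}{2}$ disjoint pairs $\{\ell, -\ell\}$; I would pick a representative from each pair and let $\eta$ denote their product. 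This $\eta$ is a homogeneous element of $S$ of degree $\binom{q}{2}$ satisfying $\eta^{2} = \pm f_{4}$, and is what the statement calls $\sqrt{f_{4}}$.

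To see that $\eta \in S^{G}$, note that every $\sigma \in G = \SL_{2}(K)$ permutes $\Omega$ and respects the pairing, so $\sigma \cdot \eta = \epsilon(\sigma) \eta$ for some character $\epsilon \colon G \to \{\pm 1\}$. This $\epsilon$ is trivial on the center of $\SL_{2}(K)$ and hence factors through $\widehat{G} \cong \PSL_{2}(K)$, which is simple nonabelian for $q > 3$ and isomorphic to $A_{4}$ (with abelianisation $\mathbb{Z}/3$) for $q = 3$; in every odd case it admits no homomorphism to $\mathbb{Z}/2$, so $\epsilon$ is trivial. The elements $f_{1}, f_{2}, f_{3}, \eta$ then form a homogeneous system of parameters for $S$, because their common zero locus is contained in that of $f_{1}, f_{2}, f_{3}, f_{4}$, which is the origin by \Cref{thm:primary-invariants}. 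Set $R_{\SL} \coloneqq K[f_{1}, f_{2}, f_{3}, \eta]$. Cohen--Macaulayness of $S^{G}$ follows from \Cref{por:conjugation-invariants-CM}, so \Cite[Theorem 3.7.1]{DerksenKemper} identifies the minimal number of secondary invariants as
\begin{equation*}
\frac{1 \cdot 2 \cdot (q + 1) \cdot \binom{q}{2}}{\md{\PSL_{2}(K)}} = 2,
\end{equation*}
exhibiting $S^{G}$ as a hypersurface of the form $R_{\SL} \oplus R_{\SL} h$.

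The $a$-invariant equals $-4$ by \Cref{por:conjugation-invariants-a-invariant}, which combined with Cohen--Macaulayness pins down $\deg h = \binom{q + 1}{2}$ via the Hilbert series formula and produces the stated expression for $\Hilb(S^{G}, z)$. For an explicit $h$: since the action of $G$ factors through $\SL(V)$ by \Cref{lem:action-factors-through-SL}, the chain rule argument of \Cref{sec:missing-invariant} shows that $\Jac(f_{1}, f_{2}, f_{3}, \eta) \in S^{G}$; the relation $\Jac(f_{1}, f_{2}, f_{3}, f_{4}) = 2\eta \cdot \Jac(f_{1}, f_{2}, f_{3}, \eta)$ certifies that this Jacobian is nonzero, and being of the correct degree $\binom{q + 1}{2}$ it serves as $h$. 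Failure of $F$-regularity follows verbatim from the argument of \Cref{prop:SG-not-F-regular}, noting that the $\SL_{2}$-action remains modular and still contains no pseudoreflections. Finally, the UFD property follows from the argument of \Cref{prop:UFD} once we observe that $\Cl(S^{G}) \cong \Hom(\PSL_{2}(K), K^{\times})$ vanishes in odd characteristic, as $\PSL_{2}(K)$ has no nontrivial homomorphisms to any abelian group in the relevant cases. The main obstacle is the first paragraph: constructing $\eta$ as an honest element of $S$ via the $\pm$-pairing on $\Omega$ and certifying its $G$-invariance through the near-perfection of $\PSL_{2}(K)$; everything else is a routine translation of the arguments of \Cref{part:one}.
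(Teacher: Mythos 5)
Your construction of $\sqrt{f_{4}}$ is essentially the paper's: the paper splits $K^{\times} = P \sqcup (-P)$ and takes $f = \prod_{A \in K,\, B \in P} \lambda(A,B)$, which is exactly your ``one representative from each pair $\{\ell, -\ell\}$'', and its invariance is forced by killing a character --- the paper via $[\GL_{2}(K),\GL_{2}(K)] = \SL_{2}(K)$ (\Cref{lem:commutator}), you via the triviality of $\Hom(\PSL_{2}(K), \{\pm 1\})$; both are fine. (Do note the sign bookkeeping: your $\eta$ satisfies $\eta^{2} = \pm f_{4}$, and when $q \equiv 1 \pmod 4$ one must rescale by a square root of $-1$ to literally get $\sqrt{f_{4}}$, as the paper does.) The hsop, Cohen--Macaulayness, secondary-invariant count, $a$-invariant and Hilbert-series steps all match the paper's proof.

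The genuine gap is your last step for the explicit generator: you conclude that $\Jac(f_{1}, f_{2}, f_{3}, \eta)$ ``serves as $h$'' because it is \emph{nonzero} of degree $\binom{q+1}{2}$. That criterion is insufficient. What is needed (by the analogue of \Cref{cor:q2-degree-invariant-does-the-job}) is that this Jacobian lies \emph{outside} the Noether normalisation $K[f_{1}, f_{2}, f_{3}, \eta]$, and that subalgebra contains plenty of nonzero invariants of degree $\binom{q+1}{2}$, for instance $f_{1}^{q}\eta$ and $f_{1}^{\binom{q+1}{2}}$; nonvanishing says nothing about membership. That this is a real issue is illustrated by the remark after \Cref{sec:missing-invariant}: in characteristic two the (nonzero) Jacobian of the primary invariants \emph{does} lie in the normalisation and fails to generate. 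The missing argument is the $\tau_{ad}$-trick of \Cref{sec:missing-invariant}: $f_{1}, f_{2}, f_{3}$ are symmetric in $a, d$, and $\eta$ is fixed by $\tau_{ad}$ (since $\tau_{ad}$ sends $\lambda(A,B)$ to $\lambda(-A,B)$, it fixes the paper's choice of $f$, hence fixes $\eta = \pm f$); as $\tau_{ad}$ swaps the $a$- and $d$-columns of the Jacobian matrix, $\tau_{ad} \cdot \Jac(f_{1},f_{2},f_{3},\eta) = -\Jac(f_{1},f_{2},f_{3},\eta)$, which in odd characteristic cannot happen for a nonzero element of the $\tau_{ad}$-fixed algebra $K[f_{1},f_{2},f_{3},\eta]$. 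Two smaller points in the same step: the correct relation is $\Jac(f_{1},f_{2},f_{3},f_{4}) = \pm 2\eta \, \Jac(f_{1},f_{2},f_{3},\eta)$ (sign as above), and its nonvanishing needs the leading-monomial lemma of \Cref{sec:missing-invariant}, which does apply here since that argument never uses $\tau \neq 0$.
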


	The proof follows the same recipe as before. 
	The additional ingredient is that $f_{4}$ above is indeed a perfect square, as we now show. 
	We continue to use the same notation as in the theorem. 
	In particular, $K$ is a finite field of odd characteristic.

	\begin{lem} 
		There exists $f \in K[X]^{\SL_{2}(K)}$ such that $f^{2} = f_{4}$.
	\end{lem}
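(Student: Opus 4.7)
The plan is to exhibit a square root of $f_{4}$ explicitly by pairing up its linear factors, and then verify that this square root is $\SL_{2}(K)$-invariant via a short character argument. The linear factors $L_{A, B} \coloneqq A a + B b - \frac{A^{2} + \delta}{B} c - A d$ appearing in the defining product satisfy $L_{-A, -B} = -L_{A, B}$. Since $q$ is odd, the involution $(A, B) \mapsto (-A, -B)$ on $K \times K^{\times}$ is fixed-point-free. Fix a transversal $T \subset K \times K^{\times}$ for this involution and set $D \coloneqq \prod_{(A, B) \in T} L_{A, B}$. Grouping the $q(q-1)/2$ pairs then gives
\begin{equation*}
  f_{4}
  = -\prod_{\substack{A \in K \\ B \in K^{\times}}} L_{A, B}
  = -\prod_{(A, B) \in T} L_{A, B} \cdot (-L_{A, B})
  = (-1)^{q(q-1)/2 + 1} D^{2}.
\end{equation*}

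A case analysis on $q \bmod 4$ shows that the scalar $(-1)^{q(q-1)/2 + 1}$ is a square in $K$: when $q \equiv 3 \pmod{4}$ the exponent is even and the scalar equals $1$, while when $q \equiv 1 \pmod{4}$ the scalar equals $-1$, which lies in $(K^{\times})^{2}$ precisely because $q \equiv 1 \pmod{4}$. Pick $\alpha \in K$ with $\alpha^{2} = (-1)^{q(q-1)/2 + 1}$ and set $f \coloneqq \alpha D \in S$; then $f^{2} = f_{4}$.

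It remains to verify $f \in S^{\SL_{2}(K)}$. Since $\Omega$ is $\GL_{2}$-stable by \Cref{cor:orbit-irreducible-quadraic}, the product $f_{4}$ is $\GL_{2}$-invariant and in particular $\SL_{2}$-invariant. Thus for each $\sigma \in \SL_{2}(K)$ we have $(\sigma \cdot f)^{2} = \sigma \cdot f^{2} = f_{4} = f^{2}$, forcing $\sigma \cdot f = \pm f$; this defines a character $\chi \colon \SL_{2}(K) \to \{\pm 1\}$. Since $\SL_{2}(\mathbb{F}_{q})$ is perfect for odd $q \ge 5$ and its abelianization is $\mathbb{Z}/3$ for $q = 3$, in either case it admits no nontrivial homomorphism into $\{\pm 1\}$; thus $\chi$ is trivial and $f \in S^{\SL_{2}(K)}$.

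The only genuine delicacy is the sign bookkeeping ensuring that $(-1)^{q(q-1)/2 + 1}$ lies in $(K^{\times})^{2}$, which aligns cleanly with the two residue classes of $q$ modulo $4$. The invariance step is then painless given the well-known perfectness of $\SL_{2}(\mathbb{F}_{q})$ for odd $q \ge 5$ together with the small exceptional case $q = 3$.
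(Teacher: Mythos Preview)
Your proof is correct and follows essentially the same approach as the paper: pair the linear factors via the involution $(A,B)\mapsto(-A,-B)$, check that the resulting sign is a square in $K$ according to $q\bmod 4$, and then argue that the induced character $\chi\colon\SL_{2}(K)\to\{\pm1\}$ is trivial. The only cosmetic difference is in the last step: the paper observes that $\chi$ extends to $\GL_{2}(K)\to K^{\times}$ and invokes $[\GL_{2}(K),\GL_{2}(K)]=\SL_{2}(K)$, which handles all odd $q$ uniformly without the separate $q=3$ case.
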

	\begin{proof} 
		The idea is simple: we show that the linear factors appearing in~\Cref{eq:f4-SL} occur in pairs, up to sign;
		thus, we may construct a square root $f \in K[X]$ by considering only half the factors. 
		To this end, for $A \in K$ and $B \in K^{\times}$, we define $\lambda(A, B)$ to be the linear factor given by
		\begin{equation*} 
			\lambda(A, B) \coloneqq A a + B b - \frac{A^{2} + \delta}{B} c - A d.
		\end{equation*}
		Because $\alpha \mapsto - \alpha$ is an involution of $K^{\times}$ with no fixed points, 
		we may decompose $K^{\times}$ as $P \sqcup -P$ 
		for some subset $P \subset K^{\times}$ of cardinality $(q - 1)/2$. 
		Set
		\begin{equation*} 
			f \coloneqq 
			\prod_{\substack{A \in K \\ B \in P}} \lambda(A, B).
		\end{equation*}
		
		We first show that $f^{2} = \pm f_{4}$. 
		Noting that $\lambda(-A, -B) = -\lambda(A, B)$, we get
		\begin{align*} 
			-f_{4} 
			= 
			\prod_{\substack{A \in K \\ B \in P}} \lambda(A, B) 
			\cdot 
			\prod_{\substack{A \in K \\ B \in -P}} \lambda(A, B) 
			&= 
			\prod_{\substack{A \in K \\ B \in P}} \lambda(A, B) 
			\cdot 
			\prod_{\substack{A \in K \\ B \in P}} \lambda(-A, -B).\\
			&= (-1)^{\md{K}\md{P}}
			\prod_{\substack{A \in K \\ B \in P}} \lambda(A, B) 
			\cdot 
			\prod_{\substack{A \in K \\ B \in P}} \lambda(A, B) = (-1)^{\md{P}} f^{2}.
		\end{align*}
		Thus, $f^{2} = (-1)^{\frac{q + 1}{2}} f_{4}$. 
		If $(q + 1)/2$ is even, then we are done. 
		Otherwise, $q \equiv 1 \pmod{4}$ and we have shown $f^{2} = - f_{4}$. 
		However, for such a field, $-1$ has a square root, and we may modify $f$ with a scalar to obtain $f^{2} = f_{4}$. 

		We now show that $f$ is $\SL_{2}(K)$-invariant. 
		Let $\sigma \in \GL_{2}(K)$ be arbitrary. 
		Because $f_{4}$ is $\sigma$-invariant by \Cref{prop:primary-invariants-are-invariant}, we get that $\sigma(f)^{2} = f^{2}$. 
		Thus, $\sigma(f) \in \{f, -f\}$ for all $\sigma \in \GL_{2}(K)$, 
		giving us the homomorphism
		\begin{align*} 
			\chi \colon \GL_{2}(K) &\to K^{\times} \\
			\sigma & \mapsto \frac{\sigma(f)}{f}.
		\end{align*}
		By \Cref{lem:commutator}, we know that the above must be trivial on $\SL_{2}(K)$. 
	\end{proof}

	\begin{proof}[Proof of \Cref{thm:SL-generic-invariants}] 
		The first four invariants listed form a system of primary invariants in view of \Cref{thm:primary-invariants}. 
		We now follow the same steps as in \Cref{sec:homological-properties,sec:missing-invariant}.
		By \Cref{por:conjugation-invariants-CM}, $S^{G}$ is Cohen--Macaulay.
		The product of degrees of the primary invariants is exactly twice the order of $\PSL_{2}(K)$, giving us that $S^{G}$ is a hypersurface with secondary invariants $1$ and $\eta$. 
		We obtain $a(S^{G}) = a(S) = -4$ and conclude $\deg(\eta) = \binom{q + 1}{2}$. 
		The Jacobian of the chosen primary invariants has this degree, and one checks that this works. 
	\end{proof}

	As in \Cref{sec:GL-traceless}, 
	we now consider the conjugation action of the special linear group on the vector space $\sl$ of traceless matrices.
	We continue to denote the corresponding polynomial ring as 
	$T \coloneqq K \traceless$.
	As before, we have the projection $\pi \colon S \onto T$, and we define
	\begin{equation} \label{eq:f4-SL-traceless}
		\widetilde{f}_{4} \coloneqq
		\pi(f_{4}) =  
		-\prod_{\substack{A \in K \\ B \in K^{\times}}} 
		\left(2 A a + B b - \frac{A^{2} + \delta}{B} c \right). \\
	\end{equation}
	In this case, we have the following.

	\begin{thm} \label{thm:SL-traceless-invariants}
		Let $q$ be a power of an odd prime, 
		$K = \mathbb{F}_{q}$ the finite field with $q$ elements, 
		$G \coloneqq \SL_{2}(K)$ the special linear group, 
		and $T \coloneqq \Sym(\sl^{\ast}) = K \traceless$.
		For the conjugation action of $G$ on $T$, we have
		\begin{equation*} 
			T^{G} = 
			K
			[
			\det, 
			\mathcal{P}^{1}(\det), 
			\sqrt{\widetilde{f}_{4}}, 
			h],
		\end{equation*}
		where $\widetilde{f}_{4}$ is as in~\Cref{eq:f4-SL-traceless}, 
		and $h$ is the Jacobian of the other three invariants listed. 
		The invariant ring is a hypersurface with Hilbert series
		\begin{equation*} 
			\Hilb(T^{G}, z) = 
			\frac{1 + z^{\binom{q + 1}{2}}}
			{(1 - z^{2})(1 - z^{q + 1})(1 - z^{\binom{q}{2}})}.
		\end{equation*} 
		Additionally, $T^{G}$ is not $F$-regular, is a unique factorisation domain, and has $a$-invariant $-3$.
	\end{thm}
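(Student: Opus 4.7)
The plan is to mirror the proof of \Cref{thm:traceless-GL}(ii), now using \Cref{thm:SL-generic-invariants} in place of \Cref{mainthm}. Since $\chr K$ is odd, the $G$-stable decomposition $V = \sl \oplus K \cdot I_{2}$ splits the inclusion $\sl \into V$ in the category of $G$-modules. Dualizing and taking $G$-invariants therefore yields a surjection $\pi\colon S^{G} \onto T^{G}$ with kernel $(\trace) \cdot S^{G}$. Applying $\pi$ to the five algebra generators of $S^{G}$ from \Cref{thm:SL-generic-invariants} kills $\trace$ and produces $\det$, $\mathcal{P}^{1}(\det)$, $\sqrt{\widetilde{f}_{4}}$, and $\pi(h')$ as a generating set for $T^{G}$, where $h'$ is the secondary invariant for $S^{G}$. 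The stated Hilbert series follows immediately from $\Hilb(T^{G}, z) = (1 - z)\Hilb(S^{G}, z)$.

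For the hypersurface structure, the first three listed invariants form a homogeneous system of parameters for $T$: this is the projection of a hsop from \Cref{thm:primary-invariants}, and the same vanishing analysis (after setting $d = -a$) shows they cut out the origin in $\overline{K}^{3}$. The ring $T^{G}$ is Cohen--Macaulay by the relative transfer argument of \Cref{prop:SG-cohen-macaulay}, once one notes that $\codim \sl^{P} = 2$ in odd characteristic by a direct computation (namely, $\sl^{P} = K \cdot \smatrix{0 & 1 \\ 0 & 0}$). The product $2 \cdot (q+1) \cdot \binom{q}{2} = q(q^{2}-1) = 2\mdd{\widehat{G}}$ of the primary degrees, combined with \Cite[Theorem~3.7.1]{DerksenKemper}, forces exactly two secondary invariants, so $T^{G} = R \oplus R\eta$ is a hypersurface.

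To pin down $a(T^{G}) = -3$, observe that the $G$-action on $\sl$ factors through $\SL(\sl)$ (combine \Cref{lem:action-factors-through-SL} with the triviality of the action on $K \cdot I_{2}$) and contains no pseudoreflections (the Jordan form analysis of \Cref{lem:G-is-small} restricts cleanly to $\sl$); then \Cite[Theorem~4.4]{GoelJeffriesSingh} applies. Cohen--Macaulayness forces $\deg \eta = \binom{q+1}{2}$ via the Hilbert series. The Jacobian of the three primary invariants has exactly this degree, lies in $T^{G}$ by the chain rule together with $\SL(\sl)$-factoring, and is nonzero by a leading monomial argument as in \Cref{sec:missing-invariant}; the same column-subtraction manipulation used in the proof of \Cref{thm:traceless-GL}(ii) identifies this Jacobian with $\pi(h')$, confirming that it serves as $\eta$.

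The failure of $F$-regularity follows from the action being modular together with $a(T^{G}) = a(T)$, via \Cite[Theorem~2.18]{Jeffries:Thesis} or \Cite[\S2.2 Corollary~2]{Broer:DirectSummandProperty}. For the UFD claim, \Cite[Theorem~3.9.2]{Benson:PolynomialInvariantsBook} identifies the class group with $\Hom(\PSL_{2}(K), K^{\times})$; this vanishes since $\PSL_{2}(K)$ is perfect for $q \ge 5$, while for $q = 3$ the abelianization of $\PSL_{2}(\mathbb{F}_{3}) \cong A_{4}$ is $\mathbb{Z}/3$, which admits no nontrivial homomorphism to $\mathbb{F}_{3}^{\times} = \mathbb{Z}/2$. \textbf{The only non-routine obstacle} is verifying the Jacobian identification through $\pi$ cleanly; everything else is a direct reprise of the machinery from \Cref{part:one,sec:GL-traceless,sec:SL-generic}.
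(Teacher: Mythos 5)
Your proposal is correct and follows essentially the same route as the paper, whose proof of \Cref{thm:SL-traceless-invariants} is literally a one-line deferral: ``proceed as in \Cref{thm:traceless-GL}, using the description from \Cref{thm:SL-generic-invariants}.'' Your expansion---the split surjection $S^{G} \onto T^{G}$ with kernel $(\trace)S^{G}$, the Cohen--Macaulayness and secondary-invariant count, the $a$-invariant, the Jacobian identification through $\pi$, and the class-group computation for $\PSL_{2}(K)$ (including the $q = 3$ case)---is exactly the intended instantiation of that template.
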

	\begin{proof} 
		The proof proceeds the same way as that of \Cref{thm:traceless-GL} by making use of the corresponding description from \Cref{thm:SL-generic-invariants}.
	\end{proof}

\part{The actions of \texorpdfstring{$\OO_{2}$}{O2}} \label{part:three}
	
	We now focus on the conjugation action of the \emph{orthogonal group} $\OO_{2}(K)$.
	As usual, we let $K$ denote the finite field with $q$ elements, 
	and define
	\begin{equation*} 
		\OO_{2}(K) \coloneqq 
		\{\sigma \in \GL_{2}(K) : \sigma^{-1} = \sigma^{\tr}\}.
	\end{equation*}

	We remark that there are different notions of the orthogonal group in positive characteristic, see~\Cite[\S2.5]{KleidmanLiebeck}. 
	We shall stick with the above as our definition, though it differs from the standard notions in characteristic two. 
	This is in line with the notion of the orthogonal group in classical invariant theory~\Cite{ConciniProcesiCharacteristicFree}. 

	We use the following bijection to parametrise $G$:
	\begin{equation} \label{eq:O-bijection}
		\begin{aligned}
			\Phi \colon \{(s, t) \in K \times K : s^{2} + t^{2} = 1\} 
			\times 
			\{\pm 1\}
			&\to 
			\OO_{2}(K) \\
			(s, t, \varepsilon) &\mapsto 
			\begin{bmatrix}
				s & t \\
				-\varepsilon t & \varepsilon s
			\end{bmatrix}
		\end{aligned}
	\end{equation}
	The above is a bijection even when $q$ is even, with the understanding that the set $\{\pm 1\}$ is then a singleton.

\section{The action of \texorpdfstring{$\OO_{2}$}{O2} on \texorpdfstring{$\gl$}{gl(2)}} \label{sec:orthogonal-generic}

	We first consider the conjugation action of $G$ 
	on $V \coloneqq \MM_{2}(K)$ and in turn 
	on $S \coloneqq \Sym(V^{\ast}) = K \generic$. 
	As before, we set $\widehat{G} \le \GL(V)$ to be the image of the conjugation representation; the kernel consists precisely of the scalar matrices. 
	The only scalar orthogonal matrices are $\pm I_{2}$, giving us
	\begin{equation*} 
		\mdd{\widehat{G}} = 
		\begin{cases}
			\md{G} & \text{ if $q$ is even}, \\
			\md{G}/2 & \text{ if $q$ is odd}.
		\end{cases}
	\end{equation*}

	With the notation of~\Cref{eq:O-bijection}, we calculate the conjugation action to be
	\begin{equation} \label{eq:O-action}
		\Phi(s, t, \varepsilon) \colon 
		\begin{bmatrix}
			a & b \\
			c & d
		\end{bmatrix}
		\mapsto 
		\begin{bmatrix}
			s^{2} a - s t \varepsilon (b + c) + t^{2} d 
			& st (a - d) + \varepsilon s^{2} b - \varepsilon t^{2} c  \\
			s t (a - d) - \varepsilon t^{2} b + \varepsilon s^{2} c & 
			t^{2} a + \varepsilon s t (b + c) + s^{2} d
		\end{bmatrix}
	\end{equation}
	Thus, $(b - c) \mapsto \varepsilon (s^{2} + t^{2}) (b - c) = \pm (b - c)$. 
	The above also gives us that the stabiliser of $a$ consists of the matrices corresponding to $t = 0$; 
	this gives us the matrices $\smatrix{\pm 1 & 0 \\ 0 & \pm 1}$. 
	Thus, we get the following lemma.

	\begin{lem} \label{lem:orthogonal-invariants-stabiliser}
		We have the following for the $G$-action on $S$.
		\begin{enumerate}[label=(\roman*)]
			\item If $q$ is even, then $b - c$ is invariant, and the orbit of $a$ has size $\md{G}$.
			\item If $q$ is odd, then $(b - c)^{2}$ is invariant, and the orbit of $a$ has size $\md{G}/4$.
			\hfill $\square$
		\end{enumerate}
	\end{lem}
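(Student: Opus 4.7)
The plan is to extract both parts directly from the explicit action formula displayed in \Cref{eq:O-action} combined with the parametrisation~\Cref{eq:O-bijection}, using the orbit–stabiliser theorem for the second claim.

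\textbf{Semi-invariance of $b-c$.} First, reading off the $(1,2)$ and $(2,1)$ entries of~\Cref{eq:O-action} gives
\begin{equation*}
  \Phi(s,t,\varepsilon) \cdot (b - c)
  = \varepsilon(s^{2} + t^{2})(b - c)
  = \varepsilon (b - c),
\end{equation*}
since $s^{2} + t^{2} = 1$ on the parametrising set. When $q$ is even, $\varepsilon$ is the singleton $1$ (per the convention following~\Cref{eq:O-bijection}), so $b - c$ is fixed. When $q$ is odd, $\varepsilon \in \{\pm 1\}$, and squaring yields the invariance of $(b - c)^{2}$.

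\textbf{Stabiliser of $a$.} From the $(1,1)$ entry of~\Cref{eq:O-action},
\begin{equation*}
  \Phi(s, t, \varepsilon) \cdot a
  = s^{2} a - s t \varepsilon (b + c) + t^{2} d,
\end{equation*}
which equals $a$ precisely when $s^{2} = 1$, $t^{2} = 0$, and $st\varepsilon = 0$; equivalently, $t = 0$ and $s = \pm 1$. In even characteristic, only $s = 1$ (with the singleton $\varepsilon$) satisfies this, so the stabiliser is trivial and the orbit of $a$ has size $\md{G}$. In odd characteristic, all four combinations $s, \varepsilon \in \{\pm 1\}$ (with $t = 0$) contribute, yielding a stabiliser of order $4$, and hence an orbit of size $\md{G}/4$.

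Both parts are short computations with no genuine obstacle; the only mild bookkeeping is ensuring the convention that, in characteristic two, the $\{\pm 1\}$ factor in~\Cref{eq:O-bijection} collapses to a singleton, so that the stabiliser count and the orbit size are consistent with $\mdd{\widehat{G}} = \md{G}$ in that case. The orbit–stabiliser theorem then supplies the stated orbit sizes.
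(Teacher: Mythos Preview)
Your proof is correct and follows exactly the same approach as the paper: the paper also reads off the action on $b-c$ and the stabiliser of $a$ directly from~\Cref{eq:O-action}, obtaining the matrices $\smatrix{\pm 1 & 0 \\ 0 & \pm 1}$ as the stabiliser. You have simply spelled out the coefficient comparison and the orbit--stabiliser step more explicitly than the paper, which gives these observations in the paragraph preceding the lemma and marks the lemma itself with a $\square$.
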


	In view of the above lemma, we now define the following invariants. 
	\begin{equation} \label{eq:O-primary-invariants}
		\begin{aligned} 
			f_{1} &\coloneqq a + d, \\
			f_{2} &\coloneqq ad - b c, \\
			f_{3} &\coloneqq 
			\begin{cases}
				b - c & \text{ if $q$ is even}, \\
				(b - c)^{2} & \text{ if $q$ is odd},
			\end{cases} 
			\\
			f_{4} &\coloneqq N(a),
		\end{aligned}
	\end{equation}
	where $N(a)$ denotes the orbit product of $a$, i.e., the product of all the polynomials in the orbit of $a$. 
	We now show that they form a system of primary invariants.

	\begin{thm} \label{thm:orthogonal-primary-invariants}
		The invariants $f_{1}, \ldots, f_{4}$ form a system of parameters for $S$. 
		We have $\prod_{i = 1}^{4} \deg(f_{i}) = 2 \mdd{\widehat{G}}$.
	\end{thm}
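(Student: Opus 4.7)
The plan is to mimic the proof of \Cref{thm:primary-invariants}, showing that the common zero locus of $f_1, f_2, f_3, f_4$ in $\overline{K}^4$ is just the origin. Let $(a, b, c, d) \in \overline{K}^4$ be a common zero. The equation $f_1 = 0$ yields $d = -a$, and $f_3 = 0$ yields $b = c$ in either parity, since $(b - c)^2 = 0$ also forces $b = c$. Substituting into $f_2 = 0$ gives $a^2 + b^2 = 0$. The remaining condition $f_4 = N(a) = 0$ says that some element $\sigma \cdot a$ in the orbit vanishes at the point; using the explicit formula~\eqref{eq:O-action} together with the substitutions above, this becomes
\begin{equation*}
	(s^2 - t^2) a - 2 s t \varepsilon b = 0 \quad \text{for some } (s, t, \varepsilon) \text{ with } s^2 + t^2 = 1.
\end{equation*}

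In characteristic two the equation collapses to $(s + t)^2 a = a = 0$, after which $a^2 + b^2 = 0$ gives $b = 0$. In odd characteristic, if $a = 0$ the conclusion is immediate; otherwise I write $b = \varepsilon' i \, a$ for some $\varepsilon' \in \{\pm 1\}$ and $i^2 = -1$ in $\overline{K}$, and the orbit equation rearranges into the perfect square $(s - \varepsilon \varepsilon' i \, t)^2 = 0$. This forces $s = \varepsilon \varepsilon' i \, t$, and hence $s^2 + t^2 = -t^2 + t^2 = 0$, contradicting $s^2 + t^2 = 1$. Either way $(a, b, c, d) = 0$, as required.

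For the degree identity, I use \Cref{lem:orthogonal-invariants-stabiliser}: the orbit of $a$ has size $\md{G}$ when $q$ is even and $\md{G}/4$ when $q$ is odd, giving
\begin{equation*}
	\prod_{i = 1}^{4} \deg(f_i) =
	\begin{cases}
		1 \cdot 2 \cdot 1 \cdot \md{G} = 2\md{G} & \text{if $q$ is even}, \\
		1 \cdot 2 \cdot 2 \cdot \md{G}/4 = \md{G} & \text{if $q$ is odd}.
	\end{cases}
\end{equation*}
Combining this with the descriptions $\mdd{\widehat{G}} = \md{G}$ for $q$ even (as $-I_2 = I_2$) and $\mdd{\widehat{G}} = \md{G}/2$ for $q$ odd (kernel $\{\pm I_2\}$) recorded at the start of \Cref{sec:orthogonal-generic}, both cases yield $2 \mdd{\widehat{G}}$.

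The main obstacle is the $f_4 = 0$ step: unlike the situation in \Cref{thm:primary-invariants}, the orbit of $a$ is large and each factor contributes only one linear equation out of many possible ones. The key observation is that once $f_1, f_2, f_3$ reduce the problem to a single linear relation in $a$ and $b$, the resulting quadratic expression in $(s, t)$ factors over $\overline{K}$ as a perfect square, which together with the norm condition $s^2 + t^2 = 1$ is visibly inconsistent.
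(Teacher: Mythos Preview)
Your proof is correct, but the paper's argument is considerably shorter because it exploits a symmetry you overlook. The paper observes that the linear factors of $f_{4} = N(a)$ form a single $G$-orbit, while $f_{1}, f_{2}, f_{3}$ are $G$-invariant; hence, after acting by a suitable element of $G$, one may assume without loss of generality that the specific factor $a$ vanishes at the point. Then $a = 0$, $f_{1} = 0$ gives $d = 0$, $f_{3} = 0$ gives $b = c$, and $f_{2} = -bc = -b^{2} = 0$ gives $b = c = 0$, with no further work. Your approach instead fixes the point first and analyses a generic orbit element, which forces you into the case split on characteristic and the perfect-square manipulation with $i$. Both are valid; the paper's trick of moving the point rather than the factor is what eliminates the computation, and it is a technique worth internalising for orbit-product invariants more generally. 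Your degree count agrees with the paper's.
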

	\begin{proof} 
		We proceed as in the proof of \Cref{thm:primary-invariants} to show that the only common zero of these polynomials is the origin. 
		For $f_{4}$ to vanish, one of its linear factors must vanish. 
		Because the linear factors form a $G$-orbit and the other $f_{i}$ are $G$-invariant, we may assume without loss of generality that $a = 0$. 
		It is now clear that the vanishing of the other polynomials forces $b = c = d = 0$ as well. 
		The second statement follows from the knowledge of $\deg(N(a))$ from \Cref{lem:orthogonal-invariants-stabiliser}. 
	\end{proof}

	\begin{thm} \label{thm:O-generic-invariants}
		Let $K$ be a finite field, $G \coloneqq \OO_{2}(K)$ the orthogonal group acting via conjugation on $S \coloneqq K \generic$. 
		The invariant ring $S^{G}$ is a hypersurface of the form $K[f_{1}, \ldots, f_{4}, h]$, 
		where the $f_{i}$ are as in~\Cref{eq:O-primary-invariants}, 
		and $h$ is an invariant of degree 
		$-4 + \sum_{i = 1}^{4} \deg(f_{i})$. 
		The Hilbert series of $S^{G}$ is given as
		\begin{equation*} 
			\Hilb(S^{G}, z) 
			= 
			\frac{1 + z^{\deg h}}
			{(1 - z)(1 - z^{2})(1 - z^{\deg f_{3}})(1 - z^{\deg f_{4}})}
		\end{equation*}
		Additionally, the invariant ring $S^{G}$ 
	  has $a$-invariant $-4$,
		is $F$-regular precisely when $\chr(K) \neq 2$,
	  and is a unique factorisation domain precisely when $\chr(K) = 2$. 
	  When $\chr(K) \neq 2$, we may take $h$ to be $\Jac(f_{1}, \ldots, f_{4})$, 
	  and the class group of $S^{G}$ is $(\mathbb{Z}/2)^{2}$. 
	\end{thm}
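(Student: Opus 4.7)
The proof follows the template of Theorem~1.1 from Part~I. First, $S^G$ is Cohen--Macaulay by \Cref{por:conjugation-invariants-CM}. Combining \Cref{thm:orthogonal-primary-invariants} with \Cite[Theorem~3.7.1]{DerksenKemper}, the number of minimal secondary invariants is $\prod_i \deg(f_i) / \mdd{\widehat{G}} = 2$, so $S^G = R \oplus R\eta$ as an $R$-module for $R = K[f_1, f_2, f_3, f_4]$ and some homogeneous $\eta \in S^G$. In particular, $S^G$ is a hypersurface. By \Cref{por:conjugation-invariants-a-invariant}, $a(S^G) = a(S) = -4$, which equals $\deg \Hilb(S^G)$ by Cohen--Macaulayness; solving this yields $\deg \eta = -4 + \sum_i \deg f_i$ and the claimed Hilbert series.

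For $F$-regularity: in odd characteristic, $\mdd{\widehat{G}}$ divides $2(q^2 - 1)$ and is coprime to $\chr K$, so the Reynolds operator splits $S^G \into S$, exhibiting $S^G$ as a direct summand of a polynomial ring and hence $F$-regular. In characteristic two, the action is modular, and the argument of \Cref{prop:SG-not-F-regular}---combining $a(S^G) = a(S)$ with \Cite[Theorem~4.4]{GoelJeffriesSingh}---applies verbatim to conclude $S^G$ is not $F$-regular.

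To realise $\eta$ as the Jacobian in odd characteristic, I mimic \Cref{sec:missing-invariant}. The group $\widehat{G}$ lies in $\SL(V)$ by the argument of \Cref{lem:action-factors-through-SL}, so $h \coloneqq \Jac(f_1, \ldots, f_4)$ is $G$-invariant by the chain rule; its nonvanishing follows from a leading-monomial calculation. To show $h \notin R$, introduce $\Gamma \coloneqq \langle \widehat{G}, \tau \rangle \le \GL(V)$, where $\tau$ is the transposition $M \mapsto M^{\tr}$. Each $f_i$ is $\tau$-invariant---for $f_4 = N(a)$, this uses that every element of the $\widehat{G}$-orbit of $a$ is symmetric in $b$ and $c$, as evident from \Cref{eq:O-action}---and a direct computation shows $\tau \notin \widehat{G}$ (no conjugation on $2 \times 2$ matrices realises the transpose). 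Hence $\Gamma \supsetneq \widehat{G}$, so $S^\Gamma \subsetneq S^G$; combined with the rank-$2$ extension $R \subset S^G$, this forces $R = S^\Gamma$. Since $\det(\tau) = -1$ and the $f_i$ are $\tau$-fixed, the chain rule gives $\tau(h) = -h$, so in odd characteristic $h \notin S^\Gamma = R$, and $h$ may be taken as $\eta$.

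Finally, for the class group and UFD property: the analogue of \Cref{lem:G-is-small} shows the action has no pseudoreflections in any characteristic, since for non-scalar $\sigma \in \OO_2(K)$ the centraliser of $\sigma$ in $V$ has dimension at most~$2$. By \Cite[Theorem~3.9.2]{Benson:PolynomialInvariantsBook}, $\Cl(S^G) \cong \Hom(\widehat{G}, K^\times)$. In characteristic two, $\mdd{\widehat{G}} = q$ is a $2$-power while $K^\times$ has odd order, so this $\Hom$ vanishes and $S^G$ is a UFD. In odd characteristic, writing $\OO_2(K) = \SO_2(K) \rtimes \mathbb{Z}/2$ with $\md{\SO_2(K)} \in \{q - 1, q + 1\}$ divisible by~$4$ (so that $-I \in [\OO_2(K), \OO_2(K)]$), one computes $\widehat{G}/[\widehat{G}, \widehat{G}] \cong (\mathbb{Z}/2)^2$; the resulting $\Hom$ into $K^\times$ is $(\mathbb{Z}/2)^2$. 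I expect the main obstacle to be this abelianisation computation---particularly tracking how the centre interacts with the commutator subgroup in each parity class of $q$---together with verifying that $\tau \notin \widehat{G}$ and the pseudoreflection claim, though all reduce to routine extensions of arguments from Part~I.
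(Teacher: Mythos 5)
Your proposal is correct and follows essentially the same route as the paper: Cohen--Macaulayness and the $a$-invariant via the porisms, the count of secondary invariants via \Cite[Theorem 3.7.1]{DerksenKemper} together with \Cref{thm:orthogonal-primary-invariants}, $F$-(non)regularity via (non)modularity and the equality $a(S^{G}) = a(S)$, and the class group via Benson's theorem plus the abelianisation of $\OO_{2}(K)$. Where the paper merely defers to \Cref{part:one} or cites Dieudonn\'e, you supply the details yourself---the transpose-generated group $\Gamma = \langle \widehat{G}, \tau \rangle$ to show $\Jac(f_{1}, \ldots, f_{4}) \notin R$, and the dihedral-group computation of $\widehat{G}/[\widehat{G}, \widehat{G}]$---and both are sound; the one step you leave implicit, the nonvanishing of the Jacobian, does hold (for instance because $\operatorname{Frac}(S)$ is Galois, hence separable, over $\operatorname{Frac}(S^{\Gamma}) = \operatorname{Frac}(R)$), matching the paper's own level of detail there.
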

	\begin{proof} 
		By \Cref{por:conjugation-invariants-CM,por:conjugation-invariants-a-invariant}, 
		we know that $S^{G}$ is Cohen--Macaulay with $a$-invariant~$-4$. 
		The rest of the proof is as we have seen in \Cref{part:one}:
		By \Cref{thm:orthogonal-primary-invariants}, we see that $S^{G}$ is a hypersurface with the degree of $h$ as listed. 
		Because $a(S^{G}) = a(S)$,
		we see that $S^{G}$ is $F$-regular precisely when the action is nonmodular. 
		In view of \Cref{lem:order-orthogonal-group}, this is equivalent to $\chr(K) \neq 2$. 
		Next, we have $\Cl(S^{G}) \cong \Hom(\widehat{G}/[\widehat{G}, \widehat{G}], K^{\times})$; 
		this is $(0)$ when $q$ is even because $\widehat{G}$ and $K^{\times}$ then have coprime orders. 
		When $q$ is odd, it is known that $G/[G, G] \cong (\mathbb{Z}/2)^{2}$, by \Cite[pp. 50, 55]{Dieudonne:GeometrieClassiques}.
		% To elaborate: 
		% this is group of order $4$---\Cite[p. 400]{Jacobson:BAI} or \Cite[p. 55]{Dieudonne:GeometrieClassiques}
		% and every nontrivial element has order 2---\Cite[p. 50]{Dieudonne:GeometrieClassiques}
		Because $-I$ is the commutator of $\smatrix{0 & 1 \\ -1 & 0}$ and $\smatrix{1 & 0 \\ 0 & -1}$, 
		we get that 
		$\widehat{G}/[\widehat{G}, \widehat{G}] \cong 
		G/[G, G] \cong 
		(\mathbb{Z}/2)^{2}$ 
		and the result follows after noting that $K^{\times}$ is a cyclic group of even order. 
	\end{proof}

	We now record the order of $G$ and the degrees of the above invariants. 

	\begin{lem} \label{lem:order-orthogonal-group}
		Continuing with the earlier notation, we have the following.
		\begin{enumerate}[label=(\roman*)]
			\item If $q$ is even, then $\md{G} = \mdd{\widehat{G}} = q$. 
			We have $\deg(f_{3}) = 1$, 
			$\deg(f_{4}) = q$, 
			and $\deg(h) = q$.
			\item If $q \equiv 1 \pmod{4}$, 
			then $\md{G}/2 = \mdd{\widehat{G}} = q - 1$. \newline
			We have $\deg(f_{3}) = 2$, 
			$\deg(f_{4}) = (q - 1)/2$, 
			and $\deg(h) = (q + 1)/2$.
			\item If $q \equiv -1 \pmod{4}$, 
			then $\md{G}/2 = \mdd{\widehat{G}} = q + 1$. \newline
			We have $\deg(f_{3}) = 2$, 
			$\deg(f_{4}) = (q + 1)/2$, 
			and $\deg(h) = (q + 3)/2$.
		\end{enumerate}
	\end{lem}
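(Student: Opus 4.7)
The plan is to use the bijection $\Phi$ from~\Cref{eq:O-bijection} to reduce the computation of $\md{G}$ to counting points on the affine curve $\{s^{2} + t^{2} = 1\}$ in $K \times K$, and then to read off the remaining degrees from the orbit-stabiliser theorem together with the formula $\deg(h) = -4 + \sum_{i = 1}^{4} \deg(f_{i})$ from \Cref{thm:O-generic-invariants}.

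First I would handle the count $N \coloneqq \#\{(s, t) \in K \times K : s^{2} + t^{2} = 1\}$. In characteristic two, $s^{2} + t^{2} = (s + t)^{2}$, so the equation reduces to $s + t = 1$, giving $N = q$; since $\{\pm 1\} = \{1\}$, the bijection $\Phi$ then yields $\md{G} = q$, and further $\mdd{\widehat{G}} = \md{G}$ as $-I = I$. In odd characteristic, the standard evaluation of the associated quadratic character sum gives $N = q - \chi(-1)$, where $\chi$ is the Legendre symbol on $K^{\times}$; thus $N = q - 1$ if $q \equiv 1 \pmod{4}$ and $N = q + 1$ if $q \equiv -1 \pmod{4}$. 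Multiplying by $\md{\{\pm 1\}} = 2$ through $\Phi$ and quotienting by $\{\pm I\}$ gives $\mdd{\widehat{G}} = q \mp 1$ as claimed.

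Next, $\deg(f_{3})$ is immediate from the definition in~\Cref{eq:O-primary-invariants}. To compute $\deg(f_{4}) = \deg(N(a))$, observe that the orbit of $a$ has size $\mdd{\widehat{G}}/\mdd{\mathrm{Stab}_{\widehat{G}}(a)}$; from~\Cref{eq:O-action}, the stabiliser consists exactly of the images of those $\Phi(s, t, \varepsilon)$ with $t = 0$, i.e., the diagonal orthogonal matrices. There is one such image in characteristic two and two such images in odd characteristic, so in each case one obtains the degrees listed. (This matches \Cref{lem:orthogonal-invariants-stabiliser}, which records the same count as an orbit of size $\md{G}$ or $\md{G}/4$ in $G$.) Finally, $\deg(h)$ is read off by substituting into $\deg(h) = -4 + \deg(f_{1}) + \deg(f_{2}) + \deg(f_{3}) + \deg(f_{4})$: in the even case this is $-4 + 1 + 2 + 1 + q = q$; for $q \equiv 1 \pmod{4}$ it is $-4 + 1 + 2 + 2 + (q-1)/2 = (q+1)/2$; and for $q \equiv -1 \pmod{4}$ it is $-4 + 1 + 2 + 2 + (q+1)/2 = (q+3)/2$.

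The only genuinely nontrivial step is the count of $\mathbb{F}_{q}$-points on $s^{2} + t^{2} = 1$ in odd characteristic, and even this is a classical character-sum evaluation that can alternatively be done by elementary means: in the case $q \equiv 1 \pmod{4}$ one factors $s^{2} + t^{2} = (s + it)(s - it)$ where $i^{2} = -1$, parametrises the conic, and counts, while in the case $q \equiv -1 \pmod{4}$ one uses the norm map $\mathbb{F}_{q^{2}}^{\times} \to \mathbb{F}_{q}^{\times}$, whose fibre over $1$ has size $q + 1$. Everything else is bookkeeping.
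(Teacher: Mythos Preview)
Your proposal is correct and follows essentially the same route as the paper's proof: both reduce the computation of $\md{G}$ to the count of $K$-points on the circle $s^{2}+t^{2}=1$ via the bijection~$\Phi$, handle the even-characteristic case by $(s+t)^{2}=1$, and invoke the classical count $q-\chi(-1)$ for odd $q$. The paper simply states that the degree computations follow from the earlier results and omits the bookkeeping you spell out; your extra elementary arguments for the circle count (factoring over $\mathbb{F}_{q}(i)$ and the norm-map description) are not in the paper but are standard alternatives.
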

	\begin{proof}
		We only need to calculate $\md{G}$.
		In view of~\Cref{eq:O-bijection}, this comes down to calculating the size of the circle 
		${S \coloneqq \{(s, t) \in K \times K : s^{2} + t^{2} = 1\}}$. 
		When $q$ is even, the condition $s^{2} + t^{2} = 1$ is equivalent to $s + t = 1$ and the result follows. 
		When $q$ is odd, it is well-known that the cardinality of $S$ is $q - (-1)^{(q-1)/2}$; 
		alternatively, the order of $G$ may be found in~\Cite[\S6.10]{Jacobson:BAI} after noting that $-1$ is a square in $K$ precisely when $q \equiv 1 \pmod{4}$.
	\end{proof}

\section{The action of \texorpdfstring{$\OO_{2}$}{O2} on \texorpdfstring{$\oo$}{o(2)} and symmetric matrices}
	
	The polynomial invariants for the adjoint representation of $G = \OO_{2}(K)$ are easily calculated.

	\begin{rem} \label{rem:O-adjoint-invariants}
		The group $G$ is one-dimensional and its Lie algebra $\oo \le \MM_{2}(K)$ consists of alternating matrices; 
		in fact, the adjoint representation is simply the map $\det \colon G \to K^{\times}$. 
		Thus, if we let $K[b]$ denote $\Sym(\oo^{\ast})$, then $K[b]^{G}$ is either $K[b^{2}]$ or $K[b]$ according to whether the characteristic is either odd or even. 
	\end{rem}

	A more interesting action is obtained by noticing that the space of symmetric matrices is stable under conjugation by the orthogonal group. 
	We continue with the notation from the previous section and further define
	\begin{equation*} 
		\sym \coloneqq \{M \in V : M = M^{\tr}\} \andd T \coloneqq \Sym(\sym^{\ast}).
	\end{equation*}

	The invariant ring turns out to be a polynomial ring, regardless of the characteristic, as we now show. 
	We have the suggestive coordinates for $T$ given as 
	$T = K \symmetric$. 
	As in \Cref{sec:GL-traceless}, we have the $G$-equivariant $K$-algebra map $\pi \colon S \to T$ given by
	$
		\smatrix{
			a & b \\
			c & d
		}
		\mapsto 
		\smatrix{
			a & b \\
			b & d
		}
	$. 
	Moreover, as before, the inclusion $W \into V$ splits $G$-equivariantly in odd characteristic, and we obtain the invariant ring as $T^{G} = \pi(S^{G})$ when $\chr(K)$ is odd. 
	The following theorem describes the invariant ring. 
	As before, we continue to interpret $\trace$, $\det$, $N(a)$ as elements of $T$. 

	\begin{thm} \label{thm:O-symmetric-invariants}
		Let $K \coloneqq \mathbb{F}_{q}$ be a finite field with $q$ elements. 
		Consider the conjugation action of $G \coloneqq \OO_{2}(K)$ on $T = K[X_{2 \times 2}^{\text{sym}}] = K \symmetric$. 
		Then, $T^{G}$ is a polynomial ring given as
		\begin{equation*} 
			T^{G} =
			K[
			\trace,\; 
			\det,\; 
			f_{3}
			],
		\end{equation*}
		where $f_{3}$ is chosen as follows:
		\begin{enumerate}[label=(\roman*)]
			\item If $q$ is odd, then $f_{3} = N(a)$; 
			we have $\deg(f_{3}) = \md{G}/4$, and
			$a(S^{G}) = -\left(3 + \frac{1}{2}\left(q - (-1)^{(q-1)/2}\right)\right)$.
			\item If $q = 2^{e + 1}$ with $e \ge 0$, then
			\begin{equation*} 
				f_{3} 
				\coloneqq 
				\sum_{k = 0}^{e} b^{2^{k}} (a + d)^{2^{e} - 2^{k}};
			\end{equation*}
			we have $\deg(f_{3}) = \md{G}/2 = q/2$, and
			$a(S^{G}) = -\left(3 + \frac{q}{2}\right)$.
		\end{enumerate}
	\end{thm}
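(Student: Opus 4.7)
My plan is to follow the template used in the proofs of \Cref{mainthm} and \Cref{thm:O-generic-invariants}: establish that $\trace$, $\det$, $f_{3}$ are homogeneous $G$-invariants that form a system of parameters, then invoke Cohen--Macaulayness of $T^{G}$ together with the degree-counting criterion of~\Cite[Theorem 3.7.1]{DerksenKemper} to conclude that $T^{G}$ is a polynomial ring on these three generators. The $a$-invariant is then the negative of the sum of the degrees.

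The invariance of $\trace$ and $\det$ is immediate, and in odd characteristic $N(a)$ is invariant by construction as an orbit product. The substantive case is $q = 2^{e+1}$. Using~\Cref{eq:O-action} with $c$ identified with $b$, the action of $\sigma = \Phi(s, t, 1)$ on $T$ gives $\sigma(a + d) = (s^{2} + t^{2})(a + d) = a + d$ and $\sigma(b) = s t (a + d) + b$ (using $s^{2} + t^{2} = 1$ in characteristic two). Applying the Frobenius term-by-term yields
\begin{equation*}
	\sigma(f_{3}) - f_{3} = (a + d)^{2^{e}} \sum_{k = 0}^{e} (s t)^{2^{k}},
\end{equation*}
and, writing $t = s + 1$ so that $s t = s^{2} + s$, the inner sum telescopes to $s^{q} + s$, which vanishes for $s \in \mathbb{F}_{q}$.

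To see that $\trace, \det, f_{3}$ form a system of parameters, I would argue as in \Cref{thm:orthogonal-primary-invariants}: in odd characteristic, if $N(a)$ vanishes on a point of $\overline{K}^{3}$, then some $\sigma(a)$ does, and because $\trace, \det$ are $G$-invariant one may assume $a = 0$, whence $\trace = d$ and $\det = -b^{2}$ force $b = d = 0$. In characteristic two, substituting $a + d = 0$ kills every summand of $f_{3}$ with $k < e$, leaving $f_{3}|_{a + d = 0} = b^{2^{e}}$; combined with $\det = -a^{2} - b^{2} = (a + b)^{2}$, this forces $a = b = d = 0$.

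Finally, the kernel of $G \to \GL(W)$ consists of the scalar orthogonal matrices, which is $\{\pm I\}$ in odd characteristic and trivial in characteristic two. Together with \Cref{lem:order-orthogonal-group}, this gives $\mdd{\widehat{G}} = \md{G}/2$ in odd characteristic and $\mdd{\widehat{G}} = q$ in characteristic two; in both cases the product $1 \cdot 2 \cdot \deg(f_{3})$ equals $\mdd{\widehat{G}}$. Then \Cref{por:conjugation-invariants-CM} and~\Cite[Theorem 3.7.1]{DerksenKemper} force exactly one secondary invariant (namely $1$), giving $T^{G} = K[\trace, \det, f_{3}]$, and the $a$-invariant formula then reads off from \Cref{lem:order-orthogonal-group}. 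The main obstacle is the invariance of $f_{3}$ in characteristic two: the shape of $f_{3}$ is engineered precisely so that the twist $s t (a + d)$ in $\sigma(b)$ becomes a $2^{k}$-th power after Frobeniusing, telescoping against $(a + d)^{2^{e} - 2^{k}}$ via the orthogonality constraint $s + t = 1$.
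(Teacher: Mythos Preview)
Your approach is essentially the paper's: verify that $\trace$, $\det$, $f_{3}$ are invariant and form a homogeneous system of parameters, then use Cohen--Macaulayness together with the degree-product criterion from~\Cite[Theorem 3.7.1]{DerksenKemper} to conclude. Your invariance computation in characteristic two and the system-of-parameters arguments match the paper's almost verbatim; the paper parametrises $G$ by $t$ alone (writing $\varphi(t) = \smatrix{1-t & t \\ t & 1-t}$) rather than by $(s,t)$, but since $st = t - t^{2}$ in characteristic two, the telescoping sum is the same.

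One correction: \Cref{por:conjugation-invariants-CM} is stated for the four-variable ring $S$, not for $T = \Sym(W^{\ast})$, so it does not apply as written. The paper instead notes directly that $I_{2} \in W^{G}$, hence $\codim(W^{G}) \le 2$, and invokes~\Cite[Theorem 3.9.2]{CampbellWehlau:ModularInvariantTheory}; in odd characteristic $\md{G}$ is coprime to $p$ and Cohen--Macaulayness is automatic anyway. In the same vein, your assertions that the kernel of $G \to \GL(W)$ is $\{\pm I\}$ and that the orbit of $a$ in $T$ has size $\md{G}/4$ are correct but are facts about the action on $W$, not $V$, so they are not literally supplied by \Cref{lem:orthogonal-invariants-stabiliser} or \Cref{lem:order-orthogonal-group}; a one-line check suffices in each case.
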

	\begin{proof} 
		Note that $\smatrix{1 & 0 \\ 0 & 1} \in W^{G}$, and so $\codim(W^{G}) \le 2$, giving us that $T^{G}$ is Cohen--Macaulay. 
		In either case, the product of the degrees of the claimed generators is equal to $\md{G/\{\pm I\}}$, 
		and the $a$-invariant is the negative of the sum of degrees. 
		Thus, it suffices to show that the claimed generators are indeed invariant and that they form a system of parameters. 
		For odd characteristic, this follows from the work done in \Cref{sec:orthogonal-generic}.
		Thus, we now assume that $q = 2^{e + 1}$. 
		In view of~\Cref{eq:O-bijection} and~\Cref{eq:O-action}, 
		the group $G$ consists of matrices of the form 
		$\varphi(t) = \smatrix{1 - t & t \\ t & 1 - t}$ for $t \in K$, 
		and we have $\varphi(t) \cdot b = b + (t - t^{2})(a + d)$. 
		As $a + d$ is invariant, the Freshman's Dream~\Cite{FreshmanDream} yields
		\begin{align*} 
			\varphi(t) \cdot f_{3} = 
			\sum_{k = 0}^{e} 
			\left(b^{2^{k}} + (t - t^{2})^{2^{k}} (a + d)^{2^{k}}\right) 
			(a + d)^{2^{e} - 2^{k}} 
			= f_{3} + \sum_{k = 0}^{e} (t^{2^{k}} - t^{2^{k + 1}})(a + d)^{2^{e}}.
		\end{align*}
		The sum telescopes to give us $\varphi(t) \cdot f_{3} = f_{3} + (t - t^{q}) (a + d)^{q/2}$. 
		Because $K$ is the field with $q$ elements, we have $t = t^{q}$, giving us the invariance of $f_{3}$. 
		It is clear that they form a system of parameters because 
		$(a + d, ad - b^{2}, f_{3}) T = (a + d, ad - b^{2}, b^{2^{e}}) T$, 
		giving us the radical as $(a, b, d) T$.
	\end{proof}

\appendix

\section{Steenrod operations} \label{sec:steenrod}

		In this section, we describe how one may produce new invariants from old using the Steenrod operations, a feature available over finite fields. 
		A reference for the material presented here is \Cite[Chapter 11]{Smith:PolynomialInvariantsBook}. 

		Let $q$ be a prime power, and $K = \mathbb{F}_{q}$ the finite field with $q$ elements. 
		Let $S = K[x_{1}, \ldots, x_{n}]$ be a polynomial ring over $K$, and $S[T]$ the polynomial ring obtained by adjoining an additional variable. 
		We define the $K$-algebra map $\mathcal{P} \colon S \to S[T]$ by defining it on the variables as $x_{i} \mapsto x_{i} + x_{i}^{q}T$. 
		This definition is coordinate-free in the sense that $\mathcal{P}(x) = x + x^{q} T$ for any homogeneous linear element $x \in S$. 
		For any nonnegative integer $i \ge 0$ and $f \in S$, we define $\mathcal{P}^{i}(f)$ to be the coefficient of $T^{i}$ in $\mathcal{P}(f)$. 
		In other words, we have $K$-linear maps $\mathcal{P}_{i} \colon S \to S$ satisfying,
		for all $f \in S$,
		the equation
		\begin{equation*} 
			\mathcal{P}(f) = \sum_{i \ge 0} \mathcal{P}^{i}(f) T^{i}.
		\end{equation*}

		These operations are natural in the following sense: 
		Let $R = K[y_{1}, \ldots, y_{m}]$ be a polynomial ring, and $\varphi \colon S \to R$ a degree-preserving $K$-algebra map. 
		Then, the diagram below on the left commutes.
		In turn, so does the diagram below on the right, for all $i \ge 0$.
		\begin{equation*} 
			\begin{tikzcd}
				S \arrow[r, "\varphi"] \arrow[d, "\mathcal{P}"'] & R \arrow[d, "\mathcal{P}"] \\
				S[T] \arrow[r, "{\varphi[T]}"'] & R[T]
			\end{tikzcd}
			\qquad \qquad
			\begin{tikzcd}
				S \arrow[r, "\varphi"] \arrow[d, "\mathcal{P}^{i}"'] & R \arrow[d, "\mathcal{P}^{i}"] \\
				S \arrow[r, "{\varphi}"'] & R
			\end{tikzcd}
		\end{equation*}
		In particular, if $\varphi \colon S \to S$ is a degree-preserving $K$-algebra automorphism and $f \in S$ is fixed by $\varphi$, then so is $\mathcal{P}^{i}(f)$ for any $i$. 
		This lets us produce new invariants from old, giving us the following.

		\begin{lem} \label{lem:steenrod-invariant}
			Let $K$ be a finite field, $S$ a polynomial ring over $K$, and $G$ a group acting on $S$ by degree-preserving $K$-algebra automorphisms. 
			If $f \in S^{G}$ is an invariant, then $\mathcal{P}^{i}(f) \in S^{G}$ for all $i \ge 0$. 
			\qed
		\end{lem}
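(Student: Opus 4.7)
The plan is to reduce the statement directly to the naturality of the total Steenrod operation $\mathcal{P}$ recorded just above the lemma. By hypothesis, each $\sigma \in G$ acts on $S$ as a degree-preserving $K$-algebra automorphism; call this map $\varphi_{\sigma} \colon S \to S$. Applying the right-hand naturality square with $R = S$ and $\varphi = \varphi_{\sigma}$ yields the identity $\varphi_{\sigma} \circ \mathcal{P}^{i} = \mathcal{P}^{i} \circ \varphi_{\sigma}$ of $K$-linear maps $S \to S$, for every $i \ge 0$. If $f \in S^{G}$, then for every $\sigma \in G$ we obtain
\[
\sigma \cdot \mathcal{P}^{i}(f) \;=\; \mathcal{P}^{i}(\sigma \cdot f) \;=\; \mathcal{P}^{i}(f),
\]
so $\mathcal{P}^{i}(f) \in S^{G}$, as claimed.

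There is no genuine obstacle: the lemma is essentially a restatement of the parenthetical remark ``if $\varphi \colon S \to S$ is a degree-preserving $K$-algebra automorphism and $f \in S$ is fixed by $\varphi$, then so is $\mathcal{P}^{i}(f)$'' made immediately before the lemma, ranging over all $\sigma \in G$. If one wished to unpack the naturality itself, the key observation is that a degree-preserving $K$-algebra homomorphism sends linear forms to linear forms, and that any $\mathbb{F}_{q}$-algebra map commutes with the $q$-th power Frobenius; together these ensure the coordinate-free formula $\mathcal{P}(x) = x + x^{q}T$ on linear elements is preserved under base change along $\varphi$, which implies the left-hand naturality square on generators, and then on all of $S$ since both composites are $K$-algebra homomorphisms $S \to S[T]$. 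Comparing coefficients of $T^{i}$ gives the naturality of each $\mathcal{P}^{i}$, and the argument above completes the proof.
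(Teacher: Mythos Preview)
Your proposal is correct and matches the paper's approach exactly: the paper gives no separate proof (note the inline \qedsymbol), since the lemma is just the naturality observation in the preceding paragraph applied to each $\sigma \in G$. Your unpacking of the naturality square is accurate and slightly more detailed than what the paper records.
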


\printbibliography
\end{document}